\newtheorem{thmA}{Theorem}
\title[Profinite rigidity, fibering, and the  figure-eight knot] 
{Profinite rigidity, fibering, and the  figure-eight knot} 
\author{M. R. Bridson}
\author{A. W. Reid}
\address{\newline Mathematical Institute, 
\newline Andrew Wiles Building,
\newline University of Oxford,
\newline Oxford OX2 6GG, UK}
\email{ bridson@maths.ox.ac.uk}
\address{\newline Department of Mathematics,
\newline University of Texas, 
\newline Austin, TX 78712, USA}
\email{ areid@math.utexas.edu}
\thanks{Bridson was supported in part by grants from the
EPSRC and a Royal Society Wolfson Merit Award. Reid was supported in part 
by an NSF grant}
\def\-{\overline}
\def\wh{\widehat}
\def\G{\Gamma}
\def\Ge{\Pi}
\def\ssm{\smallsetminus}
\def\K{{\mathcal{K}}}
\def\H{\Bbb H}
\def\Z{\Bbb Z}
\def\Q{\Bbb Q}
\def\R{\Bbb R}
\def\F{\Bbb F}
\def\Sh{\Bbb S}
\def\rk{{\rm{rk }\, }}
\def\ker{\rm{ker}}
\def\cd{{\rm{cd}}}
\def\PSL{\rm{PSL}}
\def\SL{\rm{SL}}
\def\GL{\rm{GL}}
\def\qed{ $\sqcup\!\!\!\!\sqcap$}
\def\tr{\mbox{\rm{tr}}\, }
\def\G{\Gamma}
\def\C{\mathcal{C}}
\def\<{\langle}
\def\>{\rangle}
\def\ilim{\varprojlim}
\def\ns{\vartriangleleft} 
\def\Gp{\Gamma_{\phi}}
\def\aut{{\rm{Aut}}}
\def\out{{\rm{Out}}}
\newtheorem{theorem}{Theorem}[section]
\newtheorem{lemma}[theorem]{Lemma}
\newtheorem{corollary}[theorem]{Corollary}
\newtheorem{proposition}[theorem]{Proposition}
\theoremstyle{definition} 
\newtheorem{remark}[theorem]{Remark}
\newtheorem{example}[theorem]{Example}
\begin{document}

\begin{abstract} We establish results concerning the
 profinite completions of 3-manifold groups. In particular, we prove that the complement of the
 figure-eight knot $\Sh^3\ssm\K$ is distinguished 
from all other compact 3-manifolds by the set of 
 finite quotients of its fundamental group. 
In addition, we show that if $M$ is a compact 3-manifold with $b_1(M)=1$,   
and $\pi_1(M)$ has the
 same finite quotients as a free-by-cyclic group $F_r\rtimes\Z$, then  $M$ has non-empty boundary,
fibres over the circle with compact fibre, 
and  $\pi_1(M)\cong F_r\rtimes_\psi\Z$ for some  
$\psi\in{\rm{Out}}(F_r)$.
\end{abstract}

\subjclass{20E18, 57M25, 20E26}

\keywords{3-manifold groups, profinite completion, figure-eight knot}

\maketitle

%%%%%%%%%%%%%%%%%%%%%%%%%%%%%%%%%%%%%%%%%%%%%%%%%%%%%
%
%
% Introduction
%
%
%%%%%%%%%%%%%%%%%%%%%%%%%%%%%%%%%%%%%%%%%%%%%%%%%%%%%

\section{Introduction}
We are interested in the extent to which  
the set of finite quotients of
a 3-manifold group $\Gamma=\pi_1(M)$ determines $\Gamma$ and $M$. 
In this article we shall prove that fibred 3-manifolds that have non-empty boundary and first betti number $1$
can be distinguished from
all other compact 3-manifolds by examining the finite quotients of $\pi_1(M)$. 
A case where our methods are particularly well suited
is when $M$ is the complement of the figure-eight knot. Throughout,
we denote this knot by
$\K$ and write
$\Ge=\pi_1(\Sh^3\smallsetminus \K)$. 

The manifold $\Sh^3\smallsetminus \K$
and the group $\Ge$ hold a special
place in the interplay of 3-manifold topology and hyperbolic geometry.
$\K$ became the first example of a {\em hyperbolic knot} when
Riley \cite{Ri} constructed a discrete faithful 
representation of $\Ge$ into
$\PSL(2,\Z[\omega])$,  where $\omega$ is a cube root of unity.
Subsequently, Thurston \cite{Th}  showed that the hyperbolic
structure on $\Sh^3\smallsetminus \K$ could be obtained by gluing two copies of the 
regular ideal tetrahedron from ${\H}^3$.  The insights gained from understanding the complement of the
figure-eight knot provided crucial
underpinning for many of Thurston's great discoveries concerning the geometry and topology of 3-manifolds,
such as the nature of geometric structures on manifolds obtained by Dehn surgery, and the question of when a surface bundle admits a hyperbolic structure.
In our exploration of the finite quotients of 3-manifold groups,  the figure-eight knot stands out once again as a special example.

Throughout this article, we allow manifolds to have non-empty boundary, but we assume that any spherical
boundary components have been removed by capping-off with a $3$-ball. 
We write
$\C(G)$ for the set of isomorphism classes of the finite quotients of a group $G$. 

\begin{thmA}\label{main}
Let $M$ be a compact connected 3-manifold. If $\C(\pi_1(M))=\C(\Ge)$, then $M$
is homeomorphic to $\Sh^3\ssm\K$.
\end{thmA}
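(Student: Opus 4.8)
The plan is to feed the hypothesis into the companion theorem on free-by-cyclic groups with $b_1=1$ (the second statement of the abstract), which hands us a fibration of $M$, and then to determine the monodromy completely from the first homology of the finite cyclic covers together with the arithmetic of $\GL(2,\Z)$. Since $H_1(\Ge)=\Z$ and $\C(\pi_1 M)=\C(\Ge)$ gives $\wh{\pi_1 M}\cong\wh\Ge$, we have $b_1(M)=1$; moreover $\Ge\cong F_2\rtimes_{\psi_0}\Z$, where $\psi_0$ is the monodromy of the fibration of $\Sh^3\ssm\K$ over the circle by once-punctured tori, an Anosov map whose action on $H_1$ has characteristic polynomial $x^2-3x+1$ (the Alexander polynomial of $\K$). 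So the companion theorem applies: $M$ has non-empty boundary, fibres over $S^1$ with compact fibre $\Sigma$, and $\pi_1 M\cong F_2\rtimes_\psi\Z$ for some $\psi\in\out(F_2)$. Since $\pi_1\Sigma$ is free of rank $2$, $\Sigma$ is one of $\Sigma_{1,1}$, $\Sigma_{0,3}$, $N_{1,2}$, $N_{2,1}$ (the once-punctured torus, the pair of pants, and the two non-orientable surfaces of Euler characteristic $-1$).

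Next I would use that the isomorphism $\wh{\pi_1 M}\cong\wh\Ge$ induces an isomorphism of abelianisations $\wh\Z\cong\wh\Z$, which is multiplication by a unit of $\wh\Z$ and hence matches the canonical quotients onto $\Z/n$; so it restricts to $\wh{\pi_1 M_n}\cong\wh{\Ge_n}$ on the $n$-fold cyclic covers. Passing to abelianisations and using that a finitely generated abelian group is recovered from its profinite completion, $H_1(M_n)\cong H_1(\Ge_n)$ for all $n\ge 1$; since $H_1(M)=\Z$ the bundle projection $\pi_1 M\epi\Z$ is the abelianisation map, so $\pi_1 M_n\cong F_2\rtimes_{\psi^n}\Z$, and writing $\psi_*\in\GL(2,\Z)$ for the action of $\psi$ on $H_1(\Sigma;\Z)=\Z^2$ the standard formula $H_1(\text{mapping torus of }\theta)\cong\Z\oplus\mathrm{coker}(\theta_*-I)$ turns this into
\[ \mathrm{coker}(\psi_*^n-I)\ \cong\ \mathrm{coker}(\psi_{0*}^n-I)\qquad(n\ge 1). \]
From $n=1$, $H_1(M)=\Z$ forces $\det(\psi_*-I)=\pm 1$. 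If $\det\psi_*=1$ this leaves $\tr\psi_*\in\{1,3\}$, and $\tr\psi_*=1$ is impossible since then $\psi_*$ has order $6$, making $\mathrm{coker}(\psi_*^6-I)=\Z^2$ infinite while $\mathrm{coker}(\psi_{0*}^6-I)$ is finite. If $\det\psi_*=-1$ then $\tr\psi_*=\pm 1$, so $\tr\psi_*^2=3$ and $\det(\psi_*^2-I)=-1$, whence $\mathrm{coker}(\psi_*^2-I)=0$; but the $2$-fold cyclic cover of $\Sh^3\ssm\K$ has $H_1=\Z\oplus\Z/5$, a contradiction. Hence $\psi_*$ has characteristic polynomial $x^2-3x+1$, and in particular $\psi_*$ is Anosov of infinite order.

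Finally I would identify $\Sigma$ and conclude. Because $\det(\psi_*-I)=-1$ is odd, the Wang sequence gives $H_1(M;\Z/2)\cong\Z/2$, carried by the base circle, and $H^1(M;\Z/2)=\Hom(\pi_1 M,\Z/2)$ is a profinite invariant equal to $H^1(\Ge;\Z/2)=\Z/2$. If $M$ were non-orientable, its orientation class would be the unique non-trivial homomorphism $\pi_1 M\to\Z/2$, which vanishes on $\pi_1\Sigma$; but a regular neighbourhood of the fibre is $\Sigma\times(-1,1)$, so this would force $\Sigma$ orientable and then make $M$ non-orientable only if $\det\psi_*=-1$, contradicting the previous step. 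So $M$, hence $\Sigma$, is orientable, leaving $\Sigma\in\{\Sigma_{1,1},\Sigma_{0,3}\}$; and $\Sigma_{0,3}$ is excluded because its mapping class group is finite, whereas $\psi$ induces the infinite-order Anosov $\psi_*$. Thus $\Sigma=\Sigma_{1,1}$, and under the Nielsen isomorphism $\mathrm{MCG}^{\pm}(\Sigma_{1,1})\cong\out(F_2)\cong\GL(2,\Z)$ we have $\psi\leftrightarrow\psi_*$, while $\Sh^3\ssm\K$ is the mapping torus of $\psi_0\leftrightarrow\psi_{0*}$. By the Latimer--MacDuffee correspondence the $\GL(2,\Z)$-conjugacy classes with characteristic polynomial $x^2-3x+1$ correspond to the ideal classes of $\Z[x]/(x^2-3x+1)=\Z[\frac{1+\sqrt 5}{2}]=\mathcal{O}_{\Q(\sqrt 5)}$, of which there is exactly one since $\Q(\sqrt 5)$ has class number one. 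Hence $\psi_*$ is conjugate to $\psi_{0*}$ in $\GL(2,\Z)$, so $\psi$ is conjugate to $\psi_0$ in $\mathrm{MCG}^{\pm}(\Sigma_{1,1})$, and mapping tori of conjugate mapping classes are homeomorphic; therefore $M$ is homeomorphic to $\Sh^3\ssm\K$.

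The step I expect to be the real obstacle is not in the chain above but in the companion free-by-cyclic theorem that is being invoked, namely producing a fibration of $M$ with compact fibre from purely profinite information; this is where the genuinely new 3-manifold input lies (detecting fiberedness through the finite-quotient shadow of the Thurston norm). Within the argument sketched here, the delicate points are checking that the profinite isomorphism genuinely transports the cyclic covers together with their integral first homology, and then upgrading the resulting coincidence of trace sequences to an equality of $\GL(2,\Z)$-conjugacy classes --- which is exactly where the characteristic polynomial $x^2-3x+1$ and the class-number-one property of $\Q(\sqrt 5)$ are used.
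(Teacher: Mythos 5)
Your proposal is correct, and at the top level it follows the same route as the paper: invoke Theorem~\ref{t:fibres} to conclude that $M$ fibres with $\pi_1(M)\cong F_2\rtimes_\psi\Z$, pin down the class of $\psi$ in ${\rm{GL}}(2,\Z)$, and then identify the fibre as the once-punctured torus. The differences are in the sub-steps, and they are worth recording. Where the paper determines $\det\psi$ by looking at the action of $\wh\Z$ on the mod-$3$ homology of the fibre kernel (Lemma~\ref{l:det}) and gets the trace from $H_1$ via Lemma~\ref{l:torsion}, you rule out $\det\psi_*=-1$ using $H_1$ of the double cyclic cover (which is $\Z\oplus\Z/5$ on the figure-eight side); your transport of the cyclic covers and their integral homology through the profinite isomorphism is exactly the content of Lemma~\ref{l:L1} together with Lemma~\ref{l:abel}, so both arguments are of the same ``homology of finite covers'' nature. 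For the uniqueness of the ${\rm{GL}}(2,\Z)$-conjugacy class with trace $3$ and determinant $1$, the paper does a hand computation, while you invoke Latimer--MacDuffee and the fact that $\Z[x]/(x^2-3x+1)$ is the full ring of integers of $\Q(\sqrt 5)$, of class number one; this is precisely the viewpoint flagged in Remark~\ref{r:cft}, so it buys a cleaner and more general justification of the same fact. For the fibre identification, the paper asserts in one line that the once-punctured torus is the only compact surface with $\pi_1\cong F_2$ supporting a hyperbolic automorphism, whereas you argue case-by-case via $w_1$ and mapping class groups; this is a useful filling-in, with one small wobble: in the branch ``$\Sigma$ orientable, $M$ non-orientable'' the implication ``orientation-reversing monodromy $\Rightarrow\det\psi_*=-1$'' is valid for $\Sigma_{1,1}$ but not for $\Sigma_{0,3}$ (a reflection of the pair of pants acts by $-I$), so that sub-case should instead be killed by the finite mapping class group of $\Sigma_{0,3}$ --- which you already invoke a sentence later --- and likewise the non-orientable fibres are correctly excluded by your observation that every homomorphism $\pi_1(M)\to\Z/2$ vanishes on $\pi_1\Sigma$ because $\det(\psi_*-I)$ is odd. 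So no genuine gap: a slight reordering of that final paragraph gives a complete argument matching the paper's conclusion.
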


\begin{corollary}\label{knots}
Let $J\subset \Sh^3$ be a knot and let $\Lambda=\pi_1(S^3\smallsetminus J)$. If 
$\C(\Lambda)=\C(\Ge)$, then $J$ is isotopic to   $\K$.
\end{corollary}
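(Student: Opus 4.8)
The plan is to deduce this immediately from Theorem~\ref{main}, once the statement has been recast in terms of compact $3$-manifolds. Let $M$ denote the exterior of $J$, i.e. the compact connected $3$-manifold obtained by deleting an open tubular neighbourhood of $J$ from $\Sh^3$; its boundary is a single torus (in particular there is nothing to cap off, so $M$ is already in the normal form fixed in the introduction), and $\pi_1(M)\cong\Lambda$. By hypothesis $\C(\pi_1(M))=\C(\Lambda)=\C(\Ge)$, so Theorem~\ref{main} applies and yields a homeomorphism between $M$ and $\Sh^3\ssm\K$, that is, between the exterior of $J$ and the exterior of the figure-eight knot.

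Next I would upgrade this homeomorphism of exteriors to an isotopy of knots. Here the input is the theorem of Gordon and Luecke that a knot in $\Sh^3$ is determined up to equivalence by its complement: since $\Sh^3\ssm J$ and $\Sh^3\ssm\K$ are homeomorphic, there is a self-homeomorphism of $\Sh^3$ carrying $J$ to $\K$. Thus $J$ is equivalent to $\K$, possibly only after passing to a mirror image; but the figure-eight knot is amphichiral, so this caveat is vacuous and $J$ is isotopic to $\K$, as claimed.

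In this argument essentially all of the work has already been done in Theorem~\ref{main}, so I do not expect a genuine obstacle. The only point worth flagging is a conceptual one: the hypothesis $\C(\Lambda)=\C(\Ge)$ is a statement purely about the knot group, and knot groups do \emph{not} in general determine their knots, so one cannot short-circuit the argument — it is crucial that Theorem~\ref{main} delivers an honest homeomorphism of $3$-manifolds before the Gordon--Luecke theorem can be brought to bear. Beyond that, one should simply check the routine compatibility of conventions (compact exterior versus open complement, torus boundary, choice of basepoint) between the statement of the corollary and that of Theorem~\ref{main}, none of which causes any difficulty.
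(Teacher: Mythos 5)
Your argument is correct and is exactly the intended deduction: apply Theorem~\ref{main} to the knot exterior and then invoke the Gordon--Luecke knot complement theorem, with amphichirality of the figure-eight knot disposing of the mirror-image ambiguity. The paper treats this as an immediate consequence of Theorem~\ref{main} in just this way, so there is nothing to add.
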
 

It is useful to arrange
the finite quotients of a group  $\G$ into a directed system, and to replace $\C(\G)$ by the {\emph{profinite
completion} $\wh{\G}$:
the normal subgroups 
of finite index $N\ns\G$ are ordered by reverse inclusion and  
$
\wh{\G} := \ilim \G/N.$ 
A standard argument shows that for finitely
generated groups, $\C(\G_1)=\C(\G_2)$ if and only if $\wh{\G}_1\cong \wh{\G}_2$
 (see \cite{DFPR}). Henceforth, we shall express our results in terms of $\wh{\G}$ rather
 than $\C(\G)$. 

$\Sh^3\smallsetminus \K$ has the structure of a once-punctured torus
bundle over the circle (see \S 3), and hence $\Ge$ is an extension
of a free group of rank $2$ by $\Z$. We shall use Theorem \ref{main} to frame a broader investigation
into 3-manifolds whose fundamental groups have the same profinite completion as a  free-by-cyclic group.
To that end, we fix some notation.

We write $F_r$ to denote the free group of rank $r\geq 2$ 
and $b_1(X)$ to denote the first betti number $\rk {\rm{H}}^1(X,\Z)$,
where $X$ is a space or a finitely generated group. Note that for knot complements $b_1(X)=1$. 
Stallings' Fibering Theorem \cite{stall} 
shows that a compact 3-manifold with non-empty boundary 
fibres over the circle, with compact fibre,
if and only if $\pi_1N$ is of the form $F_r\rtimes\Z$. 

In Section \ref{s:fibres} we shall elucidate the nature of compact
3-manifolds whose fundamental groups have the same profinite
completion as a group of the form $F_r\rtimes\Z$. In particular we
shall prove:

\begin{thmA}\label{t:fibres} Let $M$ be a compact connected 3-manifold and let  
$\G=F_r\rtimes_\phi\Z$.
If $b_1(M)=1$ and $\wh{\G}\cong\wh{\pi_1(M)}$, then $M$ has non-empty boundary,
fibres over the circle with compact fibre,  
and  $\pi_1(M)\cong F_r\rtimes_\psi\Z$  for some $\psi\in{\rm{Out}}(F_r)$.
\end{thmA}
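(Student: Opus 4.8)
To prove Theorem~\ref{t:fibres} I would proceed as follows. Write $\G=F_r\rtimes_\phi\Z$, and begin with two preliminary observations about $\wh{\G}$. Since $F_r$ is finitely generated, every finite‑index subgroup of $F_r$ contains a $\phi$‑invariant one of finite index, so the profinite topology of $F_r\rtimes_\phi\Z$ induces the full profinite topology on $F_r$; hence $\wh{\G}\cong\wh{F_r}\rtimes_{\wh\phi}\wh{\Z}$. In particular $\wh{\G}$ is torsion‑free (an extension of torsion‑free profinite groups), and the kernel of the canonical map $\wh{\G}\epi\wh{\Z}$ is the pro‑free group $\wh{F_r}$. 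Secondly, comparing abelianisations and using that finitely generated abelian groups are determined by their profinite completions, $\wh{\G}\cong\wh{\pi_1(M)}$ forces $H_1(\G;\Z)\cong H_1(M;\Z)$, so $b_1(\G)=1$ as well; equivalently $1$ is not an eigenvalue of the action of $\phi$ on $H_1(F_r;\Z)$.

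\emph{Pinning down $M$.} Being free‑by‑cyclic, $\G$ has cohomological dimension $2$ and is cohomologically good \cite{GJZZ}, so $\cd_p\wh{\G}\le 2$ for every prime $p$, whence $\cd_p\wh{\pi_1(M)}\le 2$. Since $\wh{\pi_1(M)}\cong\wh{\G}$ is torsion‑free, $\pi_1(M)$ is torsion‑free, so $M$ has no spherical prime summand with finite fundamental group. Since $\G$ is one‑ended (it has an infinite, infinitely‑indexed, finitely generated normal subgroup and is not virtually cyclic), the profinite recognition of connected‑sum decompositions for $3$‑manifold groups \cite{WZ} shows that $\wh{\pi_1(M)}$ is not a non‑trivial profinite free product, so $M$ is prime; as $M$ is neither $S^1\times S^2$ (because $\wh{\Z}\not\cong\wh{\G}$) nor a space form, $M$ is aspherical. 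Finally $M$ cannot be closed, for a closed aspherical $3$‑manifold has $\pi_1$ a $\mathrm{PD}_3$‑group, and goodness then gives $H^3(\wh{\pi_1(M)};\F_2)\neq 0$, i.e. $\cd_2\wh{\pi_1(M)}=3$, contradicting the bound above. Hence $M$ is a compact, irreducible, aspherical $3$‑manifold with non‑empty boundary.

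\emph{The fibring class and the conclusion.} As $b_1(M)=1$ and $\wh{\Z}$ is torsion‑free, $\Hom(\wh{\pi_1(M)},\wh{\Z})\cong\Hom(H_1(M;\Z),\wh{\Z})\cong\wh{\Z}$, so every continuous epimorphism $\wh{\pi_1(M)}\epi\wh{\Z}$ has the same kernel; choosing a generator $\alpha\colon\pi_1(M)\epi\Z$ of $H^1(M;\Z)$, this common kernel is the closure $\overline{\ker\alpha}$. The same applies to $\G$, where the common kernel is $\wh{F_r}$ by the first paragraph, so transporting along $\wh{\pi_1(M)}\cong\wh{\G}$ identifies $\overline{\ker\alpha}$ with $\wh{F_r}$; in particular it is topologically finitely generated. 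Now I invoke the theorem of Jaikin‑Zapirain \cite{JZ} --- built on the twisted‑Alexander‑polynomial characterisation of fibred classes due to Friedl--Vidussi \cite{FV} --- that, for a compact $3$‑manifold, fibring with respect to a class in $H^1$ is a profinite invariant; this forces $\ker\alpha$ itself (not merely its closure) to be finitely generated. Since $M$ is a compact $3$‑manifold with non‑empty boundary, Stallings' Fibering Theorem \cite{stall} then shows $M$ fibres over the circle with compact fibre $S$ and $\pi_1(M)\cong\pi_1(S)\rtimes\Z$; as $\partial M\ne\emptyset$, the fibre $S$ has non‑empty boundary, so $\pi_1(S)$ is a free group $F_s$. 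Comparing once more the common kernel of the maps onto $\wh{\Z}$, namely $\wh{F_s}\cong\overline{\ker\alpha}\cong\wh{F_r}$, gives $s=r$, and hence $\pi_1(M)\cong F_r\rtimes_\psi\Z$ for some $\psi\in\out(F_r)$.

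\emph{Where the difficulty lies.} The crux is the last step: a finitely generated profinite group can perfectly well be the closure of a subgroup that is not finitely generated --- this already occurs inside $\wh{F_2}$ --- so the implication ``$\overline{\ker\alpha}$ topologically finitely generated $\Rightarrow$ $\ker\alpha$ finitely generated'' genuinely needs the $3$‑manifold input, namely the profinite invariance of fibring, and there is no elementary substitute. A secondary, still non‑trivial, point is the elimination of the closed and reducible cases for $M$ purely from profinite data, which rests on cohomological goodness of compact $3$‑manifold groups and on the profinite detection of sphere decompositions.
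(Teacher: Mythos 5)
Your reduction steps are sound and run broadly parallel to the paper's: exploiting $b_1(M)=1$ to see that all continuous epimorphisms $\wh{\pi_1(M)}\to\wh{\Z}$ have a common kernel (this is exactly how the paper gets from Theorem \ref{t:fibres} to the hypotheses of Theorem \ref{t:realB}), using goodness and $\cd(\wh{F_r\rtimes\Z})=2$ to rule out closed $M$, and recovering the rank of the fibre from $\wh{F_s}\cong\wh{F_r}$ (cf.\ Lemma \ref{l:sameHat}). The crux, however, is handled completely differently. You outsource fibredness to the profinite recognition theorem of Jaikin-Zapirain (via Friedl--Vidussi's twisted Alexander polynomial criterion); that theorem postdates this paper and is precisely the kind of general statement of which the paper's Theorem \ref{tt:freedmen} (and the parallel work of Boileau--Friedl \cite{BF}, whose methods are the closer analogue of your route) is a forerunner. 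The paper instead proves its own criterion: the closure of $\ker f$ has cohomological dimension $\le 1$ because it lies in $\ker(\wh{\G}\to\wh{\Z})\cong\wh{F}$, and Theorem \ref{t:freedmen} shows that if $\ker f$ were not finitely generated free its closure would have $\cd\ge 2$ --- proved via the Freedman--Freedman dichotomy for the infinite cyclic cover \cite{FF}, surface subgroup separability in the strong form needed (Przytycki--Wise \cite{PW2} upgraded by Liu's aspirality criterion \cite{Liu}), and controlled Dehn filling using \cite{CGLS} and \cite{GL}. So your argument is a legitimate modern shortcut, but it replaces the technical heart of the paper by a strictly stronger black box; note also that Jaikin-Zapirain's theorem is stated for orientable manifolds, whereas Theorem \ref{t:fibres} allows non-orientable $M$ (cf.\ the Gieseking manifold), so a word about passing to the orientation double cover is needed.

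There is also a reparable inaccuracy in your ``pinning down $M$'' step. The connected-sum detection you attribute to \cite{WZ} is not in that paper (it appears only in later work of Wilton--Zalesskii), and even granting it you would still need to show that $\wh{F_r\rtimes\Z}$ admits no non-trivial profinite free product decomposition: one-endedness of the discrete group $\G$ is not by itself an argument about $\wh{\G}$. The paper avoids this entirely with $L^2$-Betti numbers: $b_1^{(2)}(\G)=0$ by Gaboriau \cite{gab}, transferred to $\pi_1(M)$ by the density result of \cite{BCR}, and then Lott--L\"uck \cite{LL} yields at one stroke that $M$ is aspherical (hence irreducible) \emph{and} that $\partial M$ is a non-empty union of tori and Klein bottles, which are then seen to be incompressible --- exactly the boundary structure that the paper's fibering criterion (via Freedman--Freedman) requires and that your argument never establishes. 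This is harmless on your route, but it means your proof cannot be repaired by substituting Theorem \ref{tt:freedmen} for Jaikin-Zapirain without redoing that step. Patch the prime/aspherical step with the $L^2$ argument and address orientability, and your proposal stands as a correct, though heavily outsourced, alternative to the paper's proof.
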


\begin{corollary}
\label{profinite_invt_fibered}
Let $M$ and $N$ be compact connected 3-manifolds with $\wh{\pi_1(N)}\cong\wh{\pi_1(M)}$.
If $M$ has non-empty incompressible boundary and fibres over the circle, and $b_1(M)=1$, then $N$ 
has non-empty incompressible
boundary and fibres over the circle, and $b_1(N)=1$.\end{corollary}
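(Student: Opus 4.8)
The plan is to deduce Corollary \ref{profinite_invt_fibered} from Theorem \ref{t:fibres} together with the observation that all of the hypotheses on $M$ are controlled by the profinite completion, and to run the argument symmetrically in $M$ and $N$.

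\medskip

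\noindent\emph{Step 1: Transfer the hypotheses from $M$ to $N$ via the profinite completion.} First I would recall that $b_1(X)$ for a finitely generated group $X$ equals the $\mathbb{Z}$-rank of $H_1(\wh{X},\mathbb{Z})$ in the appropriate sense; more concretely, $b_1(\pi_1(M))$ is detected by the finite quotients of $\pi_1(M)$ (e.g.\ via the largest $r$ such that $(\mathbb{Z}/n)^r$ is a quotient for all $n$), so $\wh{\pi_1(N)}\cong\wh{\pi_1(M)}$ forces $b_1(N)=b_1(M)=1$. Next, since $M$ has non-empty incompressible boundary and fibres over the circle with $b_1(M)=1$, Stallings' Fibering Theorem gives $\pi_1(M)\cong F_r\rtimes_\phi\mathbb{Z}$ for some $r\ge 2$ (note $r\ge 2$ because an incompressible boundary component is not a sphere, and $r=1$ would make $M$ a solid-torus-like case with compressible boundary, which I should rule out carefully; alternatively $r=0,1$ are excluded because they would change $b_1$ or force non-hyperbolic behaviour incompatible with incompressible boundary). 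So $\wh{\pi_1(N)}\cong\wh{F_r\rtimes_\phi\mathbb{Z}}$, and Theorem \ref{t:fibres} applied with this $\mathbb{Z}$ yields that $N$ has non-empty boundary, fibres over the circle with compact fibre, and $\pi_1(N)\cong F_r\rtimes_\psi\mathbb{Z}$.

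\medskip

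\noindent\emph{Step 2: Upgrade ``non-empty boundary'' to ``non-empty incompressible boundary''.} This is the step I expect to be the main obstacle, since Theorem \ref{t:fibres} only delivers a fibration with compact fibre, and a priori the fibre could be a disc or the boundary could be compressible. However, from $\pi_1(N)\cong F_r\rtimes_\psi\mathbb{Z}$ with $r\ge 2$, the fibre surface $S$ has $\pi_1(S)\cong F_r$, hence $S$ is not a disc and not a sphere, so $S$ has non-empty boundary and $\partial S\hookrightarrow S$ is $\pi_1$-injective. Since $N$ fibres over the circle with fibre $S$, the boundary $\partial N$ is a union of tori and Klein bottles fibred by $\partial S$, and the inclusion $\partial N\hookrightarrow N$ is $\pi_1$-injective because on each boundary component it factors as (boundary of fibre, which injects into $F_r$) followed by (the mapping torus inclusion, which injects since $F_r\rtimes_\psi\mathbb{Z}$ contains $F_r$). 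Thus $\partial N$ is incompressible, and $b_1(N)=1$ has already been established in Step 1, completing the proof.

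\medskip

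\noindent I would write this out in a few lines, emphasising only the $b_1$-detection argument and the incompressibility argument, since the rest is an immediate citation of Theorem \ref{t:fibres} and Stallings. The one genuine subtlety to flag is ensuring $r\ge 2$ (equivalently, excluding the fibre being a disc or annulus) so that the profinite completion input to Theorem \ref{t:fibres} is of the required form $F_r\rtimes\mathbb{Z}$ with $r\ge 2$; this follows from incompressibility of $\partial M$ together with $M$ being a genuine $3$-manifold fibration, but it deserves an explicit sentence.
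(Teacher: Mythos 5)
Your proposal is correct and follows what is essentially the paper's (implicit) derivation: transfer $b_1$ via Lemma \ref{l:abel}, use Stallings to write $\pi_1(M)\cong F_r\rtimes\Z$, and apply Theorem \ref{t:fibres} to $N$; for the incompressibility of $\partial N$ the paper would simply quote Corollary \ref{c:notClosed} (and the non-orientable remark following Proposition \ref{halfliveshalfdies}), which the proof of Theorem \ref{t:fibres} establishes en route, whereas you re-derive it from the fibred structure of $N$ --- a harmless variation, and your extension argument (boundary circle of the fibre injects into $F_r$, degree-$d$ map on the $\Z$ quotient) is fine. The one place where your write-up goes wrong is the parenthetical justification that $r\ge 2$: it is not true that $r=1$ forces compressible boundary or is incompatible with the hypotheses --- the orientable twisted $I$-bundle over the Klein bottle fibres over the circle with annulus fibre, has $b_1=1$ and incompressible torus boundary, and has $\pi_1\cong\Z\rtimes\Z$. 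So the hypotheses of the corollary do not by themselves exclude fibre an annulus or M\"obius band; this marginal case (which the paper's convention $r\ge 2$ in Theorem \ref{t:fibres} also leaves implicit) should either be treated separately or dispatched by observing that the argument of Theorems \ref{t:realB} and \ref{t:freedmen} is stated for an arbitrary finitely generated free $F$, so it covers $F\cong\Z$ as well; the cases $r=0$ (disc or sphere fibre) are genuinely excluded by non-empty incompressible boundary, as you say.
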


One can remove the hypothesis $b_1(M)=1$  from Theorem \ref{t:fibres} at the expense of demanding more from the
isomorphism $\wh{\pi_1M}\to\wh{F_r\rtimes\Z}$: it is enough to require that $\pi_1M$ has cyclic image under the composition
$$
\pi_1M\to\wh{\pi_1M}\to\wh{F_r\rtimes\Z}\to\wh{\Z},
$$
or else that the isomorphism $\wh{\pi_1M}\to\wh{F_r\rtimes\Z}$ is regular in the sense of \cite{BF}; see remark \ref{b_1need}. 
Corollary \ref{profinite_invt_fibered} can be strengthened in the same manner.

We shall pay particular attention to the case $r=2$. In that setting
we prove that for each $\G$ with
$b_1(\G)=1$,   there are only finitely many possibilities for $M$,
and either they are all hyperbolic or else none of them are.
(We refer the reader to Theorem \ref{t:compile} at the end of the paper
for  a compilation of related results.) These
considerations apply to the complement of the figure-eight knot,
because it is a once-punctured torus bundle with holonomy
$$
\phi =  \begin{pmatrix} 2& 1\cr 1& 1\cr\end{pmatrix} , 
$$
whence $\Ge\cong F_2\rtimes_\phi\Z$. 
In this case, a refinement of the argument which shows that there are only finitely many
possibilities for $M$ shows that in fact there is a unique possibility. 
Similar arguments show that the  trefoil knot complement and the Gieseking manifold are
uniquely defined by the finite quotients of their fundamental groups 
(see \S 3 and \S 6 for details).

The most difficult step in the proof of Theorem \ref{t:fibres} is achieved by appealing to the 
cohomological criterion provided by the following theorem. 

\begin{thmA}\label{tt:freedmen}
Let $M$ be a compact, irreducible, 3-manifold with non-empty boundary 
which is a non-empty union of incompressible tori and Klein bottles. 
Let $f:\pi_1(M)\to\Z$ be an epimorphism. 
Then, either $\ker~f$ is finitely generated and free (and $M$ is fibred) or else
the closure of $\ker~f$ in $\wh{\pi_1M}$ has cohomological dimension at least $2$.
\end{thmA}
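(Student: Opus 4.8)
The plan is to argue by dichotomy on whether $\ker f$ is finitely generated. Suppose first that $\ker f$ is finitely generated. Since $M$ is irreducible with incompressible toral/Klein-bottle boundary, it is aspherical and $\pi_1(M)$ is a (torsion-free) $3$-manifold group; by Stallings' Fibering Theorem \cite{stall}, a finitely generated kernel of an epimorphism $\pi_1(M)\epi\Z$ forces $M$ to fibre over the circle with compact fibre $S$, so $\ker f=\pi_1(S)$. The fibre $S$ is a compact surface with non-empty boundary (the boundary tori and Klein bottles of $M$ meet each fibre in their boundary circles), hence $\pi_1(S)$ is free of finite rank. This disposes of the first alternative.

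The substance is the second alternative: assume $\ker f$ is \emph{not} finitely generated; I must show that the closure $K=\overline{\ker f}$ in $\wh{\pi_1M}$ has $\cd K\geq 2$. The key observation is that $K$ is exactly the kernel of the induced map $\wh f:\wh{\pi_1M}\to\wh\Z$ (one uses that $\pi_1M$, being a $3$-manifold group of this type, is good in the sense of Serre and $\Z$-subgroup-separable enough for the sequence $1\to\overline{\ker f}\to\wh{\pi_1M}\to\wh\Z\to 1$ to be exact — this follows from the LERF/goodness technology available for such $M$, e.g.\ via Agol--Wise, or more elementarily from the fact that $\ker f$ is itself good and the profinite topology on $\pi_1M$ induces the full profinite topology on $\ker f$). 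Granting exactness, $\wh{\pi_1M}$ is an extension of $\wh\Z$ by $K$, and since $\wh{\pi_1M}$ has cohomological dimension $3$ (goodness of the aspherical $3$-manifold group $\pi_1M$, whose $\cd$ is $3$ because $\partial M\neq\emptyset$ forces $H^3(\pi_1M;\Z)$ to vanish but $H^2(\pi_1M;M_b)\ne 0$ — here one invokes Poincaré--Lefschetz duality for the pair $(M,\partial M)$ to see $\cd\pi_1M = 2$ actually; I should be careful), a spectral-sequence / dimension-counting argument bounds $\cd K$ from below. Concretely: if $\cd K\leq 1$ then by a theorem on extensions $\cd\wh{\pi_1M}\leq \cd\wh\Z+\cd K\leq 1+1=2$, and one wants a contradiction with a \emph{lower} bound $\cd\wh{\pi_1M}\geq 2$ coming from duality; but this alone is not enough, so the real point must be that $\cd K\leq 1$ would force $K$ to be (pro-)free, and a pro-free normal closed subgroup $K$ with $\wh{\pi_1M}/K\cong\wh\Z$ would make $\pi_1M$ itself virtually free-by-cyclic with \emph{finitely generated} kernel after passing to a finite-index subgroup, contradicting that $\ker f$ is infinitely generated. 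This reduction — from "$\cd\overline{\ker f}\le 1$" to "$\ker f$ finitely generated" — is where I expect to spend the most effort, and it is plausibly the step the authors isolate as the hard one; it should hinge on the fact that an infinitely generated free group has a non-free (indeed infinitely generated) profinite completion whose closed subgroups detect $2$-dimensional cohomology, combined with the structure of $K$ as an inverse limit of the finite quotients $\ker f / (\ker f \cap N)$.

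An alternative, and perhaps cleaner, route is topological and avoids the reduction above: realize the second alternative via the \emph{infinite cyclic cover} $\widetilde M\to M$ corresponding to $f$. When $\ker f$ is infinitely generated, the Milnor--Turaev--type analysis of the Alexander module shows $H_1(\widetilde M;\Q)$ is infinite-dimensional, equivalently the Alexander polynomial is not monic / the $\Z[t^{\pm1}]$-module $H_1(\ker f)$ is not finitely generated over $\Z$. One then feeds this into the profinite setting: a result comparing the completed Alexander module with $H^1$ of the closed subgroup $\overline{\ker f}$ (using goodness to identify $H^*(\overline{\ker f};\widehat\Z)$ with the pro-$\widehat\Z$ completion of $H^*(\ker f;\Z)$) yields that $H^2(\overline{\ker f};A)\ne 0$ for a suitable finite module $A$, hence $\cd\overline{\ker f}\ge 2$. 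Whichever route is taken, the two indispensable inputs are (i) goodness of $\pi_1M$ (so that profinite cohomological dimension is controlled by ordinary cohomological dimension and the exact sequence for $\wh f$ holds), and (ii) the dichotomy, classical for $3$-manifolds, between "$\ker f$ finitely generated (fibred case, free kernel)" and "$\ker f$ infinitely generated (Alexander module infinite over $\Z$)". The main obstacle is packaging (i) — establishing the exact sequence $1\to\overline{\ker f}\to\wh{\pi_1M}\to\wh\Z\to1$ and the compatibility of completion with the relevant cohomology — cleanly enough that the dimension count goes through; the rest is bookkeeping with the Lyndon--Hochschild--Serre spectral sequence.
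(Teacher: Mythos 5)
Your first alternative is fine (finitely generated kernel plus Stallings gives the fibration and a free kernel), and your observation that $\overline{\ker f}=\ker(\wh{\pi_1M}\to\wh\Z)$ is correct (it is a standard right-exactness property of profinite completion and needs neither goodness nor LERF). But the second alternative --- the entire content of the theorem --- is not proved in your proposal. Your first route hinges on the implication ``$\cd(\overline{\ker f})\le 1$ forces $\overline{\ker f}$ to be projective/pro-free, and this forces $\ker f$ to be finitely generated,'' and you give no argument for the last step; there is no known way to pass from a cohomological statement about the closure back to finite generation of the discrete kernel by pure algebra, and indeed that passage is exactly what the theorem is asserting, so you are assuming what is to be proved. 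Your second route has the same defect in different clothing: goodness of $\pi_1M$ gives no comparison between $H^*(\ker f;-)$ and the continuous cohomology of $\overline{\ker f}$ when $\ker f$ is infinitely generated (one cannot even identify $\overline{\ker f}$ with $\wh{\ker f}$ in that situation), so the claimed nonvanishing of $H^2(\overline{\ker f};A)$ from the Alexander module is unsupported. A priori the closure of an infinitely generated kernel could still be a projective profinite group of cohomological dimension one; ruling that out is the hard point.

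The paper's proof is structured quite differently: instead of trying to compute $\cd(\overline{\ker f})$ abstractly, it exhibits an explicit closed subgroup of cohomological dimension $2$ inside the closure. If $f$ kills the fundamental group of some boundary torus or Klein bottle, one takes a $\Z^2$ there; Hamilton's separability result shows $\pi_1M$ induces the full profinite topology on it, so $\overline{\ker f}$ contains $\wh{\Z}^2$ and Proposition \ref{cdbound} gives $\cd\ge 2$. Otherwise the Freedman--Freedman theorem applies to the infinite cyclic cover $M^f$: either $M^f$ is a compact surface times $\R$ (the fibred case), or $M^f$ contains a closed $2$-sided embedded incompressible surface $S$ with $\pi_1S\subset\ker f$. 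One then pushes $S$ into a finite cyclic cover $N$ of $M$ and proves that $\pi_1M$ induces the full profinite topology on $\pi_1S$; this is the technical heart, carried out by Dehn filling $N$ to a closed irreducible manifold keeping $S$ incompressible (Cyclic Surgery Theorem and Gordon--Luecke) and then invoking the Przytycki--Wise/Liu separability and aspirality technology to lift arbitrary finite covers of $S$ to finite covers of the filled manifold. The conclusion is that $\overline{\pi_1S}\cong\wh{\pi_1S}$ has cohomological dimension $2$ and sits inside $\overline{\ker f}$. None of this machinery (or any substitute for it) appears in your proposal, so as it stands the proposal does not establish the theorem.
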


Our proof of this theorem  relies on the breakthroughs of Agol \cite{Ag} and Wise \cite{Wi} 
concerning cubical
structures on 3-manifolds and the subsequent work of Przytycki-Wise \cite{PW} 
on the separability of embedded surface groups, as refined by Liu \cite{Liu}. We isolate from this body of work 
a technical result of independent interest: {\em{if $M$ is a closed 3-manifold and $S\subset M$
is a closed embedded incompressible surface, then the closure of $\pi_1S$ in $\wh{\pi_1M}$ is isomorphic to $\wh{\pi_1 S}$.}}
The surfaces to which we apply this result are 
obtained using a construction 
of Freedman and Freedman \cite{FF}.
Our argument also  exploits the calculation of $L_2$-betti numbers
by Lott and L\"uck \cite{LL}, and the Cyclic Surgery Theorem \cite{CGLS}. Results concerning the goodness and
cohomological dimension of profinite completions also play an important role in the proof of Theorem B.

Our results here, and similar results in \cite{BF},  are the first to give credence to the
possibility, raised in \cite{Ag2} and \cite{Re}, that Kleinian groups of 
finite co-volume might be distinguished from each other and from
all other 3-manifold groups by their profinite completions, i.e.~that they are {\em profinitely rigid}
in the sense of \cite{Re}.
In contrast, the fundamental groups of 3-manifolds modelled on the geometries {\rm{Sol}}
and $\H^2\times\R$ are not profinitely rigid in general; 
see \cite{Fu}, \cite{He2} and  Remark \ref{r:torus}. 
There are also lattices in higher-dimensional semi-simple Lie groups that
are not determined by their profinite completions 
(see \cite{Ak}).
 We refer the reader to \cite{Re} \S  10 for a wider
discussion of profinite rigidity for 3-manifold groups, and note that 
the recent work of Wilton and Zalesskii \cite{WZ} provides an important advance in this area.

Theorem \ref{t:compile} prompts the question: 
{\em to what extent are free-by-cyclic groups profinitely rigid?}
We shall return to this question in a future article (cf.~Remark \ref{r:torus}). 
Note that one cannot hope to distinguish a free-by-cyclic group from an arbitrary finitely presented,
residually finite group by means of its finite quotients without first resolving the deep question of whether finitely generated free groups themselves
can be distinguished.\\[\baselineskip]
\noindent{\bf Remark:}~During the writing of this paper we became aware
that M. Boileau and S. Friedl were working on similar problems \cite{BF}.  Some of their results
overlap with ours, but the methods of proof are very different.\\[\baselineskip]
\noindent{\bf Acknowledgement:}~We thank Yi Liu for helpful
conversations and correspondence, and Cameron Gordon for Dehn surgery
advice. We would also like to thank Stefan Friedl for helpful comments
concerning the issue in Remark 5.2.

\section{Preliminaries concerning profinite groups}

In this section we gather the results about profinite groups that we shall need.

\subsection{Basics} Let $\G$ be a finitely generated group. We have
$$
\wh{\G} = \ilim \G/N
$$
where the inverse limit is taken over the normal
subgroups of finite index $N\ns\G$ ordered
by reverse inclusion. $\wh{\G}$ is a compact topological group.

 The natural  homomorphism $i:\G\to\widehat\G$ is injective if and only if
$\G$ is residually finite.
The image of $i$  is dense regardless of whether $\G$ is residually
finite or not, so the restriction to $\G$ of any continuous epimomorphism from $\widehat{\Gamma}$
to a  finite group is onto.   A deep theorem of Nikolov and Segal
\cite{NS} implies that if $\G$ is finitely generated then
{\em every} homomorphism from $\widehat{\G}$ to a finite group is 
continuous.

 For every finite group $Q$, the restriction $f\mapsto f\circ i$, gives
a bijection from the set of epimorphisms $\wh{\G}\twoheadrightarrow Q$ to the set of
epimorphisms ${\G}\twoheadrightarrow Q$. From consideration of the finite abelian quotients, the
 following useful lemma is easily deduced.

\begin{lemma}\label{l:abel}
Let $\G_1$ and $\G_2$ be finitely generated groups. If $\wh\G_1\cong\wh\G_2$, then $H_1(\G_1,\Z)\cong H_1(\G_2,\Z)$.
\end{lemma}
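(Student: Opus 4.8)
The plan is to reduce the statement to the case of finitely generated abelian groups. First I would note that $H_1(\G_i,\Z)$ is nothing but the abelianization $\G_i^{\ab}$, which is a finitely generated abelian group because $\G_i$ is finitely generated; so it suffices to recover $\G_i^{\ab}$ from $\wh{\G}_i$ up to isomorphism. To extract the abelian data, apply the correspondence recalled just above: for every finite group $Q$ the restriction map gives a bijection $\Epi(\wh{\G}_i,Q)\to\Epi(\G_i,Q)$, and when $Q$ is abelian every such homomorphism factors through $\G_i^{\ab}$, so $\Epi(\G_i,Q)$ is in bijection with $\Epi(\G_i^{\ab},Q)$. Hence an isomorphism $\wh{\G}_1\cong\wh{\G}_2$ forces $\G_1^{\ab}$ and $\G_2^{\ab}$ to have precisely the same finite quotients, and therefore $\wh{\G_1^{\ab}}\cong\wh{\G_2^{\ab}}$ by the standard equivalence cited from \cite{DFPR}. (Equivalently, one can pass directly to the maximal abelian Hausdorff quotient $\wh{\G}_i/\overline{[\wh{\G}_i,\wh{\G}_i]}$, which is canonically $\wh{\G_i^{\ab}}$.)

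It then remains to prove the purely arithmetic fact that a finitely generated abelian group is determined by its profinite completion. Writing $A_i:=\G_i^{\ab}\cong\Z^{n_i}\oplus T_i$ with $T_i$ finite, we have $\wh{A}_i\cong\wh{\Z}^{\,n_i}\oplus T_i$. Since $\wh{\Z}$ is torsion-free, the torsion subgroup of $\wh{A}_i$ is exactly $T_i$, so $\wh{A}_1\cong\wh{A}_2$ gives $T_1\cong T_2$; and for any prime $p$ the mod-$p$ reduction $\wh{A}_i/p\wh{A}_i\cong A_i/pA_i$ has $\F_p$-dimension $n_i+\dim_{\F_p}(T_i/pT_i)$, which pins down $n_i$. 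Combining, $A_1\cong A_2$, and since $A_i=H_1(\G_i,\Z)$ this is the assertion of the lemma.

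The only point requiring a moment's care is that the reconstruction of the abelianization is genuinely canonical — that restricting attention to \emph{finite} abelian quotients loses no information — but this is immediate from the bijection between finite quotients of $\G$ and of $\wh{\G}$ recorded in this section, so there is no real obstacle; the lemma serves as a warm-up for the finer results to follow.
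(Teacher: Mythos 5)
Your argument is correct and is exactly the paper's (unwritten) argument: the paper merely remarks that the lemma "is easily deduced from consideration of the finite abelian quotients," and your proof fills in precisely that sketch — the finite abelian quotients of $\G_i$ determine $\wh{\G_i^{\ab}}$, and a finitely generated abelian group $\Z^{n}\oplus T$ is recovered from its profinite completion via the torsion subgroup and the mod-$p$ reductions. No issues; the details (torsion-freeness of $\wh{\Z}$, $\wh{A}/p\wh{A}\cong A/pA$) are all sound.
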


\medskip
\noindent{\bf{Notation and terminology.}}  Given a subset $X$ of a profinite group
$G$, we write $\overline X$ to denote the closure of $X$ in $G$.

\smallskip

Let $\G$ be a finitely generated, residually finite group and let $\Delta$ be a subgroup.
The inclusion $\Delta\hookrightarrow\G$ induces a continuous homomorphism
$\wh{\Delta}\to\wh{\Gamma}$ whose image is $\-{\Delta}$. The map $\wh{\Delta}\to\wh{\Gamma}$
is injective if and only if $\-{\Delta}\cong\wh{\Delta}$; and in this circumstance one says that
{\em $\Gamma$ induces the full profinite topology on $\Delta$}. For finitely generated groups,
this is equivalent to the statement that for every subgroup of finite index $I<\Delta$ there is a 
subgroup of finite index $S<\G$ such that $S\cap\Delta \subset I$.

Note that if $\Delta<\G$ is of finite index, then   $\G$ induces the full
profinite topology on $\Delta$. 

\smallskip

There are two other situations of interest to us where
 $\G$ induces the full profinite topology on a 
subgroup.
First, suppose that $\G$ is a group and $H$ a subgroup
of $\G$, then $\G$ is called $H$-separable if for every $g\in
G\smallsetminus H$, there is a subgroup $K$ of finite index in $\G$
such that $H \subset K$ but $g\notin K$; equivalently, the intersection
of all finite index subgroups in $\Gamma$ containing $H$ is precisely
$H$.  The group $\G$ is called {\em LERF} (or {\em subgroup
  separable}) if it is $H$-separable for every finitely-generated
subgroup $H$, or equivalently, if every finitely generated subgroup is
a closed subset in the profinite topology.

It is important to note that even if a finitely generated subgroup
$H<\G$ is separable, it need not be the case that $\Gamma$ induces the
full profinite topology on $H$.  However, LERF does guarantee this:
each subgroup of finite index $H_0<H$ is finitely generated, hence
closed in $\G$, so we can write $H$ as a union of cosets $h_1H_0 \cup \dots
\cup h_nH_0$ and find subgroups of finite index $K_i<\G$ such that
$H_0\subset K_i$, but $h_i\not\in K_i$; the intersection of the $K_i$
is then a subgroup of finite index in $\G$ with $K\cap H= H_0$.\\[\baselineskip]

\noindent The second situation that is important for us is the following.

\begin{lemma}\label{l:full} If $N$ is finitely generated, then any semidirect product of the form $\G=N\rtimes Q$ induces the full profinite topology on $N$.
\end{lemma}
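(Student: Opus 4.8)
The plan is to verify the combinatorial criterion recalled just before the lemma: since $N$ is finitely generated, it suffices to produce, for each subgroup of finite index $I<N$, a subgroup of finite index $S<\G$ with $S\cap N\subseteq I$.

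First I would replace $I$ by a characteristic subgroup of $N$ that it contains. Set $m=[N:I]$ and let $K$ be the intersection of all subgroups of $N$ of index $m$. A finitely generated group has only finitely many subgroups of any given finite index, so this is a finite intersection and $K$ has finite index in $N$; moreover $K\subseteq I$, since $I$ is one of the subgroups being intersected. The advantage of $K$ over $I$ is that $K$ is \emph{characteristic} in $N$: every automorphism of $N$ permutes the finite set of index-$m$ subgroups and hence fixes their intersection $K$. As $N\ns\G$, conjugation by any $g\in\G$ restricts to an automorphism of $N$, so $gKg^{-1}=K$; that is, $K\ns\G$.

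Next I would exploit the splitting $\G=N\rtimes Q$ to build $S$. Take $S=KQ$. Since $K\ns\G$ this is a subgroup of $\G$; and if $kq\in S\cap N$ with $k\in K$ and $q\in Q$, then $q=k^{-1}(kq)\in N\cap Q=\{1\}$, so $kq=k\in K$, and hence $S\cap N=K$. Finally $NS=NQ=\G$, so $[\G:S]=[N:N\cap S]=[N:K]<\infty$. Thus $S$ is a finite-index subgroup of $\G$ with $S\cap N=K\subseteq I$, as wanted, and the lemma follows.

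The only conceptual point — the reason a blunter argument does not work — is that one cannot simply take the normal core $\bigcap_{g\in\G}gIg^{-1}$ of $I$ in $\G$: because the quotient $\G/N\cong Q$ may be infinite, a normal subgroup of $\G$ contained in $N$ need not have finite index in $\G$. Forming instead the characteristic core $K$ \emph{inside} $N$ secures normality in $\G$ while keeping the index finite in $N$, and the complement $Q$ then promotes this to the finite-index subgroup of $\G$ that the criterion requires. No deeper input (separability, goodness, residual finiteness of $\G$) is needed.
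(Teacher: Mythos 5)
Your proof is correct and follows essentially the same route as the paper: both reduce to a characteristic finite-index subgroup of $N$ (the paper notes these form a fundamental system of neighbourhoods of $1$, you construct one explicitly as the intersection of all index-$m$ subgroups) and then use the semidirect product structure to produce a finite-index $S<\G$ meeting $N$ in that subgroup. The only cosmetic difference is the choice of $S$: you take $S=KQ$ directly, while the paper takes the kernel of $\G\to (N/C)\rtimes Q_C$, which in addition is normal in $\G$; both work equally well here.
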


\begin{proof} The characteristic subgroups $C<N$ of finite index are a fundamental 
system of neighbourhoods of $1$ defining the profinite topology on $N$,
so it is enough to construct a subgroup of finite index $S<\G$ such that $S\cap N=C$. As $C$ is characteristic in $N$, it is invariant under the
action of $Q$, so the action $\Psi:Q\to{\rm{Aut}}(N)$ implicit in the semidirect product descends to an action
$Q\to {\rm{Aut}}(N/C)$, with image $Q_C$ say. Define $S$ to be the kernel of $\G\to (N/C)\rtimes Q_C$.
\end{proof}

\subsection{On the cohomology of profinite groups} We recall some facts about the cohomology of profinite
groups that we shall use.  
We refer the reader to \cite[Chapter 6]{RZ} and \cite{Se} for details.

Let $G$ be a profinite group,  $M$ a discrete $G$-module (i.e. an abelian group $M$ equipped with the discrete topology on which $G$ acts continuously) and  let $C^n(G,M)$ be the set of all continuous  maps $G^n\rightarrow M$. 
When equipped with the usual coboundary operator $d:C^n(G,M)\rightarrow C^{n+1}(G,M)$,
this defines a chain complex $C^*(G,M)$ whose cohomology groups 
$H^q(G;M)$ are the
{\em continuous cohomology groups} of $G$ with coefficients in $M$.

Now let $\G$ be a finitely generated group.
Following Serre \cite{Se}, we say that a group $\G$ is {\em good} 
if for all $q\geq 0$ and for every finite $\G$-module $M$, the homomorphism of cohomology groups
$$H^q(\widehat{\G};M)\rightarrow H^q(\G;M)$$
induced by the natural map $\G\rightarrow \widehat{\G}$ is an isomorphism
between the cohomology of $\G$ and the continuous cohomology of 
$\widehat{\G}$. 

Returning to the general setting again, let $G$ be a profinite group, 
the {\em $p$-cohomological dimension} of $G$ is the least integer $n$ such 
that for every discrete  $G$-module $M$ and for 
every $q>n$, the $p$-primary component of $H^q(G;M)$ is zero. This
is  denoted by $\cd_p(G)$. 
The cohomological dimension of $G$ is defined 
as the supremum of $\cd_p(G)$ over all primes $p$, 
and this is denoted by $\cd(G)$. 

We also retain the standard notation
$\cd(\G)$ for the cohomological dimension (over ${\Z}$) of a 
discrete group $\G$. 
%These basic connection between the discrete and profinite versions is given by

\begin{lemma} 
\label{cdim}\label{l:cd}
If the discrete group $\G$  is good, then  $\cd(\widehat{\Gamma})\le \cd(\G)$.
If, in addition, $\G$ is the fundamental group of a closed aspherical manifold, then $\cd(\widehat{\Gamma})= \cd(\G)$.
\end{lemma}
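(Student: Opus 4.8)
The plan is to deduce both statements from the defining property of goodness together with the standard fact that the continuous cohomology of a profinite group is detected by its finite coefficient modules.

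First I would establish the inequality $\cd(\wh\G)\le\cd(\G)$. Put $n=\cd(\G)$; we may assume $n<\infty$, since otherwise there is nothing to prove. Recall from \cite[Chapter 6]{RZ} that for a profinite group $G$ one has $\cd_p(G)\le n$ if and only if $H^q(G;M)=0$ for every $q>n$ and every finite $p$-primary discrete $G$-module $M$; this uses that a discrete torsion module is the direct limit of its finite submodules and that continuous cohomology commutes with such direct limits. Hence it suffices to show $H^q(\wh\G;M)=0$ whenever $q>n$ and $M$ is a finite discrete $\wh\G$-module. Because $M$ is finite the action $\wh\G\to\aut(M)$ has finite image, so restriction along $\G\to\wh\G$ makes $M$ a finite $\G$-module; conversely any finite $\G$-module arises this way, the $\wh\G$-action being the unique continuous extension (a continuous action on a finite set is determined by its restriction to the dense subgroup $\G$). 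Goodness of $\G$ now yields $H^q(\wh\G;M)\cong H^q(\G;M)$, and the right-hand side vanishes for $q>n=\cd(\G)$ by definition of the cohomological dimension of a discrete group. Thus $\cd_p(\wh\G)\le n$ for every prime $p$, and therefore $\cd(\wh\G)\le n=\cd(\G)$.

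For the second assertion, write $\G=\pi_1(X)$ with $X$ a closed aspherical manifold of dimension $d$. Then $X$ is a $K(\G,1)$ homotopy equivalent to a finite CW complex of dimension $d$, so $\cd(\G)\le d$; on the other hand Poincar\'e duality with $\Z/2$-coefficients gives $H^d(X;\Z/2)\cong H_0(X;\Z/2)\cong\Z/2\ne 0$, so $H^d(\G;\Z/2)\ne 0$ and hence $\cd(\G)=d$. Applying goodness once more, this time with the finite trivial module $\Z/2$, we obtain $H^d(\wh\G;\Z/2)\cong H^d(\G;\Z/2)\ne 0$; since $\Z/2$ is $2$-primary this shows $\cd_2(\wh\G)\ge d$, so $\cd(\wh\G)\ge d$. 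Combining this with the first part gives $\cd(\wh\G)=d=\cd(\G)$.

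The routine ingredients here — the existence of a finite $d$-dimensional CW model for $X$, and the mod-$2$ form of Poincar\'e duality, which sidesteps any orientability issue — require essentially no work. The one step that deserves a little care is the reduction in the first paragraph: one must invoke the correct characterisation of $\cd_p$ in terms of finite $p$-primary coefficient modules and the compatibility of continuous cohomology with direct limits of discrete modules, both of which are standard (see \cite[Chapter 6]{RZ} and \cite{Se}). Beyond that, everything is a direct application of the hypothesis that $\G$ is good.
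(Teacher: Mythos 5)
Your proof is correct and follows essentially the same route as the paper: goodness transfers the vanishing of cohomology in degrees above $\cd(\G)$ to finite (hence, via the standard direct-limit reduction, all relevant) coefficient modules for $\wh\G$, and for a closed aspherical $d$-manifold mod-$2$ Poincar\'e duality gives $H^d(\G,\F_2)\neq 0$, whence $H^d(\wh\G,\F_2)\neq 0$ and equality of dimensions. The only difference is that you spell out the reduction to finite $p$-primary modules and the module correspondence, which the paper leaves implicit.
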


\begin{proof} If $\cd(\G)\le n$ then $H^q(\G,M)=0$ for every $\G$-module $M$ and every $q>n$. By goodness,
 this vanishing transfers to the profinite setting in the context of finite modules.
 
If $\G$ is the fundamental group of a closed aspherical $d$-manifold $M$, then $\cd (\G) = d$. And by
Poincar\'e duality, $H^d(\G, \F_2)\neq 0$, whence $H^d(\wh{\G}, \F_2)\neq 0$.
\end{proof}

We also recall the following result (see \cite[Chapter 1 \S  3.3]{Se}).

\begin{proposition}
\label{cdbound}
Let $p$ be a prime, let $G$ be a profinite group, and $H$ a closed subgroup of $G$. 
Then $\cd_p(H) \leq \cd_p(G)$.
\end{proposition}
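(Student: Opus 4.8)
The plan is to transfer cohomology from $H$ up to $G$ via Shapiro's lemma, after first reducing to well-behaved coefficient modules. If $\cd_p(G)=\infty$ there is nothing to prove, so assume $n:=\cd_p(G)<\infty$. I would begin with the standard reduction (Serre \cite{Se}, I.3.1): for any profinite group $P$ one has $\cd_p(P)\le n$ if and only if $H^q(P;A)=0$ for every $q>n$ and every discrete $p$-primary torsion $P$-module $A$. Thus it suffices to prove that $H^q(H;A)=0$ whenever $q>n$ and $A$ is a discrete $p$-primary torsion $H$-module.

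The main tool is the induced-module construction for closed subgroups (Serre \cite{Se}, I.2.5). Given such an $A$, let $M^H_G(A)$ denote the set of continuous maps $f\colon G\to A$ satisfying $f(hx)=h\cdot f(x)$ for all $h\in H$ and $x\in G$, equipped with the $G$-action $(g\cdot f)(x)=f(xg)$. Because $G$ is compact and $A$ is discrete, every such $f$ is locally constant with finite image and has open stabiliser in $G$, so $M^H_G(A)$ is again a \emph{discrete} $G$-module; and since each $f$ has finite image inside the $p$-primary group $A$, the module $M^H_G(A)$ is itself $p$-primary torsion. The key input is then Shapiro's lemma in the profinite setting (Serre \cite{Se}, I.2.5): $H^q(G;M^H_G(A))\cong H^q(H;A)$ for all $q\ge 0$.

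Assembling these, since $M^H_G(A)$ is a discrete $p$-primary torsion $G$-module and $q>n=\cd_p(G)$, we get $H^q(G;M^H_G(A))=0$, and Shapiro's isomorphism forces $H^q(H;A)=0$. As this holds for all such $A$ and all $q>n$, the reduction step gives $\cd_p(H)\le n=\cd_p(G)$, as required. (Alternatively, one could write $H$ as an intersection of open subgroups $U_i$, use continuity of cohomology $H^q(H;A)=\varinjlim_i H^q(U_i;A)$ on discrete modules, and run the same Shapiro argument for the $U_i$; but the uniform treatment above is cleaner.) The only non-routine points are the two facts borrowed from Serre — that the induced module of a discrete module is discrete, and that Shapiro's lemma holds for continuous cohomology of a closed subgroup — and both rest on the compactness of $G$ together with the discreteness of the coefficients. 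Everything else is bookkeeping, so the genuine content is simply invoking the right pieces of the profinite cohomology machinery.
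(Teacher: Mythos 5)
Your proof is correct, and it is essentially the argument from the source the paper itself cites without proof (Serre, \emph{Galois Cohomology}, I.3.3): reduce to discrete $p$-primary torsion coefficients, pass to the coinduced module $M^H_G(A)$, and apply Shapiro's lemma for closed subgroups. Since the paper simply quotes the result from Serre, there is nothing further to compare.
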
 

Fundamental groups of closed surfaces are good and (excluding $\Sh^2$ 
and $\R {\rm{P}}^2$) have cohomological 
dimension $2$.   Free groups are good and have cohomological  dimension $1$.

\begin{corollary} 
\label{freenotsurface}
If $F$ is a non-abelian free group and $\Sigma$ is the fundamental group of a closed surface other than $\Sh^2$, then 
$\widehat F$ does not contain $\wh{\Sigma}$. 
\end{corollary}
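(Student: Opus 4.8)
The plan is to derive a contradiction from the cohomological dimensions of the two profinite groups, using goodness as the bridge between discrete and continuous cohomology. First I would recall that both $F$ and $\Sigma$ are good: this is stated in the excerpt just above the corollary (free groups are good of cohomological dimension $1$; closed surface groups other than $\Sh^2$ and $\R\mathrm{P}^2$ are good of cohomological dimension $2$). The case $\Sigma = \pi_1(\R\mathrm{P}^2) \cong \Z/2$ is not literally a ``surface other than $\Sh^2$'' in the relevant sense here — but in any event $\wh{\Sigma}$ would then be finite of even order, and $\wh F$ is torsion-free (being good of cohomological dimension $1$, hence $\cd_p(\wh F)\le 1$ for all $p$, which forbids nontrivial finite subgroups), so that case is immediate. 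So I may assume $\Sigma$ is the fundamental group of a closed surface of genus $\ge 1$, orientable or not.

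Next, suppose for contradiction that $\wh F$ contains a copy of $\wh{\Sigma}$ as a closed subgroup. By Lemma \ref{l:cd} applied to the good group $F$ of cohomological dimension $1$, we get $\cd(\wh F)\le 1$; that is, $\cd_p(\wh F)\le 1$ for every prime $p$. Then by Proposition \ref{cdbound}, for every prime $p$ we have $\cd_p(\wh{\Sigma})\le \cd_p(\wh F)\le 1$, so $\cd(\wh{\Sigma})\le 1$. On the other hand, since $\Sigma$ is good and is the fundamental group of a closed aspherical surface, the second assertion of Lemma \ref{l:cd} gives $\cd(\wh{\Sigma}) = \cd(\Sigma) = 2$ (the Poincar\'e duality argument in that lemma's proof shows $H^2(\wh{\Sigma},\F_2)\neq 0$, using $\F_2$-coefficients precisely so that the argument works for non-orientable surfaces as well). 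This contradicts $\cd(\wh{\Sigma})\le 1$, and the corollary follows.

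The only genuinely substantive point — and the one I would expect to be the ``main obstacle'' if anything is — is the input that closed surface groups are good together with the Poincar\'e-duality computation $\cd(\wh{\Sigma})=2$; but both of these are quoted as established facts in the excerpt (the former explicitly, the latter as the second clause of Lemma \ref{l:cd}), so the proof is really just an assembly of Lemma \ref{l:cd} and Proposition \ref{cdbound}. I would therefore write it as a short direct argument: assume the containment, bound $\cd(\wh{\Sigma})$ above by $1$ via goodness of $F$ and monotonicity of $\cd_p$ under passing to closed subgroups, contradict the lower bound $\cd(\wh{\Sigma})=2$, and separately dispose of the degenerate $\R\mathrm{P}^2$ case by torsion-freeness of $\wh F$.
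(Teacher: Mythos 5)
Your proof is correct and is essentially the paper's (implicit) argument: the corollary is stated as an immediate consequence of Lemma \ref{l:cd} and Proposition \ref{cdbound}, exactly the assembly you give, bounding $\cd(\wh{\Sigma})\le\cd(\wh F)\le 1$ and contradicting $\cd(\wh\Sigma)=2$ for closed aspherical surfaces. Your separate treatment of the $\R\mathrm{P}^2$ case via torsion-freeness of $\wh F$ is a sensible way to handle a degenerate case the paper passes over silently.
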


If one has a short exact sequence $1\to N\to \G\to Q\to 1$ where
both $N$ and $Q$ are good, and  $H^q(N;M)$ is finite for every finite $\G$-module
$M$ then $\G$ is good (see  \cite[Chapter 1 \S 2.6]{Se}). Hence we have:

\begin{lemma} 
\label{l:extend}
If $F\neq 1$ is finitely generated and free, then every group of the form $F\rtimes\Z$ is good and $\cd(\wh{F\rtimes\Z})=2$.
\end{lemma}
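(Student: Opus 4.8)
The plan is to verify goodness and compute the cohomological dimension directly from the short exact sequence
$$
1 \to F \to F\rtimes\Z \to \Z \to 1.
$$
First I would invoke the criterion recalled just above the statement: if $1\to N\to\G\to Q\to 1$ is exact with $N$ and $Q$ both good and $H^q(N;M)$ finite for every finite $\G$-module $M$, then $\G$ is good. Here $N=F$ is good since finitely generated free groups are good (stated in the excerpt), and $Q=\Z$ is good (it is free of rank $1$). The finiteness of $H^q(F;M)$ for finite $M$ is immediate: $F$ has a finite presentation complex which is a wedge of circles, so $H^0(F;M)$ and $H^1(F;M)$ are subquotients of finite groups ($M$ and $M^{\oplus r}$ respectively) and $H^q(F;M)=0$ for $q\geq 2$. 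Hence $F\rtimes\Z$ is good.

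Next I would pin down $\cd(\wh{F\rtimes\Z})$. Since $F\rtimes\Z$ is good, Lemma \ref{l:cd} gives $\cd(\wh{F\rtimes\Z})\le \cd(F\rtimes\Z)$, and $\cd(F\rtimes\Z)=2$: it is torsion-free (an extension of torsion-free by torsion-free) with cohomological dimension bounded by $\cd(F)+\cd(\Z)=1+1=2$, and it is not $2$-dimensional-trivial, i.e. $\cd\geq 2$, because it contains $\Z^2$ (a free-by-cyclic group of this form always contains a $\Z^2$ — e.g. the center or, more simply, $\langle x\rangle\times\Z$ for $x$ a nontrivial element of $F$ fixed by a power of the monodromy... more robustly one just notes $H^2(F\rtimes\Z;\Z[\G/\Z^2])\neq 0$ or uses that a nontrivial extension of $\Z$ by $F$ has a presentation $2$-complex that is aspherical of dimension $2$). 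So $\cd(F\rtimes\Z)=2$, giving $\cd(\wh{F\rtimes\Z})\le 2$. For the reverse inequality, pick a finite-index subgroup; alternatively, argue that $\wh\Z\times\wh\Z$ or $\wh{F}$ sits inside $\wh{F\rtimes\Z}$ with $\cd_p=2$ for some prime $p$: indeed $\wh\Z$ has $\cd_p(\wh\Z)=1$ for every $p$, so a closed copy of $\wh\Z\times\wh\Z$ (coming from a $\Z^2<F\rtimes\Z$ on which $\G$ induces the full profinite topology) has $\cd_p=2$, and Proposition \ref{cdbound} then forces $\cd(\wh{F\rtimes\Z})\ge 2$.

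The one point requiring care — and I expect it to be the main obstacle — is producing the lower bound $\cd(\wh{F\rtimes\Z})\ge 2$ cleanly, since the naive containment $\wh F\hookrightarrow\wh{F\rtimes\Z}$ only yields $\cd_p\ge 1$, and one must instead exhibit a closed subgroup with $p$-cohomological dimension $2$. The cleanest route is via the $\Z^2$ subgroup: by Lemma \ref{l:full}, $\G=F\rtimes\Z$ induces the full profinite topology on the subgroup $F$, hence (composing separability facts, or more directly because $\Z^2$ is of finite index in a retract) one gets that $\wh{\Z^2}\cong\wh\Z\times\wh\Z$ embeds as a closed subgroup of $\wh\G$; then $\cd_p(\wh\Z\times\wh\Z)=2$ for all $p$, and Proposition \ref{cdbound} gives $\cd_p(\wh\G)\ge 2$. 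Combined with the upper bound from goodness, this yields $\cd(\wh{F\rtimes\Z})=2$, completing the proof. If one prefers to avoid the $\Z^2$ subtlety, an alternative is to use that $\G$ is good and is a $\mathrm{PD}_2$-group-like object only virtually — but the $\Z^2$ argument is the most economical.
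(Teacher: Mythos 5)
Your goodness argument is exactly the paper's: Serre's extension criterion applied to $1\to F\to F\rtimes\Z\to\Z\to 1$, with $F$ and $\Z$ good and $H^q(F;M)$ finite for finite modules. The upper bound $\cd(\wh{F\rtimes\Z})\le 2$ via Lemma \ref{l:cd} and $\cd(F\rtimes\Z)\le\cd(F)+\cd(\Z)=2$ is also the intended route (and is all the paper actually uses later).

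However, your argument for the lower bound has a genuine gap: the claim that a group $F\rtimes_\phi\Z$ "always contains a $\Z^2$" is false once $\mathrm{rk}(F)\ge 3$. For an atoroidal automorphism (one with no periodic conjugacy classes) the mapping torus $F\rtimes_\phi\Z$ is word-hyperbolic by Brinkmann's theorem, hence contains no $\Z^2$; its centre is trivial (the paper itself notes the centre is trivial unless a power of $\phi$ is inner), and there is no nontrivial $x\in F$ fixed by a power of the monodromy. Since the lemma is stated for all finitely generated $F\neq 1$, the $\Z^2$ route cannot carry the proof. (It does work for $\mathrm{rk}(F)=2$, where $\phi$ preserves the conjugacy class of $[a,b]^{\pm1}$, but even there your justification that the closure of $\Z^2$ is $\wh{\Z}\times\wh{\Z}$ --- "finite index in a retract" --- is not correct as stated; in the 3-manifold setting one would cite Hamilton's separability of abelian subgroups, as the paper does elsewhere.) A fix that works in all ranks and stays within the paper's toolkit: by goodness and Shapiro's lemma, $H^2(\wh{\G};\F_p[\G/\G_0])\cong H^2(\G_0;\F_p)$ for any finite-index $\G_0\le\G=F\rtimes\Z$; taking $\G_0=F\rtimes_{\phi^m}\Z$ with $m$ the order of the image of $\phi$ in ${\rm{GL}}(H_1(F;\F_p))$ gives $\dim H_1(\G_0;\F_p)\ge 2$, and since $\chi(\G_0)=0$ this forces $H^2(\G_0;\F_p)\neq 0$; hence $\cd(\wh{\G})\ge 2$ without any appeal to $\Z^2$ subgroups or separability.
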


\subsection{$L^2$-betti numbers} The standard reference for this material is L\"uck's treatise \cite{Lu}.
$L^2$-betti numbers of groups are defined analytically, but L\"uck's Approximation Theorem  provides a
purely algebraic surrogate definition of the first $L^2$-betti number
for finitely presented, residually finite groups: if $(N_i)$ is a sequence of
finite-index normal subgroups in $\G$ with $\cap_iN_i=1$, then
$b_1^{(2)}(\G) = \lim_i b_1(N_i)/[\G:N_i]$. The fundamental group of every compact 3-manifold $X$ is
residually finite (see for example
\cite{AFW}), so we can use the surrogate definition  
$b_1^{(2)}(X)=b_1^{(2)}(\pi_1(X))$.

We shall require the following result  
\cite[Theorem 3.2]{BCR}.

\begin{proposition} 
\label{dense}
Let $\Lambda$ and $\G$ be finitely presented residually
finite groups
and suppose that $\Lambda$ is a dense subgroup of  $\widehat{\Gamma}$. 
Then  $b_1^{(2)}(\Gamma)\le  b_1^{(2)}(\Lambda)$.

In particular, if $\widehat{\Lambda}\cong \widehat{\G}$ then 
$b_1^{(2)}(\Gamma) = b_1^{(2)}(\Lambda)$.
\end{proposition}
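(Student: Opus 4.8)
The plan is to reduce the asserted inequality to a term-by-term comparison of the ordinary first betti numbers of corresponding finite-index subgroups, and then feed this into L\"uck's Approximation Theorem on each side. The starting point is a mild strengthening of the argument behind Lemma~\ref{l:abel}: if $A$ and $B$ are finitely generated abelian groups and every finite quotient of $A$ is also a quotient of $B$, then $\rk A\le\rk B$ (apply the hypothesis to $A\epi(\Z/p^k)^{\rk A}$ to get $|B/p^kB|\ge p^{k\,\rk A}$ for all $k$, and let $k\to\infty$). Consequently, if $H$ and $H'$ are finitely generated groups and $H'$ is isomorphic to a dense subgroup of $\wh H$, then every finite quotient of $H$, in particular every finite abelian one, is a quotient of $H'$, so $b_1(H)\le b_1(H')$.

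For the main step I would fix a descending chain $N_1\supseteq N_2\supseteq\cdots$ of finite-index normal subgroups of $\G$ that is \emph{cofinal} (every finite-index normal subgroup of $\G$ contains some $N_i$); such a chain exists because $\G$ has only countably many subgroups of finite index, and $\bigcap_i N_i=1$ since $\G$ is residually finite, so L\"uck's Theorem gives $b_1^{(2)}(\G)=\lim_i b_1(N_i)/[\G:N_i]$. Now regard $\Lambda$ as a dense subgroup of $\wh\G$ and set $\Lambda_i:=\Lambda\cap\-{N_i}$, where $\-{N_i}$ is the closure of $N_i$ in $\wh\G$. Since $N_i\ns\G$ has finite index, $\-{N_i}$ is the kernel of $\wh\G\epi\G/N_i$, hence open and normal in $\wh\G$, with $\-{N_i}\cong\wh{N_i}$ and $\-{N_i}\cap\G=N_i$; density of $\Lambda$ then makes $\Lambda_i$ a finite-index normal subgroup of $\Lambda$ with $[\Lambda:\Lambda_i]=[\G:N_i]$, and makes $\Lambda_i$ dense in $\-{N_i}$ (a dense subgroup meets every open subgroup in a dense subgroup). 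Cofinality of $(N_i)$ ensures that $(\-{N_i})$ is a neighbourhood basis of $1$ in $\wh\G$, so $\bigcap_i\-{N_i}=1$ and hence $\bigcap_i\Lambda_i=1$; thus $(\Lambda_i)$ is an admissible chain for L\"uck's Theorem applied to the finitely presented residually finite group $\Lambda$, yielding $b_1^{(2)}(\Lambda)=\lim_i b_1(\Lambda_i)/[\Lambda:\Lambda_i]$.

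With both sides expressed as limits over the single family $\{(N_i,\Lambda_i)\}$, the inequality follows term by term: for each $i$, $\Lambda_i$ is a dense subgroup of $\-{N_i}\cong\wh{N_i}$, so (both $N_i$ and $\Lambda_i$ being of finite index in finitely generated groups, hence finitely generated) the observation of the first paragraph gives $b_1(N_i)\le b_1(\Lambda_i)$; dividing by the common index $[\G:N_i]=[\Lambda:\Lambda_i]$ and passing to the limit gives $b_1^{(2)}(\G)\le b_1^{(2)}(\Lambda)$. The final clause is then immediate: if $\wh\Lambda\cong\wh\G$, then $\Lambda$ embeds densely in $\wh\G$ and $\G$ embeds densely in $\wh\Lambda$, so the inequality applies with the roles of $\Lambda$ and $\G$ exchanged and forces equality. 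I expect the only genuinely delicate point to be the choice of the chain $(N_i)$: a chain of finite-index subgroups of $\G$ with trivial intersection need not have closures in $\wh\G$ with trivial intersection ($2^i\Z$ inside $\Z$ is the standard warning), so without cofinality the pulled-back chain $(\Lambda_i)$ could fail the hypotheses of L\"uck's Theorem; once cofinality is imposed, the remaining verifications are routine.
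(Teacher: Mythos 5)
Your argument is correct, and it is in substance the proof of the result the paper relies on: the paper does not prove Proposition \ref{dense} itself but quotes it from \cite[Theorem 3.2]{BCR}, whose argument proceeds exactly as yours does --- a cofinal nested chain $(N_i)$ of finite-index normal subgroups of $\G$, the finite-index subgroups $\Lambda_i=\Lambda\cap\-{N_i}$ of $\Lambda$ with $[\Lambda:\Lambda_i]=[\G:N_i]$, the comparison $b_1(N_i)\le b_1(\Lambda_i)$ coming from the fact that the dense subgroup $\Lambda_i\subset\-{N_i}\cong\wh{N_i}$ surjects every finite quotient of $N_i$, and L\"uck approximation applied on both sides. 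You have also correctly isolated and handled the one delicate point, namely that cofinality of $(N_i)$ is what guarantees $\bigcap_i\-{N_i}=1$, hence $\bigcap_i\Lambda_i=1$, so that L\"uck's theorem applies to the chain $(\Lambda_i)$ in $\Lambda$.
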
 

The following special case of a result of Gaboriau \cite{gab} will also be useful.

\begin{proposition} 
\label{p:gab}
Let $1\to N\to\G\to Q\to 1$ be a short exact sequence of groups. If $\G$ is finitely presented, $N$ finitely
generated and $Q$ is infinite, then $b_1^{(2)}(\Gamma)=0$.
\end{proposition}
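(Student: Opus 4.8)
The plan is to deduce this from Gaboriau's computation of the first $L^2$-betti number of a group extension \cite{gab}. Two preliminary observations make the reduction transparent. First, $b_0^{(2)}(N)=0$ because $N$ is infinite. Second, $b_1^{(2)}(N)<\infty$ because $N$ is finitely generated: choosing $d$ generators gives a partial free resolution $\Z N^{d}\to\Z N\to\Z\to 0$, so $H_1(N;\ell^2 N)$ is a subquotient of $\ell^2 N\otimes_{\Z N}\Z N^{d}\cong(\ell^2 N)^{d}$, whence $\dim_{\mathcal N N}H_1(N;\ell^2 N)\le d$. Gaboriau's theorem on the first $L^2$-betti number of an extension asserts that for $1\to N\to\G\to Q\to 1$ with $N$ infinite and $b_1^{(2)}(N)<\infty$ one has $b_1^{(2)}(\G)=b_1^{(2)}(N)/|Q|$, with the convention that this is $0$ when $Q$ is infinite. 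Since $Q$ is infinite here, $b_1^{(2)}(\G)=0$. (The hypothesis that $\G$ is finitely presented is needed only to keep the $L^2$-invariants well-behaved, e.g.\ finite and computable by approximation.)

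If one prefers not to quote \cite{gab} as a black box, the same conclusion follows from the Lyndon--Hochschild--Serre spectral sequence of the extension, run in L\"uck's abelian category of $\mathcal N\G$-modules with its dimension function \cite{Lu}:
$$ E^2_{p,q}=H_p\big(Q;\,H_q(N;\ell^2 N)\big)\ \Longrightarrow\ H_{p+q}(\G;\ell^2\G), $$
where the coefficients carry the conjugation $Q$-action. Additivity of $\dim_{\mathcal N\G}$ along the induced filtration of $H_1(\G;\ell^2\G)$ gives $b_1^{(2)}(\G)\le\dim_{\mathcal N\G}E^2_{1,0}+\dim_{\mathcal N\G}E^2_{0,1}$. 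Here $\dim_{\mathcal N\G}E^2_{1,0}=0$ because $H_0(N;\ell^2 N)$ already has $\mathcal N N$-dimension $b_0^{(2)}(N)=0$, and $\dim_{\mathcal N\G}E^2_{0,1}=0$ because $E^2_{0,1}$ is the module of $Q$-coinvariants of $H_1(N;\ell^2 N)$ — a module of finite $\mathcal N N$-dimension — and the $Q$-coinvariants of such a module have zero $\mathcal N\G$-dimension whenever $Q$ is infinite (a standard estimate in L\"uck's dimension theory, reflecting $b_0^{(2)}(Q)=0$).

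The main obstacle, should one reprove the statement from scratch, is the dimension-theoretic bookkeeping in this second route: since $N$ is only assumed finitely generated (not finitely presented), $H_1(N;\ell^2 N)$ need not be a finitely generated Hilbert $\mathcal N N$-module, so one must work with L\"uck's extension of von Neumann dimension to arbitrary modules and invoke its additivity and continuity along exact sequences rather than manipulate Hilbert modules directly. All of this is already packaged in \cite{gab}, so the expedient course is to cite Gaboriau's result as above; the only inputs genuinely specific to our hypotheses are the bound $b_1^{(2)}(N)\le d<\infty$ from finite generation and the vanishing $b_0^{(2)}(N)=b_0^{(2)}(Q)=0$ from infiniteness.
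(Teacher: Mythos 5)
Your proof takes essentially the same route as the paper: Proposition \ref{p:gab} is presented there simply as a special case of Gaboriau's theorem, with no argument beyond the citation of \cite{gab}, and your reduction — the bound $b_1^{(2)}(N)\le d<\infty$ from finite generation plus the appeal to Gaboriau — is exactly what is intended (your spectral-sequence sketch is a gloss on how such results are proved in L\"uck's dimension theory, not a genuinely different argument). One caveat is worth recording: both your opening observation $b_0^{(2)}(N)=0$ and the theorem you quote from \cite{gab} require $N$ to be \emph{infinite}, which is not among the stated hypotheses. In fact the proposition is false as literally stated: taking $N=1$ and $\G=Q=F_2$ satisfies all the listed hypotheses, yet $b_1^{(2)}(\G)=1$ (and finite nontrivial $N$ gives similar examples, since then $b_1^{(2)}(\G)=b_1^{(2)}(Q)/|N|$). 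So the infinitude of $N$ is a tacit hypothesis; it holds in every application made in the paper, where $N$ is a finitely generated free or surface-type fibre group, and with that hypothesis added your argument is correct.
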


\section{Groups of the form $F\rtimes\Z$}\label{s:fibres}

In this section we explore the extent to which free-by-cyclic groups are determined by their profinite completions,
paying particular attention to the figure-eight knot.
We remind the reader of our notation: $b_1(G)$ is the rank of $H^1(G,\Z)$
and $F_r$ is a free group of rank $r$.
\smallskip

Let $\G$ be a finitely generated group. If $b_1(\G)=1$, then there is
a unique normal subgroup $N \subset \G$ such that $\G/N\cong
\Z$  and the kernel of the induced map $\wh{\G}\to\wh{\Z}$ is the closure
$\-{N}$ of $N$.  We saw in Lemma \ref{l:full} that if $N$ is finitely
generated then $\-{N}\cong\wh{N}$, in which case we have a short exact
sequence:

\begin{equation}\tag{$\dagger$}\label{equ}
1\to \wh{N}\to \wh{\G}\to\wh{\Z}\to 1.
\end{equation}

\begin{lemma}\label{l:sameHat} Let $\G_1=N_1\rtimes\Z$ and $\G_2=N_2\rtimes\Z$, with $N_1$ and $N_2$ finitely generated. If
$b_1(\G_1)=1$ and $\wh{\G}_1\cong \wh{\G}_2$, then $\wh{N}_1\cong\wh{N}_2$.
\end{lemma}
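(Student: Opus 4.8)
The plan is to compare the two canonical $\Z$-quotients of $\wh{\G}_1\cong\wh{\G}_2$ and show that the kernels are forced to coincide, so that $\wh{N}_1\cong\wh{N}_2$ via the short exact sequence (\ref{equ}).

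First I would observe that by Lemma \ref{l:abel}, $\wh{\G}_1\cong\wh{\G}_2$ forces $H_1(\G_1,\Z)\cong H_1(\G_2,\Z)$, and in particular $b_1(\G_2)=b_1(\G_1)=1$. Thus both $\G_1$ and $\G_2$ have first Betti number $1$, and each has a \emph{unique} normal subgroup with infinite cyclic quotient; the splittings $\G_i=N_i\rtimes\Z$ exhibit $N_i$ as precisely this subgroup (since any homomorphism $N_i\to\Z$ would, by finite generation of $N_i$ and $b_1(\G_i)=1$, have to factor appropriately — more carefully, $N_i$ is the kernel of the map $\G_i\to H_1(\G_i,\Z)/\mathrm{torsion}\cong\Z$). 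Because $N_i$ is finitely generated, Lemma \ref{l:full} gives $\-{N}_i\cong\wh{N}_i$ inside $\wh{\G}_i$, and the discussion preceding the lemma gives the short exact sequence $1\to\wh{N}_i\to\wh{\G}_i\to\wh{\Z}\to 1$, in which $\wh{N}_i$ is the kernel of the map $\wh{\G}_i\to\wh{\Z}$ induced by $\G_i\epi\Z$.

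The key step is then to argue that an isomorphism $\Phi\colon\wh{\G}_1\to\wh{\G}_2$ must carry $\wh{N}_1$ onto $\wh{N}_2$. This follows because the map $\wh{\G}_i\to\wh{\Z}$ is, up to the automorphisms of $\wh{\Z}$, the \emph{canonical} maximal procyclic quotient: concretely, the composition $\wh{\G}_i\to\wh{\Z}$ can be described purely in terms of $\wh{\G}_i$ as the quotient by the closed subgroup generated by all torsion-free-part-killing data, or most cleanly, one notes that $H^1(\wh{\G}_i,\wh{\Z})$ together with the Nikolov--Segal theorem identifies the continuous epimorphisms $\wh{\G}_i\epi\wh{\Z}$ with $\Hom(\G_i,\Z)=\Z$, so there is an essentially unique such epimorphism and its kernel is a characteristic closed subgroup of $\wh{\G}_i$. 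Hence $\Phi(\wh{N}_1)$, being the kernel of a continuous epimorphism $\wh{\G}_2\epi\wh{\Z}$, must equal $\wh{N}_2$, and $\Phi$ restricts to an isomorphism $\wh{N}_1\xrightarrow{\sim}\wh{N}_2$.

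I expect the main (though modest) obstacle to be the bookkeeping that shows the epimorphism $\wh{\G}_i\epi\wh{\Z}$ is canonical: one must check that any continuous epimorphism $\wh{\G}_i\to\wh{\Z}$ restricts to a homomorphism $\G_i\to\Z$ (using density of $\G_i$ and that $\Z$ is the image's restriction), that under $b_1(\G_i)=1$ all such homomorphisms have the same kernel $N_i$, and that conversely every finite cyclic quotient of $\G_i$ that factors through $\Z$ is detected, so the closure of $N_i$ is exactly the kernel upstairs. All of this is routine given Lemmas \ref{l:abel} and \ref{l:full} and the remarks preceding (\ref{equ}); no new ideas are required beyond carefully invoking the uniqueness of the $\Z$-quotient at both the discrete and profinite levels.
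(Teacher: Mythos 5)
Your proposal is correct and follows essentially the same route as the paper: fix an identification $\wh{\G}_1=\wh{\G}_2$, note that when $b_1=1$ the kernel of the continuous epimorphism onto $\wh{\Z}$ is canonical, and conclude via Lemma \ref{l:full} that this common kernel is $\wh{N}_1\cong\wh{N}_2$. Two small corrections to your justification of the canonicity: it does not come from Nikolov--Segal or from identifying continuous epimorphisms $\wh{\G}_i\to\wh{\Z}$ with $\Hom(\G_i,\Z)$ (the continuous homomorphisms form a copy of $\wh{\Z}$, not $\Z$); rather, any continuous epimorphism $\wh{\G}_i\to\wh{\Z}$ factors through $\wh{H_1(\G_i,\Z)}\cong\wh{\Z}\times T$ with $T$ finite, kills $T$ because $\wh{\Z}$ is torsion-free, and then induces an automorphism of $\wh{\Z}$, so all such epimorphisms have the same kernel $\-{N}_i$. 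Also, the "bookkeeping" step you anticipate -- that a continuous epimorphism $\wh{\G}_i\to\wh{\Z}$ restricts on the dense subgroup $\G_i$ to a $\Z$-valued homomorphism -- is false in general (this is exactly the issue highlighted in Remark \ref{b_1need}), but it is not needed: comparing kernels suffices.
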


\begin{proof} For finitely generated groups, $\wh{\G}_1\cong \wh{\G}_2$ implies $H_1(\G_1,\Z)\cong H_1(\G_2,\Z)$.
Thus $b_1(\G_2)=1$. We fix an identification
$\wh{\G}_1= \wh{\G}_2$. Then $N_1$ and $N_2$ are dense in the kernel of the canonical epimorphism $\wh\G_1
\rightarrow\wh\Z$ described in (\ref{equ}). And by Lemma \ref{l:full}, this kernel is isomorphic to $\wh{N}_i$ for $i=1,2$.
\end{proof}

\subsection{The groups $\G_\phi=F_r\rtimes_\phi\Z$}

The action of ${\rm{Aut}}(F_2)$ on $H_1(F_2,\Z)$ gives an epimorphim $\aut(F_2)\to {\rm{GL}}(2,\Z)$ whose kernel 
is the group of inner automorphisms. The isomorphism type of $\G_\phi$ depends only on the 
conjugacy class of the image of $\phi$
in $\out(F_2)={\rm{GL}}(2,\Z)$, and we often regard $\phi$ as an element of ${\rm{GL}}(2,\Z)$.
We remind the reader that finite-order
elements of ${\rm{GL}}(2,\Z)$ are termed {\em{elliptic}},  infinite-order elements with an eigenvalue of absolute value bigger than $1$ are {\em{hyperbolic}}, and the other infinite-order elements are {\em{parabolic}}. 
Note that an element of infinite order in $\SL(2,\Z)$
is hyperbolic if and only if $|\tr(\phi)|>2$.

Every automorphism $\phi$ of $F_2$ sends $[a,b]$ to a conjugate of
$[a,b]^{\pm 1}$
and hence can be realised as an automorphism of the once-punctured torus (orientation-preserving or reversing according to whether $\det\phi = \pm 1$). 
 
\subsection{Features distinguished by $\wh{\G}_\phi$} 

\begin{proposition}\label{p:unique}
Suppose  $\wh{\G}_{\phi_1}\cong\wh\G_{\phi_2}$.
If $\phi_1$ is hyperbolic then $\phi_2$ is hyperbolic and has the same eigenvalues as $\phi_1$
(equivalently, $\det \phi_1=\det \phi_2$ and $\tr\phi_1=\tr\phi_2$).
\end{proposition}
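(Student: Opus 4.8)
The plan is to read off the two numerical invariants $\det\phi$ and $\tr\phi$ from $\wh{\G}_\phi$, so that the hypotheses and conclusions about ellipticity/parabolicity/hyperbolicity become statements about $|\tr\phi|$ and $\det\phi$. The determinant is the easy half: since $\G_\phi = F_2\rtimes_\phi\Z$ and abelianising $F_2$ turns $\phi$ into its image in $\GL(2,\Z)$, a direct computation gives $H_1(\G_\phi,\Z)\cong \Z\oplus\mathrm{coker}(\phi - I)$, where $I$ is the identity matrix. By Lemma~\ref{l:abel}, $\wh{\G}_{\phi_1}\cong\wh{\G}_{\phi_2}$ forces $H_1(\G_{\phi_1},\Z)\cong H_1(\G_{\phi_2},\Z)$, hence $\mathrm{coker}(\phi_1 - I)\cong \mathrm{coker}(\phi_2 - I)$ as abelian groups; comparing orders, $|\det(\phi_i - I)| = |1 - \tr\phi_i + \det\phi_i|$ must agree for $i=1,2$. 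This one equation is not yet enough, so I need a second input.

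The second, and more delicate, step is to pin down $\tr\phi$ (equivalently $\det\phi$, given the $H_1$ relation). Here I would use the finite cyclic quotients $\Z/n$ more carefully: combining $\wh{\G}_{\phi_1}\cong\wh{\G}_{\phi_2}$ with Lemma~\ref{l:sameHat} (both $N_i = F_2$ are finitely generated and $b_1(\G_{\phi_1})=1$, so $b_1(\G_{\phi_2})=1$ too) yields $\wh{F_2}\cong\wh{F_2}$ equivariantly: more precisely, fixing an identification $\wh{\G}_{\phi_1}=\wh{\G}_{\phi_2}$, the common kernel $K$ of the map to $\wh\Z$ contains both copies of $F_2$ as dense subgroups, and the generator of the $\Z$ on each side acts on $K\cong\wh{F_2}$ by (the continuous extension of) $\phi_1$, resp.\ $\phi_2$, up to the ambiguity of an inner automorphism and of replacing the generator by a topological generator of $\wh\Z$, i.e.\ multiplying the action by an element of $\wh\Z^\times$. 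So $\wh{\phi_2}$ is conjugate in $\aut(\wh{F_2})$ to some power $\wh{\phi_1}^{\,u}$ with $u\in\wh\Z^\times$. Passing to the abelianisation $\wh{F_2}^{\mathrm{ab}}\cong\wh\Z^2$, this says that $\phi_2\in\GL(2,\Z)$ and $\phi_1^{\,u}\in\GL(2,\wh\Z)$ are conjugate in $\GL(2,\wh\Z)$; hence they have the same characteristic polynomial over $\wh\Z$. Therefore $\det\phi_2 = \det(\phi_1^{\,u}) = (\det\phi_1)^u$ and $\tr\phi_2 = \tr(\phi_1^{\,u})$ in $\wh\Z$.

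To finish, I would analyse these identities for each eigenvalue type of $\phi_1$. Write the eigenvalues of $\phi_1$ as $\lambda, \mu$ with $\lambda\mu = \det\phi_1 = \pm 1$. If $\phi_1$ is hyperbolic then $\lambda$ is a real algebraic integer with $|\lambda|>1$ and $\mu = \pm\lambda^{-1}$, so $|\lambda^u - \text{(the value forced for }\phi_2)|$ must be consistent with $\phi_2\in\GL(2,\Z)$: the point is that $\tr\phi_2 = \lambda^u + \mu^u \in\Z$ and $\det\phi_2 = (\lambda\mu)^u\in\{\pm 1\}$, while the exponent $u$ is only defined up to adjusting by the pro-order of $\phi_1$ in $\GL(2,\wh\Z)$ — but $\phi_1$ hyperbolic has infinite order and in fact its reductions mod $n$ have orders going to infinity, so the congruence constraints on $u$ coming from $\tr\phi_1^{\,u}\equiv\tr\phi_1\pmod n$ for all $n$, together with the $H_1$ equation from Step~1, force $u\equiv \pm 1$ in a strong enough sense to conclude $\tr\phi_2 = \tr\phi_1$ and $\det\phi_2 = \det\phi_1$. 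This last congruence-theoretic bookkeeping — showing the ambient $\wh\Z^\times$-ambiguity in $u$ collapses to $\pm 1$ on the level of traces and determinants — is the main obstacle; I expect it to hinge on the fact that a hyperbolic matrix generates a Zariski-dense enough subgroup that its centraliser in $\GL(2,\wh\Z)$ is small, so that only $u = \pm 1$ gives $\phi_1^{\,u}$ conjugate to an \emph{integral} matrix, and on combining this with the mod-$n$ information to rule out $u = -1$ flipping the trace (which it cannot, since $\tr$ is symmetric in the eigenvalues). Ellipticity and parabolicity of $\phi_1$ are then excluded by the same eigenvalue bookkeeping: $|\tr\phi_1|\le 2$ is preserved, so $\phi_2$ cannot be hyperbolic when $\phi_1$ is not, and conversely; combined with the determinant identity this gives the stated dichotomy.
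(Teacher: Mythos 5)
Your Step 1 reproduces the paper's use of $H_1$ (Lemmas \ref{l:abel} and \ref{l:torsion}), and your Step 2 --- identifying the closures of the fibres via Lemmas \ref{l:sameHat} and \ref{l:full} and deducing that $\wh{\phi}_2$ agrees, up to inner automorphisms and re-identification of the kernel with $\wh{F_2}$, with $\wh{\phi}_1^{\,u}$ for some unit $u\in\wh{\Z}^\times$, hence that $\phi_2$ is conjugate to $\phi_1^{\,u}$ in ${\GL}(2,\wh{\Z})$ --- is a sound structural observation that can be made rigorous. But the argument stops exactly where the real work begins. To conclude you must show that $\tr(\phi_1^{\,u})=\tr\phi_1$ and that $\phi_2$ is hyperbolic, and for this you offer only expectations (``I expect it to hinge on\dots''). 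The claim you lean on --- that only $u=\pm 1$ can make $\phi_1^{\,u}$ conjugate in ${\GL}(2,\wh{\Z})$ to an integral matrix --- is not a routine consequence of the centraliser of $\phi_1$ being small: deciding for which $u\in\wh{\Z}$ one has $\lambda^u+\mu^u\in\Z$ is a genuine number-theoretic problem about units in real quadratic orders (it is essentially the hard point in the authors' later work with Wilton on punctured-torus bundles), and nothing in your sketch addresses it. Moreover, your stated reason for excluding a trace flip at $u=-1$ (``$\tr$ is symmetric in the eigenvalues'') fails when $\det\phi_1=-1$: there the eigenvalues of $\phi_1^{-1}$ are $-\mu,-\lambda$, so $\tr(\phi_1^{-1})=-\tr\phi_1$, and indeed $\G_{\phi}\cong\G_{\phi^{-1}}$, so this case requires separate treatment rather than dismissal. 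Since hyperbolicity of $\phi_2$ is also routed through this unproved bookkeeping, none of the three conclusions is actually established; this is a genuine gap, not a presentational one.

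The paper avoids all of this with purely abelian data from finite-index subgroups. Since $b_1(\G_{\phi_1})=1$, the identification of completions passes to the canonical cyclic covers, giving $\wh{\G}_{\phi_1^r}\cong\wh{\G}_{\phi_2^r}$ for every $r\neq 0$ (Lemma \ref{l:L1}); hyperbolicity of $\phi_2$ is then read off from the elementary fact that $b_1(\G_{\phi^r})=1$ for all $r\neq 0$ if and only if $\phi$ is hyperbolic (Corollary \ref{c:c1}); $\det\phi_1=\det\phi_2$ comes from the conjugation action of $\wh{\Z}$ on the mod-$3$ homology of the fibre (Lemma \ref{l:det}) --- in your set-up this step is actually easy, since $u\in\wh{\Z}^\times$ is odd and so $(\det\phi_1)^u=\det\phi_1$; and the trace is then pinned down from the torsion $|1+\det\phi-\tr\phi|$ of $H_1$ (Lemma \ref{l:torsion}(2)). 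So the workable fix is either to supply the missing number-theoretic argument controlling $\{u\in\wh{\Z}^\times:\tr(\phi_1^{\,u})\in\Z\}$, or to replace your Step 3 by the paper's passage to cyclic covers and betti numbers, which needs nothing beyond abelianisations.
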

 
We break the proof of this proposition into three lemmas.
First, for
the assertion about $\det\phi$, we use the short exact sequence (\ref{equ}).

\begin{lemma}\label{l:det}
If $b_1(\G_{\phi_1})=1$ and $\wh{\G}_{\phi_1}\cong\wh{\Gamma}_{\phi_2}$, then  $\det\phi_1 = \det\phi_2$.
\end{lemma}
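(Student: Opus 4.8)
The plan is to extract $\det\phi_i$ from the action of $\wh{\Z}$ on the abelianization of the kernel $\wh{N}_i=\wh{F_r}$ in the short exact sequence (\ref{equ}), where $N_i=F_r$. First I would observe that the hypothesis $b_1(\G_{\phi_1})=1$ forces $b_1(\G_{\phi_2})=1$ by Lemma \ref{l:abel}, so both groups genuinely sit in sequences of the form (\ref{equ}); fixing an identification $\wh{\G}_{\phi_1}=\wh{\G}_{\phi_2}$, Lemma \ref{l:sameHat} gives a compatible identification of the kernels $\wh{F_2}$ and of the quotients $\wh{\Z}$, and these identifications are canonical because the kernel is the closure of the unique index-$\Z$ normal subgroup and the quotient is $\wh{H_1}/(\text{torsion})$.

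Next I would pass to abelianizations. The conjugation action of a generator $t$ of $\Z$ on $N_i=F_2$ induces on $H_1(F_2,\Z)\cong\Z^2$ precisely the matrix $\phi_i\in\GL(2,\Z)$ (this is how $\phi_i$ is defined, up to conjugacy, as an element of $\GL(2,\Z)$). Taking profinite completions, the action of $\wh{\Z}$ on $\wh{F_2}$ descends to an action on $(\wh{F_2})^{\ab}\cong\wh{\Z}^2=\prod_p\Z_p^2$, and a topological generator of $\wh{\Z}$ acts by the image of $\phi_i$ in $\GL(2,\wh{\Z})=\prod_p\GL(2,\Z_p)$. Since the two short exact sequences have been identified, the same automorphism of $\wh{\Z}^2$ is obtained from $\phi_1$ and from $\phi_2$. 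Now $\det:\GL(2,\wh{\Z})\to\wh{\Z}^\times$ is continuous, so $\det\phi_1$ and $\det\phi_2$ have the same image in $\wh{\Z}^\times$; but $\det\phi_i\in\{\pm 1\}\subset\Z^\times\hookrightarrow\wh{\Z}^\times$ injectively (for instance, their images already differ mod $3$), so $\det\phi_1=\det\phi_2$.

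The one point that needs care — and the step I expect to be the main obstacle — is the claim that the identification $\wh{\G}_{\phi_1}=\wh{\G}_{\phi_2}$ really does induce a $\wh{\Z}$-\emph{equivariant} identification of the kernels, i.e.\ that the choices of topological generator of $\wh{\Z}$ can be matched up. The subtlety is that an isomorphism $\wh{\Z}\to\wh{\Z}$ is multiplication by some unit $u\in\wh{\Z}^\times$, so a priori the generator $t$ on one side corresponds to $t^u$ on the other, and $\phi_2^u$ rather than $\phi_2$ acts. However $\det(\phi_2^u)=(\det\phi_2)^u=\det\phi_2$ since $\det\phi_2=\pm 1$ is killed by the exponent $u$ acting on the $2$-torsion subgroup $\{\pm1\}$ of $\wh{\Z}^\times$ only if $u$ is odd, which it is, as $u\in\wh{\Z}^\times$ maps to a unit in $\Z/2$. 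So in fact the determinant is insensitive to this ambiguity, and the argument goes through. I would phrase the write-up so as to either avoid choosing generators (working instead with the ideal-theoretic or the module-up-to-the-$\wh{\Z}^\times$-twist description) or to note this oddness observation explicitly.
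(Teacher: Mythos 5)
Your proposal is correct and follows essentially the same route as the paper: both arguments read $\det\phi_i$ off the conjugation action of the quotient $\wh{\Z}$ on the abelianization of the canonical kernel in the sequence $(\dagger)$. The only difference is the final bookkeeping: the paper reduces mod $3$ and asks whether the image of $\wh{\Z}$ in $\GL(2,\Z/3)$ lies in $\SL(2,\Z/3)$, which makes the answer manifestly independent of any choice of topological generator, whereas you work in $\GL(2,\wh{\Z})$ and dispose of the generator ambiguity (the twist by a unit $u\in\wh{\Z}^\times$) by noting that units are odd, so $(\pm 1)^u=\pm 1$ — a valid, if slightly more laboriously phrased, way to reach the same conclusion.
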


\begin{proof} We use the shorthand $\G_i={\G_{\phi_i}}$, fix an identification $\wh\G_1=\wh\Gamma_2$,
and write $N$ for the kernel of the canonical map $\wh\G_i\to\wh\Z$. The canonical map  
$\G_1\to\G_1/[\G_1,\G_1]\G_1^3$ induces an epimorphism $N\to (\Z/3)^2$, whose restriction to $\G_2$
has kernel $[\G_2,\G_2]\G_2^3$. The action of $\wh\G_i$ by conjugation on $N$ induces a map $\wh\Z\to {\rm{GL}}(2,\Z/3)$,
with cyclic image generated by the reduction of $\phi_i$ for $i=1,2$. Thus $\det\phi_1=\det\phi_2$ is determined by whether the image
of $\wh\Z$ lies in ${\rm{SL}}(2,\Z/3)$ or not.
\end{proof}

\begin{lemma}\label{l:L1}
If $b_1(\G_{\phi_1})=1$ and $\wh{\G}_{\phi_1}\cong\wh{\Gamma}_{\phi_2}$,
 then $\wh{\G}_{\phi_1^r}\cong\wh{\Gamma}_{\phi_2^r}$ for all $r\ne 0$.
\end{lemma}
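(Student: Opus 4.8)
The plan is to reconstruct, from the profinite data alone, the subgroup of $\wh\G_{\phi_i}$ that plays the role of $\wh{\G_{\phi_i^r}}$, and then compare. First I would note that if $b_1(\G_{\phi_1})=1$ then by Lemma \ref{l:abel} also $b_1(\G_{\phi_2})=1$, so each $\G_{\phi_i}$ carries a canonical map $\G_{\phi_i}\to\Z$ with kernel $F_r$, inducing the short exact sequence \eqref{equ}, i.e. $1\to\wh{F_r}\to\wh\G_{\phi_i}\to\wh\Z\to 1$, with the copy of $\wh{F_r}$ identified as the kernel $N$ of the canonical map $\wh\G_{\phi_i}\to\wh\Z$ (using Lemma \ref{l:full}). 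Fix an identification $\wh\G_{\phi_1}=\wh\G_{\phi_2}$; then the two subgroups $N$ coincide, since the canonical map to $\wh\Z$ is intrinsic to $\wh\G$ when $b_1=1$.

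The key point is that $\G_{\phi_i^r}\le\G_{\phi_i}$ is exactly the preimage of $r\Z\le\Z$ under $\G_{\phi_i}\to\Z$, hence is a subgroup of finite index $r$, and it is normal with quotient $\Z/r$. By Lemma \ref{l:full} (or simply because finite-index subgroups inherit the full profinite topology), the closure of $\G_{\phi_i^r}$ in $\wh\G_{\phi_i}$ is naturally isomorphic to $\wh{\G_{\phi_i^r}}$, and it is precisely the preimage of the closed subgroup $r\wh\Z\le\wh\Z$ under the canonical map $\wh\G_{\phi_i}\to\wh\Z$. (Here I use that $\G_{\phi_i}\to\Z$ extends to $\wh\G_{\phi_i}\to\wh\Z$ and that the full preimage of $r\wh\Z$ is an open, hence closed, subgroup containing $\G_{\phi_i^r}$ densely — density because $\G_{\phi_i}$ is dense in $\wh\G_{\phi_i}$ and the preimage of $r\Z$ is dense in the preimage of $r\wh\Z$.) Since, under the fixed identification $\wh\G_{\phi_1}=\wh\G_{\phi_2}$, both canonical maps to $\wh\Z$ agree, the preimages of $r\wh\Z$ agree as subgroups of the common profinite group. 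Therefore $\wh{\G_{\phi_1^r}}\cong\overline{\G_{\phi_1^r}}=\overline{\G_{\phi_2^r}}\cong\wh{\G_{\phi_2^r}}$, which is the desired conclusion; the case $r<0$ is identical since $r\Z=(-r)\Z$ and $\G_{\phi^r}=\G_{\phi^{-r}}$.

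The only subtlety — the step I expect to take the most care — is verifying that the canonical map $\wh\G_{\phi_i}\to\wh\Z$ really is independent of presentation, i.e. that it depends only on the isomorphism type of $\wh\G_{\phi_i}$ and not on the chosen surjection $\G_{\phi_i}\epi\Z$. This is exactly where $b_1=1$ is used: $H^1(\G_{\phi_i},\Z)\cong\Z$, so there are only two surjections $\G_{\phi_i}\epi\Z$ up to sign, they have the same kernel, and by the discussion preceding \eqref{equ} the kernel of $\wh\G_{\phi_i}\to\wh\Z$ is the closure $\-N$ of that common kernel; thus the map $\wh\G_{\phi_i}\to\wh\Z$ is canonical up to the automorphism $-1$ of $\wh\Z$, which is harmless for the argument above. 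Everything else is a routine manipulation of open subgroups and closures in profinite groups, using that a dense subgroup of an open subgroup has closure equal to that open subgroup.
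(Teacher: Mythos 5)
Your proof is correct and takes essentially the same route as the paper: both identify $\wh{\G_{\phi_i^r}}$ with the preimage of $r\wh\Z$ under the canonical map $\wh\G_{\phi_1}=\wh\G_{\phi_2}\to\wh\Z$ (the paper phrases this as the kernel of the composite with $\wh\Z\to\Z/r$), recognise it as the closure of the index-$r$ subgroup $F\rtimes_{\phi_i}r\Z\cong\G_{\phi_i^r}$, and invoke the fact that finite-index subgroups inherit the full profinite topology. Your extra care about the canonicity (up to an automorphism of $\wh\Z$) of the map to $\wh\Z$ when $b_1=1$ is exactly the point the paper relies on via the uniqueness of $N$ in the discussion preceding ($\dagger$), and it is harmless for the argument since $r\wh\Z$ is the unique open subgroup of index $r$.
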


\begin{proof} $\G_{\phi^r}\cong \G_{\phi^{-r}}$, so assume $r>0$. We again
consider the canonical map from $\wh{\G}_1=\wh{\Gamma}_2$
to $\wh\Z$. Let $N_r$ be the kernel of the composition of this map and 
$\wh\Z\rightarrow \Z/r$. Then $N_r$ is the closure of $F_2\rtimes_{\phi_i}r\Z <\G_{\phi_i}$
for $i=1,2$. Thus $\wh\G_{\phi_1^r}\cong N_r \cong \wh\Gamma_{\phi_2^r}$.
(Here we have used the fact that the profinite topology on any finitely generated group induces the full profinite topology on any subgroup of finite index.)
\end{proof}

\begin{lemma}\label{l:torsion}${}$
\begin{enumerate}
\item  $b_1(\Gp)=1$ if and only if $1 + \det \phi \neq  \tr \phi$.
\item If $b_1(\Gp)=1$ then $H_1(\Gp,\Z)\cong\Z\oplus T$, where $|T|= |1+\det\phi - \tr\phi|$.
\end{enumerate}
\end{lemma}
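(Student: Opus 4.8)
The plan is to compute $H_1(\Gp,\Z)$ directly from a presentation. Write $\G_\phi = F_2\rtimes_\phi\Z = \langle a,b,t \mid tat^{-1}=\phi(a),\ tbt^{-1}=\phi(b)\rangle$. Abelianising, the generator $t$ survives and contributes a free $\Z$ summand, while the images $\bar a,\bar b$ of $a,b$ in $H_1(F_2,\Z)\cong\Z^2$ are subject to the relations $t\bar x t^{-1} = \phi(\bar x)$; since conjugation is trivial in the abelianisation, these become $\phi(\bar x) = \bar x$, i.e. $(\phi - I)v = 0$ for $v\in\Z^2$. Hence $H_1(\Gp,\Z) \cong \Z \oplus \bigl(\Z^2/(\phi - I)\Z^2\bigr)$, where here $\phi$ denotes the image in $\mathrm{GL}(2,\Z)$ and the quotient is independent of the representative chosen because inner automorphisms act trivially on $H_1(F_2,\Z)$.

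From this it is a routine matter of linear algebra over $\Z$ to read off both statements. The finitely generated abelian group $A := \Z^2/(\phi-I)\Z^2$ is finite if and only if $\det(\phi - I)\neq 0$, in which case $|A| = |\det(\phi-I)|$ and $A$ is a finite abelian group of rank $\le 2$, giving $H_1(\Gp,\Z)\cong \Z\oplus T$ with $T=A$; if $\det(\phi-I)=0$ then $A$ has positive rank and so does $H_1(\Gp,\Z)$, forcing $b_1(\Gp)\ge 2$. Thus $b_1(\Gp)=1$ precisely when $\det(\phi-I)\neq 0$. Finally I would expand $\det(\phi - I)$: for a $2\times 2$ matrix $\phi$ one has $\det(\phi - I) = \det\phi - \tr\phi + 1$, by the characteristic polynomial identity $\det(\phi - \lambda I) = \lambda^2 - (\tr\phi)\lambda + \det\phi$ evaluated at $\lambda = 1$. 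Therefore $b_1(\Gp)=1 \iff 1+\det\phi - \tr\phi \neq 0 \iff 1+\det\phi\neq\tr\phi$, which is statement (1); and when this holds, $|T| = |\det(\phi-I)| = |1+\det\phi - \tr\phi|$, which is statement (2).

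There is essentially no serious obstacle here — the only point requiring a word of care is the assertion that $\G_\phi$ depends only on the outer class of $\phi$, hence that $H_1(F_2,\Z)$ carries a well-defined $\phi$-action through $\out(F_2)=\mathrm{GL}(2,\Z)$; this was already recorded in the discussion preceding the statement, so I would simply cite it. The rest is the standard computation of the abelianisation of a semidirect product $N\rtimes\Z$ as $\Z\oplus (N_{\ab})_{\Z}$, the coinvariants of the $\Z$-action, specialised to $N=F_2$.
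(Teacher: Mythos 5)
Your proposal is correct and follows essentially the same route as the paper: both take the presentation $\< a,b,t \mid tat^{-1}=\phi_*(a),\, tbt^{-1}=\phi_*(b)\>$, abelianise to get $\Z\oplus\Z^2/(\phi-I)\Z^2$, and evaluate $\det(\phi-I)=1-\tr\phi+\det\phi$ to conclude both statements. The only difference is that you spell out the characteristic-polynomial identity and the well-definedness under inner automorphisms, which the paper leaves implicit.
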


\begin{proof} By choosing a representative $\phi_*\in {\rm{Aut}}(F_2)$, we get a presentation for $\Gp$,
$$
\< a,b, t \mid tat^{-1} = \phi_*(a),\, tbt^{-1}=\phi_*(b)\>.
$$
By abelianising, we see that $H_1(\Gp,\Z)$ is the direct sum of $\Z$ (generated by the image of $t$)
and $\Z^2$ modulo the image of $\phi-I$. The image of $\phi-I$ has  finite index if and only if $\det(\phi-I)$
is non-zero, and a trivial calculation shows that this determinant is $1 - \tr\phi + \det\phi$. If the index is
finite, then the quotient has order $|\det(\phi - I)|$.
\end{proof}
 
\begin{corollary}\label{c:c1} $b_1(\Gamma_{\phi^r})=1$ for all $r\neq 0$ if and only if $\phi$ is hyperbolic.
\end{corollary}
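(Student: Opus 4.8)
The plan is to reduce the statement to an elementary fact about the eigenvalues of an element of $\GL(2,\Z)$, with Lemma \ref{l:torsion}(1) serving as the bridge between the group theory and the linear algebra.

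First I would record the reformulation. Applying Lemma \ref{l:torsion}(1) with $\phi^r$ in place of $\phi$ gives $b_1(\Gamma_{\phi^r})=1$ if and only if $1+\det(\phi^r)\neq\tr(\phi^r)$. Since $\det(A-I)=\det A-\tr A+1$ for any $2\times2$ matrix $A$, this condition is exactly $\det(\phi^r-I)\neq 0$, i.e. $1$ is not an eigenvalue of $\phi^r$. Thus the corollary is equivalent to the assertion: \emph{$\phi$ is hyperbolic if and only if no power $\phi^r$ with $r\neq 0$ has $1$ as an eigenvalue.}

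For the forward implication, suppose $\phi$ is hyperbolic with eigenvalues $\lambda,\mu$. Then $|\lambda|>1$, and since $\lambda\mu=\det\phi=\pm1$ we have $\mu\neq0$ and $|\mu|=|\lambda|^{-1}<1$. For any $r\neq 0$ the eigenvalues of $\phi^r$ are $\lambda^r,\mu^r$; one of $|\lambda^r|,|\mu^r|$ is $>1$ and the other is $<1$ (according to the sign of $r$), so in particular neither equals $1$, and hence $b_1(\Gamma_{\phi^r})=1$. For the converse I would argue by contrapositive, using the trichotomy elliptic/parabolic/hyperbolic. If $\phi$ is elliptic then $\phi^n=I$ for some $n>0$, so $1$ is an eigenvalue of $\phi^n$. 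If $\phi$ is parabolic, a short inspection of the integral characteristic polynomial $x^2-(\tr\phi)x+\det\phi$ shows that necessarily $\det\phi=1$ and $\tr\phi=\pm2$: for $\det\phi=1$ this is the remark in the text that an infinite-order element of $\SL(2,\Z)$ is hyperbolic exactly when $|\tr|>2$, while for $\det\phi=-1$ the two eigenvalues are real, distinct, and of product $-1$, so one has absolute value $>1$ unless $\tr\phi=0$ — but that case is the order-two (elliptic) case. When $\tr\phi=2$, $1$ is an eigenvalue of $\phi$ itself; when $\tr\phi=-2$, the eigenvalues of $\phi$ are both $-1$, so $1$ is an eigenvalue of $\phi^2$. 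In every non-hyperbolic case we have produced $r\neq0$ with $1+\det(\phi^r)=\tr(\phi^r)$, i.e. $b_1(\Gamma_{\phi^r})\neq1$, completing the contrapositive.

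I do not expect a genuine obstacle here; the statement is essentially a translation of Lemma \ref{l:torsion}(1) into eigenvalue language followed by a routine argument. The only point that needs a little care is the bookkeeping in the elliptic/parabolic analysis — in particular confirming that a non-hyperbolic element of infinite order in $\GL(2,\Z)$ really is parabolic with $\det=1$ and $\tr=\pm2$, which is where the $\det=-1$ subcase has to be checked by hand rather than quoted.
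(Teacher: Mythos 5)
Your proof is correct and follows essentially the same route as the paper: both reduce to Lemma \ref{l:torsion}(1) and then run through the elliptic/parabolic/hyperbolic trichotomy, your ``$1$ is an eigenvalue of $\phi^r$'' reformulation being just a repackaging of the condition $1+\det\phi^r=\tr\phi^r$. The only cosmetic difference is in the parabolic case, where the paper invokes the conjugacy of a suitable power to $\begin{pmatrix} 1& n\cr 0& 1\end{pmatrix}$ (giving $b_1=2$ explicitly), while you simply exhibit $1$ as an eigenvalue of $\phi$ or $\phi^2$.
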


\begin{proof} 
If $\phi$ is hyperbolic, then $\phi^r$ is hyperbolic for
  all $r\neq 0$; in particular  $\tr\phi^r \neq 0$, and $|\tr\phi^r |>2$ if $\det \phi^r=1$.
Thus the result follows from
  Lemma \ref{l:torsion}(1),  in the  hyperbolic case.  If $\phi$ is elliptic,
  then $\phi^r=I$ for some $r$, whence $b_1(\Gamma_{\phi^r})=3$.  If
  $\phi$ is parabolic, then there exists $r>0$ 
so that $\phi^r$ had determinant $1$ and is conjugate
in $\GL(2,\Z)$ to 
$\begin{pmatrix} 1& n\cr 0& 1\cr\end{pmatrix}$ with $n>0$. In particular,
$b_1(\Gamma_{\phi^r})=2$. 
\end{proof}
  
\noindent{\bf{ Proof of Proposition \ref{p:unique}.}} Lemma \ref{l:L1} and Corollary \ref{c:c1} imply that
$\G_{\phi_2}$ is hyperbolic, Lemma \ref{l:det} shows that $\det \phi_1=\det \phi_2$, and then
Lemma \ref{l:torsion}(2) implies that $\tr\phi_1=\tr\phi_2$ (since $H_1(\Gamma_{\phi_1},\Z)\cong H_1(\Gamma_{\phi_2},\Z)$).
\qed

\begin{remark}\label{r:torus}
So far, the only quotients that we have used to explore the profinite completion of
$\Gp$ are the abelian quotients of $F_2\rtimes_{\phi^r}\Z$. Since these all factor through $A_{\phi^r}=\Z^2\rtimes_{\phi^r}\Z$, we were
actually extracting information about $\phi$ from the groups $\wh{A}_{\phi^r}$. Up to isomorphism, 
such a completion $\wh{A}_{\psi}$ 
is determined by the {\em local} conjugacy class of $\psi$, meaning $\wh{A}_{\psi}\cong\wh{A}_{\psi'}$  if and 
only if the image of $\psi$ is conjugate to the image of  $\psi'$  in ${\rm{GL}}(2,\Z/m)$ for all integers $m>1$. 
Local conjugacy does not imply that $\psi$ and $\psi'$ are conjugate in ${\rm{GL}}(2,\Z)$: Stebe \cite{stebe}
proved that 
$$
\begin{pmatrix} 188& 275\cr 121& 177\cr\end{pmatrix}\ \ \hbox{ and } \ \ \ \ \begin{pmatrix} 188& 11\cr 3025& 177\cr\end{pmatrix}
$$
have this property 
and Funar \cite{Fu} described infinitely many such pairs. The corresponding torus bundles over the circle provide pairs of ${\rm{Sol}}$ manifolds 
whose fundamental groups are not isomorphic but have the same profinite completion \cite{Fu}.
In a forthcoming article with Henry Wilton we shall prove that, in contrast,  {\em{punctured}} torus bundles over the circle are determined
up to homeomorphism by the profinite completions of their fundamental groups.  
\end{remark}

\begin{remark} \label{r:cft}
The conclusion of Proposition \ref{p:unique} could also be phrased as saying that $\phi_1$ and $\phi_2$ lie in the same conjugacy class
in ${\rm{GL}}(2,\Q)$. When intersected with ${\rm{GL}}(2,\Z)$, this will break into a finite number of conjugacy
classes; how many can be determined using  class field theory \cite{tt}. 
\end{remark}

\subsection{The figure-eight knot}

 For small examples, one can calculate the conjugacy classes in ${\rm{GL}}(2,\Z)$ with a given trace and determinant by hand. For example, up to conjugacy
in ${\rm{GL}}(2,\Z)$ the only matrix with trace $3$ and determinant $1$ is $\begin{pmatrix} 2& 1\cr 1& 1\cr\end{pmatrix}$.
This calculation yields the following consequence of  Proposition \ref{p:unique} and Lemma \ref{l:sameHat}.

We retain the notation established in the introduction: $\K$ is the
figure-eight knot and $\Ge=\pi_1(\Sh^3\ssm\K)$.

\begin{proposition}\label{p:8unique} Let $\G=F\rtimes_\phi\Z$, where $F$ finitely generated and free.
 If $\wh{\G}\cong\wh\Ge$, then $F$ has rank two, $\G\cong\Ge$,  and $\phi$ is conjugate to 
 $\begin{pmatrix} 2& 1\cr 1& 1\cr\end{pmatrix}$ in any identification 
of ${\rm{Out}}(F)$ with ${\rm{GL}}(2,\Z)$.
\end{proposition}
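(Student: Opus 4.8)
The plan is to combine Proposition~\ref{p:unique} with the structural lemmas of \S\ref{s:fibres}, the one genuinely new point being to first identify the rank of $F$. Throughout, write $\phi_0=\begin{pmatrix} 2& 1\cr 1& 1\cr\end{pmatrix}$, so that $\Ge\cong F_2\rtimes_{\phi_0}\Z$; note that $\phi_0$ is hyperbolic, having infinite order in $\SL(2,\Z)$ with $|\tr(\phi_0)|=3>2$.

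First I would show that $F$ is free of rank two. Since $\Ge$ is a knot group we have $b_1(\Ge)=1$ (equivalently $1+\det\phi_0-\tr\phi_0=-1\neq 0$, so $H_1(\Ge,\Z)\cong\Z$ by Lemma~\ref{l:torsion}). Apply Lemma~\ref{l:sameHat} with $\G_1=\Ge=F_2\rtimes\Z$ (which has $b_1=1$) and $\G_2=\G=F\rtimes\Z$: after fixing an identification $\wh\Ge=\wh\G$, both $F_2$ and $F$ are dense in the kernel of the canonical map to $\wh\Z$, and by Lemma~\ref{l:full} this kernel is isomorphic to $\wh{F_2}$ and to $\wh F$; hence $\wh{F}\cong\wh{F_2}$. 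Lemma~\ref{l:abel} then gives $H_1(F,\Z)\cong H_1(F_2,\Z)\cong\Z^2$, and since $F$ is free this forces $F\cong F_2$. (Lemma~\ref{l:abel} applied to $\G$ and $\Ge$ also records that $b_1(\G)=1$.)

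Next I would apply Proposition~\ref{p:unique}. Having $F\cong F_2$, we may write $\G=\G_\phi$ in the notation of \S\ref{s:fibres}, with $\phi$ determined up to conjugacy in $\out(F_2)=\GL(2,\Z)$; note that the trace and determinant of $\phi$ are intrinsic, being those of its action on $H_1(F_2,\Z)\cong\Z^2$. Since $\wh\G_\phi\cong\wh\G_{\phi_0}$ and $\phi_0$ is hyperbolic, Proposition~\ref{p:unique} gives that $\phi$ is hyperbolic with the same eigenvalues as $\phi_0$, i.e.\ $\det\phi=1$ and $\tr\phi=3$. Finally, by the elementary calculation recorded just before the statement, the only conjugacy class in $\GL(2,\Z)$ with trace $3$ and determinant $1$ is that of $\phi_0$; hence $\phi$ is conjugate to $\phi_0$ in $\out(F_2)$, and therefore under any identification of $\out(F)$ with $\GL(2,\Z)$. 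Since the isomorphism type of $F_2\rtimes_\phi\Z$ depends only on the conjugacy class of $\phi$ in $\out(F_2)$, we conclude $\G\cong\Ge$.

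Beyond invoking Proposition~\ref{p:unique}, the argument is essentially bookkeeping, so there is no serious obstacle; the two points that need a little care are ensuring in the first step that the kernel of $\wh\G\to\wh\Z$ is canonical (which is exactly what makes Lemma~\ref{l:sameHat} applicable and identifies $\wh F$ with $\wh{F_2}$), and the elementary but non-automatic classification of integral $2\times2$ conjugacy classes of given trace and determinant used at the end --- for larger traces this is precisely where the class field theory of Remark~\ref{r:cft} would be needed.
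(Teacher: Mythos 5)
Your proof is correct and follows essentially the same route as the paper: the paper likewise deduces Proposition~\ref{p:8unique} from Lemma~\ref{l:sameHat} (to pin down $\wh F\cong\wh{F_2}$ and hence the rank of $F$), Proposition~\ref{p:unique} (to force $\det\phi=1$, $\tr\phi=3$), and the hand calculation that the unique $\GL(2,\Z)$-conjugacy class with trace $3$ and determinant $1$ is that of $\begin{pmatrix} 2& 1\cr 1& 1\cr\end{pmatrix}$. Your write-up merely makes explicit the bookkeeping the paper leaves to the reader.
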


\subsection{Uniqueness for the trefoil knot and the Gieseking manifold}

From Lemma \ref{l:torsion} we know that  
the only monodromies $\phi\in {\rm{GL}}(2,\Z)$ for which $H_1(\G_\phi,\Z)\cong\Z$ are those
 for which $(\tr(\phi), \det \phi)$
 is one of $(1,1),\ (1,-1),\, (3,1)$. We have already discussed how 
 the last possibility determines the figure-eight knot. Each of the other possibilities
 also determines a unique conjugacy class in ${\rm{GL}}(2,\Z)$,
 represented by $$
\begin{pmatrix} 1 & -1\cr 1& 0\cr \end{pmatrix}\ \ \ \ \hbox{and}\ \ \ \ 
\begin{pmatrix} 1 & 1\cr 1& 0\cr \end{pmatrix},
$$
respectively. The punctured-torus bundle with the first monodromy is the complement of the trefoil knot, and
the second monodromy produces the Gieseking manifold, which is the unique non-orientable 3-manifold whose
oriented double cover is the complement of the figure-eight knot.

Note that the first matrix is elliptic while the second is hyperbolic. 
The following proposition records two consequences of this discussion. For item (2) we
need to appeal to Lemma \ref{l:sameHat}.

\begin{proposition}\label{p:trefoil-v-FZ}
\begin{enumerate}
\item
The only groups of the form $\Lambda=F_2\rtimes\Z$ with $H_1(\Lambda,\Z)=\Z$ are the fundamental
groups of (i) the figure-eight knot complement, (ii) the trefoil knot, and (iii) the  Gieseking manifold.
\item
Let $\Lambda$ be one of these three groups and let $F$ be a free group.
If $\G=F\rtimes\Z$ and $\wh\G\cong\wh\Lambda$, then $\G\cong\Lambda$.
\end{enumerate}
\end{proposition}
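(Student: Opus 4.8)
The plan is to reduce Proposition \ref{p:trefoil-v-FZ} to the machinery already assembled. For part (1), I would simply combine Lemma \ref{l:torsion}(1) with a hand computation of conjugacy classes in ${\rm{GL}}(2,\Z)$: the condition $H_1(\G_\phi,\Z)=\Z$ is exactly $|1+\det\phi-\tr\phi|=1$, and running through the sign cases one finds the only possibilities for $(\tr\phi,\det\phi)$ are $(1,1)$, $(1,-1)$, $(3,1)$, each of which is realised by a single conjugacy class in ${\rm{GL}}(2,\Z)$, namely the three matrices displayed in the text. Identifying the resulting punctured-torus bundles with the figure-eight complement, the trefoil complement, and the Gieseking manifold finishes (1); these identifications are standard and can be cited. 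The elliptic-versus-hyperbolic dichotomy (the trefoil monodromy has trace $1$, hence finite order, while the other two are hyperbolic) is the feature that will drive the uniqueness in (2).

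For part (2), suppose $\G=F\rtimes\Z$ with $\wh\G\cong\wh\Lambda$ where $\Lambda$ is one of the three groups. By Lemma \ref{l:abel} we get $H_1(\G,\Z)\cong H_1(\Lambda,\Z)=\Z$, so $b_1(\G)=1$; since $F$ is free and normal of infinite-cyclic quotient, $F$ is the kernel of the unique map $\G\to\Z$, hence $\wh\G\to\wh\Z$ has kernel $\overline F\cong\wh F$ by Lemma \ref{l:full}. Now I want to pin down the rank of $F$. The profinite completion controls $b_1$ of finite-index subgroups, and for a fibred $3$-manifold group the fibre rank is visible from $H_1$ of cyclic covers; more directly, applying Lemma \ref{l:L1}-style reasoning one sees $\wh{F\rtimes_{\phi^r}\Z}\cong\wh{F_2\rtimes_{\psi^r}\Z}$ for all $r$, where $\psi$ is the monodromy of $\Lambda$, and comparing $H_1$ via Lemma \ref{l:torsion}(2) forces the relevant traces/determinants to agree. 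The cleanest route is: $\wh F\cong\wh{F_2}$, and since free groups are good of cohomological dimension $1$ while higher-rank free groups are distinguished by the abelianization of finite-index subgroups (or just by $b_1$ of the completion's open subgroups), $F$ has rank $2$. So $\G=F_2\rtimes_\phi\Z$ and, by part (1) applied to $\G$ (whose $H_1$ is $\Z$), $\G$ is already one of the three groups.

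It then remains to rule out the two ``wrong'' identifications in each case, i.e.\ to show the three completions $\wh\Lambda$ are pairwise non-isomorphic. Here Proposition \ref{p:unique} does most of the work: if $\wh{\G_{\phi_1}}\cong\wh{\G_{\phi_2}}$ and $\phi_1$ is hyperbolic then $\phi_2$ is hyperbolic with the same trace and determinant. The trefoil monodromy $\left(\begin{smallmatrix}1&-1\\1&0\end{smallmatrix}\right)$ is elliptic, so its completion cannot match either of the hyperbolic ones; and the figure-eight monodromy has $(\tr,\det)=(3,1)$ while the Gieseking monodromy $\left(\begin{smallmatrix}1&1\\1&0\end{smallmatrix}\right)$ has $(\tr,\det)=(1,-1)$, so Proposition \ref{p:unique} separates those two as well. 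Invoking Lemma \ref{l:sameHat} to ensure that the isomorphism $\wh\G\cong\wh\Lambda$ restricts to $\wh F\cong\wh{F_2}$ on the fibre, and hence that the monodromies are being compared correctly via Proposition \ref{p:unique}, we conclude $\G\cong\Lambda$.

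The main obstacle I anticipate is not the conjugacy-class bookkeeping (that is routine) but the step identifying the \emph{rank} of $F$: a priori $\wh F\cong\wh{F_2}$, and one must argue that this forces $F$ to be free of rank exactly $2$ rather than some other group with the same completion. Fortunately $F$ is \emph{given} to be free, and finitely generated free groups of distinct ranks have non-isomorphic completions (they have distinct $H_1$), so this reduces to checking $H_1(F,\Z)\cong\Z^2$; the latter follows from $b_1(\G)=1$ together with the structure of $H_1$ of an ${F\rtimes\Z}$ extension as in Lemma \ref{l:torsion}. So the ``obstacle'' dissolves once one uses the hypothesis that $F$ is free, and the proof is essentially an assembly of Lemmas \ref{l:abel}, \ref{l:full}, \ref{l:sameHat}, \ref{l:torsion} and Proposition \ref{p:unique}.
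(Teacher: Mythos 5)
Your proposal is correct and follows essentially the same route as the paper: part (1) is the Lemma \ref{l:torsion} trace--determinant computation plus the enumeration of ${\rm GL}(2,\Z)$ conjugacy classes, and part (2) is exactly the intended assembly of Lemma \ref{l:abel}, Lemma \ref{l:sameHat} (to pin the fibre down to $\wh{F}\cong\wh{F_2}$, hence rank $2$), part (1), and Proposition \ref{p:unique} with the elliptic/hyperbolic and $(\tr,\det)$ dichotomies to separate the three groups. The only cosmetic remark is that the sign analysis also allows $(\tr,\det)=(-1,-1)$, which is the inverse of the Gieseking monodromy and so gives the same group -- the paper elides this in the same way.
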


\begin{example} Instead of appealing to the general considerations in this section, one can distinguish the
profinite completions of the groups for the figure-eight knot and the trefoil knot directly. Indeed, 
setting $y^2=1$ in the standard presentation 
$$
\Pi = \< x, y\mid yxy^{-1}xy=xyx^{-1}yx   \>
$$
one sees that $\Pi$ maps onto the dihedral group $D_{10}$. But $T$, the fundamental group of the trefoil
knot, cannot map onto $D_{10}$, because $D_{10}$ is centreless and has no elements of order $3$, whereas
$T=\<a,b\mid a^2=b^3\>$ is a central extension of $\Z/2\ast \Z/3$.
\end{example}

\subsection{Finite Ambiguity}

We do not know if all free-by-cyclic groups can be  distinguished from one another by their profinite completions
(cf.~Remark \ref{r:torus}), but the analysis in the previous subsection enables us to show that the ambiguity 
in the case when the free group has rank $2$ is at worst finite.

\begin{proposition}\label{p:finite} For every $\phi\in{\rm{GL}}(2,\Z)$, there exist only finitely many
conjugacy classes $[\psi]$ in ${\rm{GL}}(2,\Z)$ such that $\wh{\G}_\phi\cong \wh\G_\psi$.
Moreover, all such $\psi$ are of the same type, hyperbolic, parabolic, or elliptic.
\end{proposition}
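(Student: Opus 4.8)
The plan is to argue according to the type of $\phi$. The hyperbolic case is immediate from Proposition~\ref{p:unique}: if $\phi$ is hyperbolic then every $\psi$ with $\wh{\G}_\phi\cong\wh{\G}_\psi$ is hyperbolic with $\tr\psi=\tr\phi$ and $\det\psi=\det\phi$, and there are only finitely many conjugacy classes in ${\rm{GL}}(2,\Z)$ with a prescribed trace and determinant (Remark~\ref{r:cft}, cf.~\cite{tt}). So assume henceforth that $\phi$ has finite order or is parabolic.

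The main tool for the rest is the invariant $r\mapsto b_1(\G_{\phi^r})$. By Lemma~\ref{l:torsion}(1), a hyperbolic element automatically has $b_1(\G_\phi)=1$ (it has no eigenvalue equal to $1$), and so do the parabolic $\phi$ with $\tr\phi=-2$ and the elliptic $\phi$ of order $3$, $4$, $6$ or equal to $-I$. Whenever $b_1(\G_\phi)=1$, Lemma~\ref{l:L1} gives $\wh{\G}_{\phi^r}\cong\wh{\G}_{\psi^r}$ for all $r\neq 0$, so $b_1(\G_{\phi^r})=b_1(\G_{\psi^r})$ for every $r$ by Lemma~\ref{l:abel}; and by Corollary~\ref{c:c1} and Lemma~\ref{l:torsion}, $\sup_r b_1(\G_{\phi^r})$ equals $1$, $2$ or $3$ precisely when $\phi$ is hyperbolic, parabolic or elliptic, respectively. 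Hence $\psi$ has the same type as $\phi$ in all of these cases. If moreover $\phi$ is parabolic with $\tr\phi=-2$, so that $\phi$ is conjugate to $-\begin{pmatrix} 1 & n \\ 0 & 1 \end{pmatrix}$, then $\psi$ must be conjugate to $-\begin{pmatrix} 1 & m \\ 0 & 1 \end{pmatrix}$, and comparing $H_1(\G_{\phi^2})$ with $H_1(\G_{\psi^2})$ (via Lemma~\ref{l:L1} with $r=2$ and the homology computation in the proof of Lemma~\ref{l:torsion}) forces $n=m$; likewise, since an elliptic conjugacy class in ${\rm{GL}}(2,\Z)$ is determined by its order together with $H_1(\G_\phi)$, there is a unique such $\psi$ in every elliptic case with $b_1(\G_\phi)=1$.

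It remains to treat $b_1(\G_\phi)\neq 1$. If $b_1(\G_\phi)=3$ then $\phi=I$, and since $b_1$ is a profinite invariant (Lemma~\ref{l:abel}), $b_1(\G_\psi)=3$ forces $\psi=I$. If $b_1(\G_\phi)=2$ then $\phi$ is either parabolic with $\tr\phi=2$, i.e.~conjugate to $U_n:=\begin{pmatrix} 1 & n \\ 0 & 1 \end{pmatrix}$ for some $n\geq 1$, or elliptic of order $2$ with $\det\phi=-1$; by Lemma~\ref{l:torsion}, $H_1(\G_{U_n})\cong\Z^2\oplus\Z/n$, whereas an order-two $\phi$ with $\det\phi=-1$ has $H_1(\G_\phi)\cong\Z^2$ or $\Z^2\oplus\Z/2$, there being only two such conjugacy classes. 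Because $H_1$ is a profinite invariant, because Proposition~\ref{p:unique} excludes hyperbolic $\psi$, and because $b_1(\G_\psi)=2$ excludes parabolic $\psi$ with $\tr\psi=-2$, it follows that at most finitely many conjugacy classes $\psi$ satisfy $\wh{\G}_\psi\cong\wh{\G}_\phi$; in fact the only coincidences not already excluded by $H_1$ are the one between $U_1$ and $\begin{pmatrix} 0 & 1 \\ 1 & 0 \end{pmatrix}$ and the one between $U_2$ and ${\rm{diag}}(1,-1)$. Thus finiteness holds in all cases, and the only remaining point in the assertion about types is to establish that $\wh{\G}_\phi\not\cong\wh{\G}_\psi$ when $\phi$ is conjugate to $U_1$ and $\psi$ to $\begin{pmatrix} 0 & 1 \\ 1 & 0 \end{pmatrix}$, and when $\phi$ is conjugate to $U_2$ and $\psi$ to ${\rm{diag}}(1,-1)$.

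I expect this last step to be the main obstacle. The groups $\G_\psi$ with $\psi$ conjugate to $\begin{pmatrix} 0 & 1 \\ 1 & 0 \end{pmatrix}$ or to ${\rm{diag}}(1,-1)$ are virtually $F_2\times\Z$ and so have nontrivial centre, whereas $\G_{U_1}$ and $\G_{U_2}$ are fundamental groups of non-trivial graph manifolds whose monodromy has infinite order — as does its restriction to every finite cover — so that no finite-index subgroup of them has nontrivial centre. The plan is to recover from $\wh{\G}_\phi$ the profinite completion $\wh{A}_\phi:=\wh{\Z^2\rtimes_\phi\Z}$ of the abelianised mapping torus, by showing that the fibre subgroup $\overline{F_2}$ is still canonical — namely it is the closure of the kernel of the unique continuous epimorphism $\wh{\G}_\phi\to\wh{\Z}$ whose kernel is topologically finitely generated. (When $b_1(\G_\phi)=2$ this requires checking that the kernels of the remaining continuous epimorphisms $\wh{\G}_\phi\to\wh{\Z}$ are not topologically finitely generated, for instance by computing $H^1(-;\F_p)$.) Granting this, $\overline{[F_2,F_2]}$, and hence $\wh{A}_\phi=\wh{\G}_\phi/\overline{[F_2,F_2]}$, are determined by $\wh{\G}_\phi$; one then separates the two pairs above by noting that the matrices in question, reduced modulo $3$, have different orders, hence are not conjugate in ${\rm{GL}}(2,\Z/3)$, so the corresponding completions $\wh{A}_\phi$ are non-isomorphic by the criterion recalled in Remark~\ref{r:torus}. (Alternatively, one may appeal to the profinite invariance of the Seifert pieces of the geometric decomposition, cf.~\cite{WZ}.) Once these two cases are disposed of, every $\psi$ with $\wh{\G}_\psi\cong\wh{\G}_\phi$ has the same type as $\phi$, and, together with the finiteness established above, this completes the proof.
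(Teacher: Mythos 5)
Up to the two exceptional coincidences, your reduction is the paper's argument: the hyperbolic case via Proposition \ref{p:unique} plus finiteness of integral classes with prescribed trace and determinant, detection of type via $b_1(\G_{\phi^r})$ using Lemmas \ref{l:abel}, \ref{l:L1}, \ref{l:torsion} and Corollary \ref{c:c1} when $b_1(\G_\phi)=1$, and the torsion of $H_1$ to pin down the trace-$2$ parabolics. In the $b_1=2$ analysis you are in fact more systematic than the paper: besides the coincidence of $H_1$ for $\begin{pmatrix}0&1\\ 1&0\end{pmatrix}$ and $\begin{pmatrix}1&1\\ 0&1\end{pmatrix}$, you isolate the second coincidence $\begin{pmatrix}1&0\\ 0&-1\end{pmatrix}$ versus $\begin{pmatrix}1&2\\ 0&1\end{pmatrix}$, which the paper's proof does not address. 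For its exceptional case the paper uses a far more elementary device than the one you propose: it passes to finite-index subgroups and compares first Betti numbers there ($b_1(\G_{\phi^2})=3$ for the elliptic matrix versus $b_1(\G_{\psi^2})=2$ for parabolics), rather than attempting to reconstruct the fibration inside the profinite completion.

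The genuine gap is in your final step, which you yourself flag as the main obstacle. Your plan rests on the claim that $\overline{F_2}$ is the kernel of the \emph{unique} continuous epimorphism $\wh{\G}_\phi\to\wh{\Z}$ with topologically finitely generated kernel, and this is false for exactly the groups at issue. Take $\phi$ of order two and choose $\phi_*$ with $\phi_*^2=\mathrm{id}$, so that $t^2$ is central. Any class $\xi\in H^1(\G_\phi,\Z)$ with $\xi(t)\neq 0$ other than the fibration class (for the swap matrix take $\xi(a)=\xi(b)=\xi(t)=1$; for $\mathrm{diag}(1,-1)$ take $\xi(a)=\xi(t)=1$, $\xi(b)=0$) has finitely generated kernel: since $\xi(t^2)\neq 0$, the intersection of $\ker\xi$ with the index-two subgroup $F_2\times\langle t^2\rangle$ projects isomorphically onto a finite-index subgroup of $F_2$. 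Writing $\G_\phi=\ker\xi\rtimes\Z$, Lemma \ref{l:full} converts each such splitting into a continuous epimorphism $\wh{\G}_\phi\to\wh{\Z}$ whose kernel is $\wh{\ker\xi}$, topologically finitely generated and distinct from $\overline{F_2}$ (it does not contain $a$). So uniqueness fails, the proposed $H^1(-;\F_p)$ check cannot succeed because those kernels genuinely are topologically finitely generated, and without a canonical fibre you cannot form $\wh{A}_\phi$ or run the mod-$3$ comparison: an isomorphism could match the punctured-torus fibration on one side with a different fibration on the other. The parenthetical appeal to \cite{WZ} is not a substitute, as the results cited there concern closed manifolds. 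To finish along the paper's lines, drop the reconstruction of the fibre and compare homology of finite-index subgroups instead; note, though, that this needs care, since an isomorphism $\wh{\G}_\phi\cong\wh{\G}_\psi$ need not match $\G_{\phi^2}$ with $\G_{\psi^2}$ when $b_1=2$ — one should compare all index-two subgroups at once (for the pair $\begin{pmatrix}0&1\\ 1&0\end{pmatrix}$, $\begin{pmatrix}1&1\\ 0&1\end{pmatrix}$ the multisets of first homology groups of the three index-two subgroups are $\{\Z^3,\Z^3,\Z^2\oplus\Z/2\}$ and $\{\Z^3,\Z^2\oplus\Z/2,\Z^2\oplus\Z/2\}$, which do differ), and a similar finite computation is then needed for your second pair.
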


\begin{proof} If $\phi$ is hyperbolic, this follows from 
Proposition \ref{p:unique}. If $\phi$ is
 parabolic then either it has trace $2$ and $b_1(\G_\phi)=2$, or trace $-2$ in which case $b_1(\G_\phi)=1$.
In the former case, $\phi$ is conjugate in ${\rm{GL}}(2,\Z)$ to 
$\begin{pmatrix} 1& n\cr 0& 1\cr\end{pmatrix}$ with $n>0$, and $H_1(\G_\phi,\Z)\cong\Z^2\oplus \Z/n$,
so $n$ is determined by $H_1(\G_\phi,\Z)$, hence by $\wh{\G}_\phi$.
In the case where the trace is $-2$, we have  $b_1(\G_\phi)=1$ and
Lemma \ref{l:L1} reduces us to the previous case.

For the elliptic case, since the possible orders are $2$, $3$, $4$ or $6$ and
there are only finitely many conjugacy classes in ${\rm{GL}}(2,\Z)$
of elements of such orders, we are reduced to distinguishing 
elliptics from non-elliptics by means of $\wh\G_\phi$.
If $\phi$ is elliptic and $b_1(\G_\phi)=1$, then Lemma \ref{l:L1} completes the proof, because for 
non-elliptics $b_1(\G_{\phi^r})$ is never greater than $2$, whereas for elliptics it becomes $3$. 
The only other possibility, up to conjugacy, is $\begin{pmatrix} 0& 1\cr 1& 0\cr\end{pmatrix}$,
which cannot be distinguished from parabolics such as
$\begin{pmatrix} 1& 1\cr 0& 1\cr\end{pmatrix}$ by means of 
$H_1(\G_\phi,\Z)$. However, these can be distinguished on passage to subgroups of
finite index, since for $\phi=\begin{pmatrix} 0& 1\cr 1& 0\cr\end{pmatrix}$,
$b_1(\G_{\phi^2})=3$, while for parabolics  ,
$b_1(\G_{\phi^2})=2$.  
\end{proof}
 
\subsection{Passing to finite-sheeted covers}

It is easy to see that a subgroup of finite index in a free-by-cyclic
group is itself free-by-cyclic.  Finite extensions of $F_r\rtimes\Z$,
even if they are torsion-free, will not be free-by-cyclic in general,
but in the setting of the following lemma one can prove something in
this direction.  This lemma is useful when one wants to prove that a
manifold is a punctured-torus bundle by studying finite-sheeted
coverings of the manifold.

\begin{lemma}\label{l:fi} Let $\Lambda$ be a torsion-free group and let $\G<\Lambda$ be a subgroup of index $d$.
Suppose that $b_1(\Lambda)=b_1(\G)=1$. 
If $\G\cong F_2\rtimes_\phi\Z$, then $\Lambda\cong F_2\rtimes_\psi\Z$, with $\phi=\psi^d$.
\end{lemma}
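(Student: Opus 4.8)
The plan is to locate inside $\Lambda$ the subgroup that must play the role of the fibre group of a would-be bundle structure, to show that this subgroup is free of rank two, and then to read off the monodromy from the way $\G$ sits in $\Lambda$.

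First, since $b_1(\Lambda)=1$ there is a unique normal subgroup $M\ns\Lambda$ with $\Lambda/M\cong\Z$; fix an epimorphism $p\colon\Lambda\epi\Z$ with kernel $M$. The restriction $p|_\G$ cannot be trivial, for otherwise $\G\subset M$ and $d=[\Lambda:\G]\ge[\Lambda:M]=\infty$. Hence $\G\cap M=\ker(p|_\G)$ is the kernel of an epimorphism from $\G$ onto an infinite cyclic group. But $b_1(\G)=1$, so $\Hom(\G,\Z)\cong\Z$ and all epimorphisms $\G\epi\Z$ have one and the same kernel; since the free subgroup $F_2$ of $\G\cong F_2\rtimes_\phi\Z$ is normal with quotient $\Z$, that kernel is $F_2$. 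Therefore $\G\cap M=F_2$, and the natural injection of left coset spaces $M/(M\cap\G)\hookrightarrow\Lambda/\G$ shows $[M:F_2]\le d$.

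Next I would argue that $M$ is free of rank two. It is torsion-free, being a subgroup of $\Lambda$, and it contains $F_2$ with finite index; by Serre's theorem that a torsion-free group has the same cohomological dimension as any of its finite-index subgroups, $\cd(M)=\cd(F_2)=1$, so $M$ is free by the Stallings--Swan theorem, and it is finitely generated because it contains a finitely generated subgroup of finite index. The Nielsen--Schreier index formula then reads $1=\rank(F_2)-1=[M:F_2]\,(\rank(M)-1)$, which forces $[M:F_2]=1$ and $\rank(M)=2$. Thus $M=F_2$ is a free normal subgroup of $\Lambda$ of rank two. I expect this to be the only non-formal step: concluding that the kernel of a map to $\Z$ is free is precisely where torsion-freeness of $\Lambda$ is used, via the combination of Serre's theorem with the Stallings--Swan characterisation of free groups; the index and the rank are then squeezed out of Nielsen--Schreier.

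Finally, since $\Lambda/M\cong\Z$ is free the extension $1\to M\to\Lambda\to\Z\to1$ splits, so $\Lambda\cong F_2\rtimes_\psi\Z$, where $\psi\in\out(F_2)\cong\GL(2,\Z)$ is the class of conjugation on $M$ by a lift $s\in\Lambda$ of a generator of $\Lambda/M$. Since $M\subset\G$ and $[\Lambda/M:\G/M]=[\Lambda:\G]=d$, the subgroup $\G/M$ is the index-$d$ subgroup of $\Lambda/M\cong\Z$, generated by the image of $s^{d}$; hence $\G=M\rtimes\<s^{d}\>$, and the monodromy of this decomposition is the class of conjugation by $s^{d}$, namely $\psi^{d}$. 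Comparing with $\G\cong F_2\rtimes_\phi\Z$, whose monodromy over $\G/F_2=\G/M$ is $\phi$ — and choosing the generator of $\Lambda/M$ compatibly with that of $\G/M$, the residual sign ambiguity being harmless because $F_2\rtimes_\alpha\Z\cong F_2\rtimes_{\alpha^{-1}}\Z$ — gives $\phi=\psi^{d}$ in $\GL(2,\Z)$.
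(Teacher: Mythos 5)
Your argument is correct and follows essentially the same route as the paper: identify the unique normal subgroups with infinite cyclic quotient in $\G$ and $\Lambda$, use torsion-freeness to show the latter is free and then force its index over $F_2$ to be $1$ via the Euler characteristic (Nielsen--Schreier) count, and finally read off the monodromy from the index-$d$ inclusion of the cyclic quotients. The paper compresses the middle step into the observation that $F_2$ cannot be a proper finite-index subgroup of a torsion-free group, which is exactly your Serre/Stallings--Swan plus index-formula argument spelled out.
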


\begin{proof} As $b_1(\G)=b_1(\Lambda)=1$, there are unique normal subgroups 
$I\subset \G$ and $J\subset \Lambda$ such that $\G/I\cong \Lambda/J\cong\Z$.
The image of $\G$ has finite index in $\Lambda/J$, so $I=J\cap\G$. But $I\cong F_2$, and $F_2$ is not a proper subgroup of finite index in any torsion-free
group (because such a group would be free with euler characteristic dividing $-\chi(F_2)=1$). Thus $I=J=F_2$, and the image of $\G/I$
in $\Lambda/J\cong \Z$ has index $d$.
\end{proof}

\section{Profinite completions of 3-manifold groups} 
We shall make use of several results about the profinite
completions of 3-manifold groups. We summarize these in the following theorem, and quote their origins in the proof.  
Note that in what follows
by the statement {\em a compact 3-manifold $M$ contains an incompressible Klein
bottle ${\bf K}$}, we shall 
mean that the induced map $\pi_1({\bf K})\hookrightarrow \pi_1(M)$ is injective.

\begin{theorem}
\label{summary}
Let $X$ be a compact connected 3-manifold.  Then:
\begin{enumerate}
\item If  $\widehat{\pi_1(X)}\cong \widehat{\pi_1(M)}$, then 
$H_1(X,\Z)\cong H_1(M,\Z)$.
\item $\pi_1(X)$ is good.
\item $b_1^{(2)}(X)=0$ if and only if $\pi_1(X)$ is virtually infinite cyclic 
or $X$ is aspherical (hence irreducible) and 
$\partial X$ consists of a (possibly empty) disjoint union of 
tori and Klein bottles.
\item If $X$ is closed
and $\G$ is either free-by-cyclic or else the fundamental group of a 
non-compact finite volume hyperbolic 3-manifold, then
$\widehat{\pi_1(X)}$ and $\widehat{\G}$ are not isomorphic.
\item Suppose that $X$ is a Seifert fibred space and that $\G$ is either  the fundamental group of a 
non-compact finite volume hyperbolic 3-manifold or else of the form $F_r\rtimes_\phi\Z$ where $r\geq 2$ and
 $[\phi]$ has infinite order in ${\rm{Out}}(F_r)$. Then
$\widehat{\pi_1(X)}$ and $\widehat{\G}$ are not isomorphic. 
\end{enumerate}
\end{theorem}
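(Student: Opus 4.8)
The plan is to assemble the five statements from known results, adding a little extra argument where needed; the write-up would be organised as follows. \emph{Items (1)--(3).} Part (1) is immediate from Lemma~\ref{l:abel}, since the fundamental group of a compact $3$-manifold is finitely presented. For part (2) I would use the Geometrization Theorem together with the permanence properties of goodness recorded by Serre \cite{Se} and already used in the proofs of Lemmas~\ref{l:extend} and~\ref{l:cd}: goodness passes to finite free products, to fundamental groups of finite graphs of good groups with good edge groups of type $\mathrm{FP}_\infty$, and to finite-index overgroups of good groups of type $\mathrm{FP}_\infty$. This reduces matters to the Seifert-fibred pieces -- central extensions of Fuchsian or surface groups by $\Z$, hence good -- and the atoroidal pieces, which virtually fibre over the circle by Agol's theorem \cite{Ag} (with \cite{Wi}) and so are virtually surface-by-cyclic; for the resulting statement I would simply cite \cite{AFW}. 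Part (3) is the Lott--L\"uck computation of the first $L^2$-Betti number of a compact $3$-manifold \cite{LL} (proved there modulo Geometrization, now a theorem); for the implications we actually use one can also argue directly that $b_1^{(2)}$ vanishes when $\pi_1(X)$ is virtually cyclic and, when $X$ is aspherical with non-empty toroidal or Klein-bottle boundary or has non-trivial JSJ decomposition, by applying Gaboriau's Proposition~\ref{p:gab} to a finitely generated normal subgroup with infinite quotient (the fibre of a Seifert piece, a JSJ torus group, or a cusp subgroup), cusped hyperbolic pieces contributing $0$ because their Euler characteristic vanishes.

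\emph{Item (4).} The tool here is the cohomological dimension of the profinite completion. If $\G=F\rtimes\Z$ is free-by-cyclic then $\cd(\wh{\G})=2$ by Lemma~\ref{l:extend}; if $\G=\pi_1(N)$ for a cusped finite-volume hyperbolic $3$-manifold $N$ then $\cd(\wh{\G})\le 2$ by goodness and Lemma~\ref{l:cd} (since $\cd(\pi_1 N)=2$), while $\cd(\wh{\G})\ge 2$ because a peripheral subgroup $\Z^2<\G$ is separable and carries the induced full profinite topology (LERF for such groups, \cite{Ag}, \cite{Wi}), so $\wh{\Z^2}$ embeds as a closed subgroup and Proposition~\ref{cdbound} applies. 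In both cases $\wh{\G}$ is infinite, torsion-free (a good group of finite cohomological dimension has torsion-free profinite completion, by Lemma~\ref{l:cd} and Proposition~\ref{cdbound}), with $\cd(\wh{\G})=2$. Now let $X$ be closed. If $\pi_1(X)$ has torsion then $\wh{\pi_1(X)}$ has torsion, so $\cd_p(\wh{\pi_1(X)})=\infty$ for some $p$; otherwise, by the Sphere Theorem, $\pi_1(X)$ is a free product of free groups and fundamental groups of closed aspherical $3$-manifolds, so, using goodness and Poincar\'e duality via Lemma~\ref{l:cd}, $\cd(\wh{\pi_1(X)})$ is $1$ when $\pi_1(X)$ is free, is at least $3$ when some prime summand is aspherical, and $\wh{\pi_1(X)}$ is finite when $\pi_1(X)$ is. In no case is $\wh{\pi_1(X)}$ an infinite torsion-free profinite group of cohomological dimension $2$, so $\wh{\pi_1(X)}\not\cong\wh{\G}$.

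\emph{Item (5).} When $X$ is closed this follows from the argument of item (4): a closed Seifert fibred space with infinite, non-virtually-cyclic fundamental group is a closed aspherical $3$-manifold, so $\cd(\wh{\pi_1(X)})=3\neq 2=\cd(\wh{\G})$, while if $\pi_1(X)$ is finite or virtually cyclic then $\wh{\pi_1(X)}$ is finite or has $\cd\leq 1$ (or $p$-torsion), in any case not isomorphic to $\wh{\G}$. So the substantive case is $X$ Seifert fibred with non-empty boundary and $\pi_1(X)$ infinite and not virtually cyclic. Then $\pi_1(X)$ has a finite-index subgroup isomorphic to $\Z\times F_k$ for some $k\ge 1$ (pass to a circle bundle over a surface with boundary, which is a product), so $\wh{\pi_1(X)}$ has an open subgroup with infinite centre. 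On the other hand, every finite-index subgroup of $\pi_1(N)$ ($N$ cusped hyperbolic) or of $F_r\rtimes_\phi\Z$ (with $[\phi]$ of infinite order in $\out(F_r)$) is again of the same kind and hence centreless, and I would show that this persists in the profinite completion: for the hyperbolic case this is part of the profinite rigidity package of Wilton--Zalesskii \cite{WZ} ($\wh{\pi_1(N)}$ has no non-trivial normal procyclic subgroup), and for the free-by-cyclic case one writes $\wh{F_s\rtimes_\psi\Z}=\wh{F_s}\rtimes\wh{\Z}$ (a split profinite extension, with $\wh{F_s}$ centreless for $s\ge 2$) for each finite-index subgroup $F_s\rtimes_\psi\Z$, whereupon a procyclic central subgroup of $\wh{F_s\rtimes_\psi\Z}$ would force a power of $\psi$ to become inner after profinite completion -- contradicting that $[\psi]$ has infinite order in $\out(F_s)$.

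\emph{The main obstacle.} Items (1)--(3) are essentially citations and item (4) is a clean cohomological-dimension count, so the substance -- and the step I expect to be hardest -- is the last point in item (5): controlling the centre of the profinite completion, i.e.\ showing that an outer automorphism of $F_r$ of infinite order does not become (virtually) inner after profinite completion. More broadly, items (4) and (5) rest on the deep input of Agol--Wise (virtual specialness and LERF) and of Wilton--Zalesskii (profinite visibility of the geometric decomposition), and that is where the real difficulty lies.
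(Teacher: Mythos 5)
Items (1)--(3) of your proposal coincide in substance with the paper's proof (Lemma~\ref{l:abel}, goodness via Agol--Wise as packaged in \cite{Cav}/\cite{AFW}, and Lott--L\"uck \cite{LL}, with the non-orientable case handled by passing to the orientable double cover). Your item (4) takes a genuinely different route: instead of the paper's argument (which uses Propositions~\ref{dense} and~\ref{p:gab} together with item (3) to force a closed $X$ to be aspherical, and then compares $\cd(\wh{\pi_1(X)})=3$ with $\cd(\wh\G)=2$), you use torsion, the Kneser--Milnor/Sphere Theorem decomposition and cohomological dimension of the completion. That route works, but you should add the one-line justification that an aspherical prime factor $\Lambda$ of $\pi_1(X)$ is a retract of $\pi_1(X)$, so its closure in $\wh{\pi_1(X)}$ is a copy of $\wh\Lambda$; only then do Lemma~\ref{l:cd} (Poincar\'e duality) and Proposition~\ref{cdbound} give $\cd(\wh{\pi_1(X)})\ge 3$.

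The genuine gap is in item (5), in the bounded Seifert case with $\G=F_r\rtimes_\phi\Z$. Your plan requires that no open subgroup of $\wh\G$ has infinite centre, i.e.\ that $Z\bigl(\wh{F_s\rtimes_\psi\Z}\bigr)$ is trivial for every finite-index subgroup $F_s\rtimes_\psi\Z$ of $\G$. A central element of $\wh{F_s}\rtimes\wh\Z$ has the form $u\,t^{a}$ with $a\in\wh\Z$, and centrality only tells you that the automorphism of $\wh{F_s}$ induced by $a$ (the continuous $\wh\Z$-extension of $\psi$) is inner; $a$ need not be an integer, so this is not ``a power of $\psi$ becomes inner'', and even for integer powers, innerness over $\wh{F_s}$ does not formally contradict $[\psi]$ having infinite order in $\out(F_s)$ --- you would need conjugacy separability of $F_s$ plus a Grossman-type ``pointwise inner implies inner'' argument, and for general $a\in\wh\Z$ something stronger still (in effect, injectivity of $\wh\Z$ into the outer continuous automorphisms of $\wh{F_s}$). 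You cite \cite{WZ} for the hyperbolic case, which is legitimate (the paper notes this in a remark), but for the free-by-cyclic case no proof or reference is offered, and this is exactly the deep statement the paper is careful to avoid: its proof of (5) never controls the centre of $\wh\G$. Instead it uses only that the \emph{discrete} group $\G$ has trivial centre, so the (dense) image of $\G$ in $\wh{\pi_1(X)}$ meets the central procyclic subgroup $\overline{Z}$ trivially and therefore projects to a dense, finitely presented subgroup of $\wh\Lambda$, where $\Lambda$ is the base-orbifold group; since $b_1^{(2)}(\Lambda)\neq 0$ while $b_1^{(2)}(\G)=0$ (item (3) and Proposition~\ref{p:gab}), Proposition~\ref{dense} gives the contradiction. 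Either adopt that argument or supply the missing profinite-centre result; as written, the final step of your item (5) is an assertion, not a proof.
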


\begin{proof} (1) is a special case of Lemma \ref{l:abel}. 
Using \cite{Ag} and \cite{Wi}, goodness of compact 3-manifold groups
follows from \cite{Cav} (see also \cite{Re}
Theorem 7.3 and \cite{AFW} \S  6G.24).  

Part (3) follows from the calculations of 
Lott and L\"uck \cite{LL}[Theorem 0.1].
Their theorem is stated only for orientable manifolds but this is not a serious problem because,
by L\"uck approximation (which we used to  define $b_1^{(2)}(X)$), 
if $X$ is a non-orientable compact 3-manifold with infinite fundamental group
and $Y\rightarrow X$ is its orientable double cover, then  $b_1^{(2)}(Y)=2\, b_1^{(2)}(X)$.

To prove (4) we follow the proof of \cite{Re} Theorem 8.3.
Item (3) and Proposition \ref{p:gab} tell us that
$b_1^{(2)}(\G)=0$, so by Proposition \ref{dense} we have
$b_1^{(2)}(X)=0$. 
Moreover,
every subgroup of finite index in $\G$ has non-cyclic finite quotients,
so the opposite implication in (3) tells us that
$X$ is aspherical.  We know that 
$\pi_1(X)$ is good (item (2)), so
since $X$ is closed, $\cd(\wh{\pi_1N})=3$ by Lemma \ref{l:cd}. But
$\G$ is also good (by (2) or Lemma \ref{l:extend}), and
$\cd(\wh{\G})=2$, by Lemma \ref{l:cd}.

To prove the last part, we  argue  as follows. 
First, since
every subgroup of finite index in $\G$ has infinitely many finite quotients
that are not solvable,  we can
quickly eliminate all possibilities for $X$ apart from those with
geometric structure modelled on ${\H}^2\times {\R}$ and
$\widetilde{\SL}_2$. In these two remaining cases, the projection of $X$ onto its base orbifold  gives
a short exact sequence of profinite groups
$$
1\to \overline{Z} \to \wh{\pi_1X}\to \wh{\Lambda}\to 1,
$$
where $Z$ is the infinite cyclic centre of $\pi_1X$ and $\Lambda$  is a non-elementary discrete
group of isometries of ${\H}^2$.
(In fact, following the discussion in \S 2, we know that $\overline{Z}=\wh{Z}$, because $\pi_1(X)$ is LERF, but we do not need this.)
Note that $\overline{Z}$ is central in $\wh{\pi_1(X)}$, because if $[z,g]\neq 1$ for some $z\in\overline{Z}$ and $g\in\wh{\pi_1X}$,
then for some finite quotient $q:\wh{\pi_1(X)}\to Q$ we would have 
$[q(z),q(g)]\neq 1$, which
 would contradict the centrality of
$Z$, since $q(\overline{Z}) = q(Z)$ and $q(\wh{\pi_1(X)}) = q({\pi_1(X)})$. 

It is easy to see that the fundamental group of a finite volume hyperbolic 3-manifold has trivial centre.
A trivial calculation shows that $F_r \rtimes_\phi \Z$ also has trivial centre, except  when a power of $\phi$ is
an inner automorphism. (In the exceptional case,  if $t$ is the generator of the $\Z$ factor and $\phi^r$ is conjugation by $u\in F_r$,
then $t^{-r}u$ is central.)

Since $\G$ has trivial centre, its image under any isomorphism $\wh{\G}\to \widehat{\pi_1(X)}$ would intersect
$\overline{Z}$ trivially, and hence project to a   dense subgroup of $\widehat{\Lambda}$.
But using L\"uck approximation, it is easy to see that $b_1^{(2)}(\Lambda)\neq 0$
since $\Lambda$ is either virtually free of rank at least $2$
or virtually a surface group of genus
at least $2$. And in the light of
Theorem \ref{summary}(3) and Proposition \ref{p:gab}, this would contradict Proposition \ref{dense}.
\end{proof}

\begin{remark}
In the setting of Theorem \ref{summary}(5), when $\G$ is the fundamental group
of a non-compact finite volume hyperbolic 3-manifold, one  can use
Proposition 6.6 of \cite{WZ} together with
\cite{Wi} to prove the stronger statement that $\widehat{\G}$ has trivial
centre.\end{remark}

\begin{corollary}\label{c:notClosed}
Let $X$ be a compact orientable 3-manifold. If $\wh{\pi_1(X)}\cong \wh{\G}$, where $\G=F_r\rtimes\Z$,
then $X$ is irreducible and its boundary is a union of $t$ incompressible tori where
$1\le t\le b_1(\G)$.
\end{corollary}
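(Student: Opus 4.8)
The plan is to assemble Corollary \ref{c:notClosed} from the structural facts collected in Theorem \ref{summary} together with the $L^2$-Betti number information. First I would record that $\wh{\pi_1(X)}\cong\wh\G$ forces $\pi_1(X)$ to be infinite and not virtually cyclic: indeed $\G=F_r\rtimes\Z$ has every finite-index subgroup mapping onto a non-abelian free group (since $r\ge 2$, or after passing to a finite-index subgroup when $r$ is allowed to be small), hence onto arbitrarily large non-solvable quotients, and this property is detected by $\wh\G$. In particular $X$ is not a spherical-type manifold. Next, Proposition \ref{p:gab} applied to $1\to F_r\to\G\to\Z\to 1$ gives $b_1^{(2)}(\G)=0$, so by Proposition \ref{dense} we get $b_1^{(2)}(X)=0$. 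Feeding this into Theorem \ref{summary}(3), and using that $\pi_1(X)$ is not virtually infinite cyclic, we conclude that $X$ is aspherical (hence irreducible) and $\partial X$ is a disjoint union of tori and Klein bottles; since $X$ is orientable by hypothesis, there are no Klein bottles, so $\partial X$ is a union of $t\ge 0$ tori.

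The remaining points are to rule out $t=0$ (i.e.\ $X$ closed) and to bound $t$ above by $b_1(\G)$. For the first, if $X$ were closed then Theorem \ref{summary}(4) applies directly: a closed $3$-manifold group cannot have the same profinite completion as a free-by-cyclic group (the cohomological-dimension obstruction $\cd(\wh{\pi_1X})=3\ne 2=\cd(\wh\G)$, valid by goodness of both groups and Lemma \ref{l:cd}/\ref{l:extend}). Hence $t\ge 1$. For the upper bound, I would use the homological constraint from Theorem \ref{summary}(1): $H_1(X,\Z)\cong H_1(\G,\Z)$, so in particular $b_1(X)=b_1(\G)$. For a compact orientable aspherical (hence irreducible) $3$-manifold with boundary a union of $t$ tori, a standard half-lives-half-dies argument shows that the image of $H_1(\partial X,\Q)\to H_1(X,\Q)$ has dimension exactly $t$ (each torus contributes a $1$-dimensional subspace to $H_1(X,\Q)$ by the long exact sequence of the pair together with Poincar\'e--Lefschetz duality), so $b_1(X)\ge t$. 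Combining, $t\le b_1(X)=b_1(\G)$, which completes the proof.

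The main obstacle, such as it is, is the half-lives-half-dies step giving $t\le b_1(X)$: one must be slightly careful that $X$ is irreducible with incompressible (torus) boundary so that the duality argument applies cleanly and each boundary torus genuinely injects a rank-one subspace into $H_1(X,\Q)$. This is exactly why the asphericity/irreducibility conclusion from Theorem \ref{summary}(3) is extracted first. Everything else is a direct citation of Theorem \ref{summary} and the $L^2$-Betti number propositions; no new input is needed beyond organizing these implications in the right order.
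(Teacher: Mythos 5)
Your overall route is the same as the paper's: get $b_1^{(2)}(X)=0$ from Propositions \ref{p:gab} and \ref{dense}, feed it into Theorem \ref{summary}(3) to obtain asphericity (hence irreducibility) and toral boundary, rule out the closed case with Theorem \ref{summary}(4), and bound $t$ by half-lives-half-dies together with $H_1(X,\Z)\cong H_1(\G,\Z)$. That part matches the paper essentially step for step (the paper cites \ref{summary}(3),(4),(5) and then Proposition \ref{halfliveshalfdies}).

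However, there is a genuine gap: the corollary asserts that the boundary tori are \emph{incompressible}, and you never prove this. It does not follow from the asphericity/irreducibility conclusion of Theorem \ref{summary}(3) --- the solid torus is aspherical and irreducible with compressible torus boundary --- and it is not needed for, nor supplied by, the half-lives-half-dies duality statement (Proposition \ref{halfliveshalfdies} has no incompressibility hypothesis, so your stated worry there is misplaced while the actual claim of the corollary goes unaddressed). The paper closes this in one line: if some boundary torus were compressible, then irreducibility would force $X$ to be a solid torus, which is impossible because $\G$ (and hence $\wh{\pi_1(X)}=\wh\G$) has non-abelian finite quotients, whereas a solid torus group has only cyclic ones. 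You need to add such an argument. A smaller point: your justification that $\pi_1(X)$ is not virtually cyclic via ``every finite-index subgroup of $\G$ maps onto a non-abelian free group'' is false as stated (e.g.\ the figure-eight knot group $F_2\rtimes\Z$ has $b_1=1$ and surjects no non-abelian free group); what is true, and what the paper uses, is that every finite-index subgroup of $\G$ has non-cyclic (indeed non-solvable) finite quotients, e.g.\ via quotients $(F_s/C)\rtimes Q_C$ for characteristic $C<F_s$ of finite index, and that suffices for your purpose.
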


\begin{proof} 
Theorem \ref{summary} (3), (4), (5) 
tells us that $X$ is irreducible and has non-empty toral boundary. If one of the boundary
tori were compressible, then by irreducibilty $X$ would be a solid torus, which it is not since $\G$
has non-abelian finite quotients and $\wh{\G}=\wh{\pi_1(X)}$. 

The upper bound on the number of tori comes from the well-known ``half-lives, half-dies'' phenomenon described in the following 
standard consequence of Poincare-Lefschetz duality. \end{proof}

\begin{proposition}
\label{halfliveshalfdies}
Let $M$ be a compact orientable 3-manifold with non-empty boundary. 
The rank of the image of 
$$H_1(\partial M, \Z)\rightarrow H_1(M, \Z)$$
is $b_1(\partial M)/2$.
\end{proposition}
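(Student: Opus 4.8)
The plan is to prove Proposition \ref{halfliveshalfdies} by the standard ``half-lives, half-dies'' argument, using the long exact sequence of the pair $(M,\partial M)$ together with Poincar\'e--Lefschetz duality. Write $\partial M$ as a disjoint union of closed orientable surfaces (after discarding any spherical components, in keeping with the conventions of the paper), so $b_1(\partial M)$ is even, say $b_1(\partial M)=2g$; the claim is that the image of $j_*\colon H_1(\partial M,\Z)\to H_1(M,\Z)$ has rank $g$. Everything is tensored with $\Q$ to kill torsion, since the statement concerns ranks only; write $V=H_1(\partial M,\Q)$, which has dimension $2g$.

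First I would set up the relevant portion of the long exact sequence of the pair,
$$
\cdots \to H_2(M,\partial M;\Q)\xrightarrow{\ \partial\ } H_1(\partial M;\Q)\xrightarrow{\ j_*\ } H_1(M;\Q)\to\cdots,
$$
so that $\operatorname{rk} j_* = 2g - \operatorname{rk}(\partial\colon H_2(M,\partial M;\Q)\to H_1(\partial M;\Q))$. By Poincar\'e--Lefschetz duality, $H_2(M,\partial M;\Q)\cong H^1(M;\Q)$ and $H_1(M;\Q)\cong H^2(M,\partial M;\Q)$, and under these identifications the connecting map $\partial$ is carried to the restriction map $i^*\colon H^1(M;\Q)\to H^1(\partial M;\Q)$ (up to sign and the identification of $H^*(\partial M;\Q)$ with $H_*(\partial M;\Q)$ coming from Poincar\'e duality on the closed surface $\partial M$). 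The upshot is that $\ker j_*=\operatorname{im}\partial$ has the same dimension as $\operatorname{im} i^*$, so $\operatorname{rk} j_*=2g-\dim\operatorname{im} i^*$.

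Next I would show that $\operatorname{im} j_*$ and $\operatorname{im} i^*$ (viewed inside $V$ via the duality pairing on the closed surface $\partial M$) are mutual annihilators with respect to the nondegenerate skew-symmetric intersection form $\langle\,\cdot\,,\cdot\,\rangle$ on $V=H_1(\partial M;\Q)$. Concretely, for $\alpha\in H_1(M;\Q)$ pulled back to a class in $H^1(M)$ and $\beta\in H_1(\partial M;\Q)$, the pairing $\langle i^*\alpha,\beta\rangle$ equals $\langle \alpha, j_*\beta\rangle$ evaluated via the cap/cup structure — i.e. a class in the image of $i^*$ pairs trivially with a cycle in $\partial M$ precisely when that cycle bounds in $M$, which is exactly the statement that $\operatorname{im} i^* = (\ker j_*)^{\perp}$, equivalently $\operatorname{im} i^* \subseteq (\operatorname{im} j_*)^{\perp}$... the cleaner route is: the exactness above gives $\ker j_* = \operatorname{im}\partial$, and the duality identification gives $\dim\operatorname{im}\partial = \dim\operatorname{im} i^*$; then a short computation with the intersection form shows $\operatorname{im} j_*$ is an isotropic subspace of $V$, because for $x,y\in H_1(\partial M;\Q)$ with $x,y$ both bounding in $M$, the intersection number $\langle x,y\rangle_{\partial M}$ equals an intersection number of the bounding chains in $M$, which vanishes since $M$ is $3$-dimensional and the chains have complementary... in fact the clean statement is that $\operatorname{im} j_*$ is isotropic, hence $\dim\operatorname{im} j_*\le g$, while the exact-sequence count gives $\dim\operatorname{im} j_* = 2g-\dim\operatorname{im} i^* \ge 2g - \dim\operatorname{im} j_*$ (using $\operatorname{im} i^*\subseteq(\operatorname{im} j_*)^\perp$ which has dimension $2g-\dim\operatorname{im} j_*$), forcing $\dim\operatorname{im} j_* = g$.

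I expect the main obstacle to be bookkeeping: getting the Poincar\'e--Lefschetz duality identifications to commute with the long exact sequence with the correct maps (restriction $i^*$ versus connecting homomorphism $\partial$), and verifying cleanly that $\operatorname{im} j_*$ is isotropic for the intersection form on the closed surface $\partial M$ — the latter is the crux of why ``half'' dies. None of this requires orientability of anything beyond $M$ and $\partial M$, which is hypothesised, nor any of the deep profinite machinery; it is purely classical algebraic topology, so I would keep the write-up brief and simply cite a standard reference (e.g. Hatcher, or the $3$-manifold literature) for the identification of $\partial$ with $i^*$ under duality, then give the two-line dimension count.
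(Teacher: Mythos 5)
The paper offers no proof of this proposition---it is quoted as a standard consequence of Poincar\'e--Lefschetz duality---and your argument is precisely that standard argument (the long exact sequence of $(M,\partial M)$ with $\Q$ coefficients, duality identifying the connecting map with $i^*\colon H^1(M;\Q)\to H^1(\partial M;\Q)$, and the adjointness $\langle i^*\alpha,\beta\rangle=\langle\alpha,j_*\beta\rangle$ giving $\mathrm{rk}\, i^* = \mathrm{rk}\, j_*$, hence $\dim\ker j_*=\mathrm{rk}\, j_*=b_1(\partial M)/2$), so it is correct and follows the route the paper intends. One slip to tidy in the write-up: the isotropic subspace of $\bigl(H_1(\partial M;\Q),\langle\cdot,\cdot\rangle\bigr)$ is $\ker j_*$ (the classes that bound in $M$), not $\mathrm{im}\, j_*$, which lives in $H_1(M;\Q)$ where there is no intersection form---but this aside is redundant anyway, since your ``cleaner route'' ($\ker j_*=\mathrm{im}\,\partial$, $\dim\mathrm{im}\,\partial=\dim\mathrm{im}\, i^*$, and $\mathrm{im}\, i^*$ the annihilator of $\ker j_*$) already closes the dimension count.
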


It is more awkward to state the analogue of
Corollary \ref{c:notClosed} for non-orientable manifolds $M$.  
One way
around this is to note that an index-2 subgroup of
$F_r\rtimes_{\phi}\Z$ is either $F_r\rtimes_{\phi^2}\Z$ or else is of
the form $F_{2r-1}\rtimes\Z$, so the orientable double cover $X$ is of
the form described in Corollary \ref{c:notClosed} and each boundary
torus in $X$ either covers a Klein bottle in $\partial X$ or a torus
(1-to-1 or 2-to-1).

\section{The Proof of Theorems B and C} 
 
The results in this section form the technical heart of the paper. In the statement of Theorem \ref{t:fibres},
we assumed that $b_1(M)=1$, so the following theorem applies in that setting. Indeed, in the light of  
Corollary \ref{c:notClosed} and the comment that follows Proposition \ref{halfliveshalfdies},
Theorem \ref{t:realB} completes the proof of Theorem \ref{t:fibres}. 

\begin{theorem}\label{t:realB}
Let $M$ be a compact, irreducible, 3-manifold whose boundary is a non-empty union of incompressible tori and Klein bottles. Suppose that there is an
isomorphism  $\wh{\pi_1(M)}\to\wh{F\rtimes\Z}$ such that $\pi_1M$ has cyclic image under the composition
$$
\pi_1M\to\wh{\pi_1M}\to\wh{F\rtimes\Z}\to\wh{\Z},
$$
where $F$ is finitely generated and free, and $\wh{F\rtimes\Z}\to\wh{\Z}$ is induced by the obvious surjection $F\rtimes\Z\to\Z$. Then,
$M$ fibres over the circle with compact fibre (equivalently, 
$\pi_1(M)$ is of the form $F_r\rtimes\Z$).
\end{theorem}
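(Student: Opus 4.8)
The plan is to exploit the cohomological dichotomy in Theorem \ref{tt:freedmen}. Let $f:\pi_1(M)\to\Z$ be the composition $\pi_1M\to\wh{\pi_1M}\to\wh{F\rtimes\Z}\to\wh\Z$, which by hypothesis has cyclic (hence infinite cyclic, since $\wh\Z$ is torsion-free and $\pi_1M$ surjects a finite index subgroup of $\wh\Z$) image; after replacing the target by the image we may regard $f$ as an epimorphism onto $\Z$. First I would observe that the isomorphism $\wh{\pi_1M}\to\wh{F\rtimes\Z}$ carries $\overline{\ker f}$ onto the kernel of $\wh{F\rtimes\Z}\to\wh\Z$, which by Lemma \ref{l:full} is $\wh F$; hence $\overline{\ker f}\cong\wh F$ is free profinite, so $\cd(\overline{\ker f})=1$. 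Theorem \ref{tt:freedmen} then forces the first alternative: $\ker f$ is finitely generated and free and $M$ is fibred. By Stallings' Fibering Theorem this is equivalent to $\pi_1(M)\cong F_r\rtimes\Z$, giving the conclusion.

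The points that need care are the following. First, I must check that $\overline{\ker f}$ really is the full kernel of the map $\wh{\pi_1M}\to\wh\Z$ induced by $f$: this is the content of the sequence $(\dagger)$ in the case $b_1=1$, but here we do not assume $b_1(M)=1$, so instead I would argue directly. The map $f:\pi_1M\to\Z$ extends to $\hat f:\wh{\pi_1M}\to\wh\Z$, and $\overline{\ker f}\subseteq\ker\hat f$; for the reverse inclusion one uses that $\ker f$ is (by construction) exactly the preimage in $\pi_1M$ of the kernel of $\wh{\pi_1M}\to\wh{F\rtimes\Z}\to\wh\Z$, that $\pi_1M$ is dense in $\wh{\pi_1M}$, and that $\ker\hat f$ is open in no finite quotient that separates it — more precisely, $\ker\hat f$ is the closure of $\ker f$ because for any open $U\ni 1$ in $\wh{\pi_1M}$ the coset structure of $\wh\Z=\hat f(\wh{\pi_1M})$ shows $\ker f\cdot(U\cap\ker\hat f)$ exhausts $\ker\hat f$. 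So $\overline{\ker f}=\ker\hat f$, and under the given isomorphism this maps isomorphically onto $\ker(\wh{F\rtimes\Z}\to\wh\Z)=\wh F$ by Lemma \ref{l:full}.

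Second, I must verify the hypotheses of Theorem \ref{tt:freedmen}: $M$ is compact, irreducible, with non-empty boundary a union of incompressible tori and Klein bottles — all of which are assumed outright. So the theorem applies and yields the stated dichotomy for the epimorphism $f:\pi_1M\to\Z$. Since $\overline{\ker f}\cong\wh F$ has $\cd\le 1$ (free groups are good with $\cd=1$, and $\cd(\wh F)\le\cd(F)=1$ by Lemma \ref{l:cd}), the second alternative — $\cd(\overline{\ker f})\ge 2$ — is excluded. Hence $\ker f$ is finitely generated and free and $M$ fibres over the circle with compact fibre, and $\pi_1(M)\cong F_r\rtimes\Z$ by Stallings.

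The main obstacle is the first of these points: correctly identifying $\overline{\ker f}$ with $\wh F$ in the absence of the $b_1=1$ hypothesis, i.e. checking that the closure of $\ker f$ in $\wh{\pi_1M}$ is exactly the kernel of the induced map to $\wh\Z$ and that this kernel is carried to $\wh F$ rather than some larger closed subgroup. Once the identification $\overline{\ker f}\cong\wh F$ is secured, the rest is a direct application of Theorem \ref{tt:freedmen} together with the cohomological dimension bound for free profinite groups and Stallings' theorem.
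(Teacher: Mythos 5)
Your proposal follows essentially the same route as the paper: compose the given isomorphism with $\wh{F\rtimes\Z}\to\wh{\Z}$ to obtain $\Phi:\wh{\pi_1M}\to\wh{\Z}$, let $f$ be its restriction to $\pi_1M$ (an epimorphism onto a copy of $\Z$), and rule out the second alternative of Theorem \ref{t:freedmen} by bounding the cohomological dimension of the closure of $\ker~f$, so that $\ker~f$ is finitely generated free and Stallings gives the fibration. Two remarks. First, your parenthetical reason that the image of $f$ is infinite cyclic is off: the image is not of finite index in $\wh{\Z}$ (an infinite cyclic subgroup cannot be), rather it is \emph{dense} because $\pi_1M$ is dense and $\Phi$ is continuous and surjective, and a dense subgroup of the infinite group $\wh{\Z}$ cannot be finite. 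Second, and more importantly, the step where you claim the equality $\overline{\ker~f}=\ker\Phi$ is both more than you need and not actually proved as written: the ``coset structure'' argument for the reverse inclusion is hand-waving, and it also silently conflates $\Phi$ with the canonical extension $\wh{f}$ of $f$ -- the image of $f$ is a dense infinite cyclic subgroup of $\wh{\Z}$ generated by some unit $u$, so $\Phi$ is $\wh{f}$ followed by the automorphism of $\wh{\Z}$ given by multiplication by $u$, and only after noting this does the standard fact $\overline{N}=\ker\bigl(\wh{G}\to\wh{G/N}\bigr)$ yield the equality. The paper sidesteps all of this: one only needs the trivial inclusion $\overline{\ker~f}\subseteq\ker\Phi\cong\wh{F}$, and then Proposition \ref{cdbound} (monotonicity of $\cd_p$ for closed subgroups) gives $\cd(\overline{\ker~f})\le\cd(\wh{F})=1$, which is all that Theorem \ref{t:freedmen} requires. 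So either drop the equality claim and invoke Proposition \ref{cdbound}, or, if you want the equality, supply the unit/automorphism argument just indicated; with that repair your proof is correct and coincides with the paper's.
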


\begin{remark}\label{b_1need}
Given an arbitrary isomorphism  $\wh{\pi_1(M)}\to\wh{F\rtimes\Z}$, the image of $\pi_1M$ in $\wh{\Z}$ will be a finitely generated, dense,
free abelian group, but one has to contend with the fact that it might not be cyclic. In Theorem \ref{t:fibres} we overcame this by requiring 
$b_1(M)=1$. Boileau and Friedl
\cite{BF} avoid the same problem by restricting attention to  isomorphisms $\wh{\G}_1\to\wh{\G}_2$ that  induce an isomorphism on $H^1(\G_i,\Z)$.
In Theorem \ref{t:realB} we remove the difficulty directly with the hypothesis on $\pi_1(M)\to\wh{\Z}$.
\end{remark}

Theorem \ref{t:realB} is proved by applying the following result to the map from $\pi_1M$ to 
$\wh{\Z}$. In more detail,  
in Theorem \ref{t:realB} we assume that we have an isomorphism
$\wh{\pi_1(M)}\to\wh{F\rtimes\Z}$, which we compose with
$\wh{F\rtimes\Z}\to\wh{\Z}$ to obtain an epimorphism of profinite groups $\Phi:\wh{\pi_1M}\to\wh{\Z}$.
Let $f$ denote the restriction of $\Phi$ to $\pi_1M$.
The kernel of $\Phi$ is isomorphic to the free profinite group $\wh{F}$; in particular
it has cohomological dimension 1. From Proposition \ref{cdbound}  it follows 
that the closure of  $\ker~f$ also 
has cohomological dimension 1. And we have assumed that the image of $f$
is isomorphic to $\Z$, so Theorem \ref{t:freedmen} implies that $\ker~f$ is finitely generated and free.

\begin{theorem}\label{t:freedmen}
Let $M$ be a compact, irreducible, 3-manifold with non-empty boundary 
which is a non-empty union of incompressible tori and Klein bottles. 
Let $f:\pi_1(M)\to\Z$ be an epimorphism. 
Then, either $\ker~f$ is finitely generated and free (and $M$ is fibred) or else
the closure of $\ker~f$ in $\wh{\pi_1M}$ has cohomological dimension at least $2$.
\end{theorem}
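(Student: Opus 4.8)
The plan is to split into cases according to whether $\ker f$ is finitely generated. If it is, then since $M$ is compact and irreducible, Stallings' Fibering Theorem \cite{stall} shows that $M$ fibres over the circle with compact fibre $\Sigma$ and $\ker f\cong\pi_1\Sigma$; the boundary of the mapping torus $M$ is the $\partial\Sigma$-bundle over $S^1$ given by the monodromy, so $\partial M\ne\emptyset$ forces $\partial\Sigma\ne\emptyset$, whence $\Sigma$ is a compact surface with boundary and $\ker f=\pi_1\Sigma$ is free. This is the first alternative, so for the rest of the argument I assume $\ker f$ is not finitely generated; in particular $f$ is not a fibred class.

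The strategy in this case is to exhibit a closed surface subgroup $\pi_1 S<\ker f$ whose closure $\overline{\pi_1 S}$ in $\wh{\pi_1 M}$ already has cohomological dimension $2$; since $\overline{\pi_1 S}\le\overline{\ker f}$, Proposition \ref{cdbound} then gives $\cd(\overline{\ker f})\ge 2$, which is the second alternative. The surface comes from the construction of Freedman and Freedman \cite{FF}: applied to the non-fibred class $f$, it produces a closed embedded incompressible surface $S$ in the infinite cyclic cover $p\colon\widetilde M\to M$ with $\pi_1\widetilde M\cong\ker f$. Being incompressible, $S$ is neither $\Sh^2$ nor $\R{\rm P}^2$, and, identifying $\pi_1\widetilde M$ with $\ker f$ via $p_*$, the group $\pi_1 S$ is a closed surface subgroup of $\ker f$. (If \cite{FF} requires orientability one first replaces $M$ by its orientation double cover, which satisfies the same hypotheses and whose infinite cyclic cover finitely covers $\widetilde M$; this only changes ``finite cyclic cover'' to ``finite cover'' in what follows.)

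To compute $\cd(\overline{\pi_1 S})$ I would use the separability fact isolated from \cite{Ag,Wi,PW,Liu}: for a \emph{closed} $3$-manifold $W$ and a closed embedded incompressible surface $S\subset W$, the closure of $\pi_1 S$ in $\wh{\pi_1 W}$ is isomorphic to $\wh{\pi_1 S}$. Since $S$ is compact, it lies in a compact submanifold of $\widetilde M$ that embeds in the $k$-fold cyclic cover $M_k\to M$ once $k$ is sufficiently large; thus $S$ is embedded and incompressible in the compact manifold $M_k$. Taking $W$ to be the double of $M_k$ — a closed $3$-manifold in which $\partial M_k$ remains incompressible — the surface $S\subset W$ is still closed, embedded and incompressible, so $\overline{\pi_1 S}\cong\wh{\pi_1 S}$ inside $\wh{\pi_1 W}$, and by the discussion preceding Corollary \ref{freenotsurface} together with Lemma \ref{l:cd} this group has $\cd=2$. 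It then remains to compare topologies: $3$-manifold groups are LERF (by \cite{Ag} and \cite{Wi}), so $\wh{\pi_1 M_k}$ embeds in $\wh{\pi_1 W}$ carrying its own profinite topology, and $\wh{\pi_1 M_k}$ sits as a finite-index subgroup of $\wh{\pi_1 M}$; hence the closure of $\pi_1 S$ in $\wh{\pi_1 M}$ coincides with its closure in $\wh{\pi_1 W}$, and so has $\cd=2$. This establishes the second alternative and completes the proof.

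I expect the substantive difficulties to be the two imported ingredients: extracting the closed incompressible surface $S$ in $\widetilde M$ from the non-fibred hypothesis (the Freedman--Freedman construction), and transferring separability through the profinite completion, i.e.\ the isomorphism $\overline{\pi_1 S}\cong\wh{\pi_1 S}$, which relies on the Agol--Wise cubulation together with the Przytycki--Wise/Liu theorems on the separability of surface subgroups. The remaining bookkeeping — finite covers, doubles, and the compatibility of profinite topologies, with a little extra care at the Klein-bottle boundary components — is routine by comparison.
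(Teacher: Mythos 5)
Your overall skeleton matches the paper's (Stallings for the finitely generated case, Freedman--Freedman to produce a closed incompressible surface in the infinite cyclic cover, embed it in a finite cyclic cover, induce the full profinite topology on the surface group via the Przytycki--Wise/Liu separability technology, then conclude with Lemma \ref{l:cd} and Proposition \ref{cdbound}), but there is a genuine gap at the first step. Theorem 3 of \cite{FF} is only stated for classes $f$ whose restriction to \emph{every} boundary component is non-trivial, and you apply it to an arbitrary non-fibred class. In your second case ($\ker~f$ not finitely generated) it can certainly happen that $f$ kills $\pi_1(T)$ for some boundary torus or Klein bottle $T$ --- indeed, whenever that happens the class is automatically non-fibred, so this situation is unavoidable in your branch --- and then the dichotomy you quote is simply not available. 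The paper treats this case separately and by a different mechanism: if $\pi_1(T)\subset\ker~f$, take $A\cong\Z^2$ of finite index in $\pi_1(T)$; by Hamilton's theorem \cite{Ham}, $\pi_1(M)$ induces the full profinite topology on $A$, so $\overline{\ker~f}$ contains a closed copy of $\wh{\Z}^2$, which has cohomological dimension $2$, and Proposition \ref{cdbound} finishes the argument. You must add this case (or an equivalent substitute) before invoking \cite{FF}.

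A second, smaller problem is your assertion that ``$3$-manifold groups are LERF (by \cite{Ag} and \cite{Wi})'': this is false in general --- Agol and Wise give LERF for hyperbolic $3$-manifold groups, and it fails for some graph manifolds, which are not excluded by the hypotheses here. Fortunately you do not need it: if $\pi_1(W)$ induces the full profinite topology on $\pi_1(S)$, then so does any intermediate subgroup, in particular $\pi_1(M_k)$, and hence (by finite index) $\pi_1(M)$; this gives $\overline{\pi_1(S)}\cong\wh{\pi_1(S)}$ inside $\wh{\pi_1(M)}$ without any claim that $\wh{\pi_1(M_k)}$ embeds in $\wh{\pi_1(W)}$. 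With these two repairs, your doubling trick is a legitimate variant of the paper's argument: the paper instead performs Dehn fillings on the finite cyclic cover, using \cite{CGLS} and \cite{GL} to keep $S$ incompressible and the filled manifold irreducible, precisely because it is \emph{proving} the required lifting statement there via Liu's aspirality criterion (which needs a closed manifold), rather than quoting it; the closed-manifold statement you take as a black box is exactly the result the paper extracts from \cite{PW2} and \cite{Liu}, so modulo that citation your route bypasses the surgery step.
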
 

\begin{proof} In Theorem 3 of \cite{FF}, Freedman and Freedman prove
that if $M$ is a compact 3-manifold with each boundary component
either a torus or a Klein bottle and $f:\pi_1(M)\to\Z$ has a
non-trivial restriction to each boundary component, then either (1)
the infinite cyclic covering $M^f\to M$ corresponding to $f$ is
homeomorphic to a compact surface times $\R$, or (2) the cover $M^f$
contains a closed 2-sided embedded incompressible surface $S$.

In Theorem \ref{t:freedmen}, we are assuming that the boundary
components are incompressible, so if the restriction of $f$ to some
component $T$ were trivial, then $\pi_1(T)$ would lie in the kernel of
$f$. Let $A\cong \Z^2$ be a subgroup of finite index in
$\pi_1(T)$. From \cite{Ham}, we know that $\pi_1(M)$ induces the full
profinite topology on $A$. Therefore, by Lemma \ref{l:cd}, the closure
of $\ker~f$ in $\wh{\pi_1(M)}$ has cohomological dimension at least
$2$, as required.  Thus we may assume that $f$ satisfies the
hypotheses of Theorem 3 of \cite{FF}. If alternative (1) of that
theorem holds, then the kernel of $f$ is a finitely generated free
group and we are done. So we assume that alternative (2) holds and
consider the closed embedded 2-sided incompressible surface $S$.

Since $S$ is compact, it does not intersect its translates by suitably high powers of the basic deck 
transformation of $M^f$,
so we can factor out by such a power to obtain a finite-cyclic covering $N\to M$ in which $S$ embedded.
The proof will be complete if we can prove that $\pi_1(N)$ (equivalently 
$\pi_1(M)$) induces the full
profinite topology on $\pi_1(S)$ (since, by construction, $\pi_1(S)$ 
is contained in $\ker~f$). 

It will be convenient in what follows to pass to a finite cover of $N$
and $S$ (which we continue to call $N$ and $S$) so that both are orientable
and $S$ continues to be embedded. Standard separability properties ensure
that if $\pi_1(M)$ induces the full profinite topology on $\pi_1(S)$, then
this will get promoted to the original surface. Henceforth we assume
$N$ and $S$ are orientable.

In more topological language, what we must prove is the following. 

\begin{proposition}\label{l:key}
If $p:S_1\to S$ is the finite-sheeted covering corresponding to an arbitrary finite-index subgroup of $\pi_1(S)$,
then there is a finite-sheeted covering $q:N_1\to N$ so that $p$ is the restriction of $q$ to a connected
component of $q^{-1}(S)$.
\end{proposition}

This will be proved in the next section, thus completing the proof of Theorem
\ref{t:freedmen}.\end{proof}

\subsection{Surface separability, controlled Dehn filling and aspiral surfaces}

What we need in Proposition \ref{l:key} is close to a recent theorem of
Przytycki and Wise in \cite{PW2} (see also \cite{PW1} and \cite{PW}).
They prove that for every closed incompressible surface $S$ embedded
in a compact 3-manifold $M$, the image of $\pi_1(S)$ is closed in the 
profinite topology of
$\pi_1(M)$. But we need more (recall \S 2.1):
all finite-index subgroups of  $\pi_1(S)$ have to be closed in $\pi_1M$ as
well. In order to prove their results, Przytycki and Wise developed a
technology for merging finite covers of the blocks in the JSJ
decomposition of the manifold \cite{PW}, \cite{PW2}, \cite{PW1}.  Liu
\cite{Liu} and his coauthors \cite{DLW} refined this technology to
establish further results, and Liu's refinement in \cite{Liu} will
serve us well here. But since his criterion applies only to closed
manifolds, we have to perform Dehn filling on $N$ to get us
into this situation.

We introduce some notation. Assume that $N$ has $t$ boundary
components all of which are incompressible tori.  For a choice of slopes
$r_1, \ldots ,r_t$, denote by $N(r_1,\ldots r_t)$ the manifold obtained by
performing $r_i$-Dehn filling on the torus $T_i$ for $i=1,\dots,t$.

\begin{lemma}
\label{surger}
Let $N$ and $p:S_1\to S$ be as above. Then:
\begin{enumerate}
\item  There exist infinitely many collections of slopes $(r_1,\ldots r_t)$
so that the Dehn fillings $N(r_1,\ldots ,r_t)$ of $N$
are irreducible and the image of 
$S$ in $N(r_1,\ldots ,r_t)$ remains incompressible (and embedded). 
\item There exists a finite cover $q:N'\rightarrow N(r_1,\ldots ,r_t)$ and an embedding $\iota:S_1\to N'$
such that $q\circ \iota =p$.
\end{enumerate}
\end{lemma}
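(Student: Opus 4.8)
\textbf{Proof proposal for Lemma \ref{surger}.}

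The plan is to establish (1) by a standard application of the Cyclic Surgery Theorem \cite{CGLS} together with an incompressibility-preservation argument, and then to obtain (2) by invoking Liu's refinement \cite{Liu} of the Przytycki--Wise machinery on the closed manifold produced in (1). For part (1), I would first note that $S$ is separating or non-separating in $N$; in either case $N\ssm S$ has finitely many pieces, and because $S$ is incompressible and $N$ irreducible, $S$ remains $\pi_1$-injective in $N(r_1,\dots,r_t)$ provided no filling slope bounds a compressing disk that could be tubed onto $S$. The key point is that for all but finitely many slopes $r_i$ on each $T_i$, the filled manifold $N(r_1,\dots,r_t)$ is irreducible with infinite $\pi_1$: this follows from the fact that $N$ is not a solid torus or $T^2\times I$ (it contains the closed incompressible $S$), so each $T_i$ is not ``small'' in the relevant sense, and only finitely many fillings of any one torus can fail to be irreducible (by the Cyclic/Finite Surgery results, exceptional slopes are bounded in number). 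To see that $S$ stays incompressible one argues component-by-component: a compressing disk for $S$ in the filled manifold could be isotoped off all the filling solid tori (if it met a meridian disk essentially, an innermost-disk/outermost-arc argument on the intersection pattern, using incompressibility of the $T_i$ in $N$, reduces the intersection), hence would give a compressing disk in $N$, contradiction. Embeddedness is preserved since filling is an identity on a neighbourhood of $S$. Choosing infinitely many multislopes avoiding the finitely many bad slopes on each $T_i$ gives (1).

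For part (2), fix one such filling $\bar N = N(r_1,\dots,r_t)$; by (1) it is a closed irreducible $3$-manifold containing the embedded closed incompressible surface $\bar S$ (the image of $S$), and $\pi_1 S \hookrightarrow \pi_1\bar N$. I would invoke Liu's theorem \cite{Liu} (the refinement of \cite{PW}, \cite{PW2}, \cite{PW1} valid for closed manifolds): it provides a ``spirality''/separability conclusion guaranteeing not merely that $\pi_1\bar S$ is separable in $\pi_1\bar N$, but that $\pi_1\bar N$ induces the full profinite topology on $\pi_1 \bar S$ — equivalently, every finite-index subgroup of $\pi_1\bar S$ is closed in $\pi_1\bar N$. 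Applying this to the finite-index subgroup $p_*\pi_1 S_1 < \pi_1 S = \pi_1\bar S$ produces a finite-index subgroup $G<\pi_1\bar N$ with $G\cap \pi_1\bar S = p_*\pi_1 S_1$. The covering $q:N'\to\bar N$ corresponding to $G$ then has the property that $q^{-1}(\bar S)$ has a connected component $S'$ with $\pi_1 S' = p_*\pi_1 S_1$, so the restriction $q|_{S'}$ is the covering $p$; moreover $S'$ is embedded in $N'$ because $\bar S$ is embedded in $\bar N$ and embeddedness lifts to covers. Finally, since the filling solid tori in $\bar N$ sit away from $\bar S$, the preimage in $N'$ of $\bar N \ssm (\text{solid tori}) = N$ is a finite cover of $N$ containing $S' \cong S_1$ as an embedded component of the preimage of $S$, and restricting $q$ to this gives the required $q:N'\to N(r_1,\dots,r_t)$ with $q\circ\iota = p$ after relabelling.

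The main obstacle is part (2): one must check carefully that Liu's criterion, as stated for \emph{closed} aspherical (or merely irreducible with infinite $\pi_1$) manifolds and a single embedded incompressible surface, yields the \emph{full profinite topology} statement and not just separability of $\pi_1 S$ itself — this is precisely the strengthening flagged in \S 2.1, and it is the reason we pass through a closed manifold rather than working directly on $N$. One should confirm that the surface $\bar S$ still satisfies Liu's hypotheses after filling (geometric/virtually-special structure on $\bar N$ is needed; this comes from Agol \cite{Ag} and Wise \cite{Wi}, applied to the hyperbolic or graph-manifold pieces of $\bar N$, which are again available for all but finitely many fillings — so in (1) one additionally discards slopes that destroy hyperbolicity of hyperbolic JSJ pieces, using Thurston's Hyperbolic Dehn Surgery, still leaving infinitely many good multislopes). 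A secondary technical point is the innermost-disk argument guaranteeing $S$ survives filling incompressibly; this is routine but must be done relative to all $t$ filling tori simultaneously.
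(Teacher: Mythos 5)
Your overall route (fill the boundary tori to get a closed manifold, using \cite{CGLS} and finiteness of bad slopes, then invoke Liu \cite{Liu}) is the same as the paper's, but there are two genuine problems. In part (1), your mechanism for incompressibility is wrong: the claim that a compressing disc for $S$ in $N(r_1,\ldots,r_t)$ can always be isotoped off the filling solid tori by an innermost-disc/outermost-arc argument would show that incompressibility survives \emph{every} filling, which is false (fillings along boundary slopes, or along slopes cobounding annuli with curves on $S$, can and do compress closed essential surfaces; this is exactly why the machinery of \cite{CGLS} exists). What the cut-and-paste argument actually produces is a planar surface in $N$ with boundary on $S\cup\partial N$, not a compressing disc for $S$ in $N$. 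The correct argument, which you gesture at but do not use, is to apply Theorems 2.4.2 and 2.4.3 of \cite{CGLS} torus by torus, choosing each $r_i$ at distance greater than $2$ from every slope whose filling compresses $S$ and from every slope cobounding an annulus with an essential curve on $S$; irreducibility is then arranged via \cite{GL} on the first $t-1$ tori and by avoiding the finitely many boundary slopes on the last one.

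In part (2) the key idea is missing. Liu's Theorem 1.1 does not state that $\pi_1$ of the closed filled manifold induces the full profinite topology on $\pi_1(S)$ — that statement is the \emph{output} of this lemma (recorded as a separate theorem in the paper), not something you may quote; Theorem 1.1 only characterizes virtually essentially embedded immersed surfaces as those that are aspiral in the almost fibre part. The step you flag as ``check carefully'' is precisely the technical heart: one must apply Liu's criterion to the immersed surface $S_1\to S\subset N(r_1,\ldots,r_t)$ and prove that $S_1$ is aspiral. This is done by naturality of the spirality character: the almost fibre part of $S_1$ is the preimage of that of $S$, and the spirality character of $S_1$ is the pullback of that of $S$ (Proposition 4.1 and Formula 4.5 of \cite{Liu}), hence takes only the values $\pm 1$ because $S$, being embedded, is aspiral. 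Your proposal instead posits a full-profinite-topology statement and a finite-index subgroup $G<\pi_1(N(r_1,\ldots,r_t))$ with $G\cap\pi_1(S)=p_*\pi_1(S_1)$ without any argument, so the lemma is not proved as written. (Your additional stipulation that the fillings preserve hyperbolicity of JSJ pieces is unnecessary: Liu's criterion applies to all closed orientable irreducible 3-manifolds via their JSJ decomposition, so no appeal to hyperbolic Dehn surgery is needed.)
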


\noindent{\bf{Proof of Proposition \ref{l:key}.}}
Since $S\subset N(r_1,\ldots ,r_t)$ lies in the
complement of the filling core curves of the filled tori, $S_1\subset N'$ lies in the complement
of the preimage of these curves, so by deleting them
we get the desired finite cover $N_1\rightarrow N$.
\qed\\[\baselineskip]
\noindent{\bf Proof of Lemma \ref{surger}:}~ 
For part (1), incompressibility can be deduced from
\cite{CGLS} Theorems 2.4.2 and 2.4.3, as we shall now explain.  

One possibility in \cite{CGLS} is that $N$ is homeomorphic to
$T^2\times I$. 
But we can exclude this possibility because our $N$ contains a closed incompressible surface 
that is not boundary parallel whereas $T^2\times I$ contains no such surface

We now proceed to apply \cite{CGLS} one torus at a time. 
%%--Bob Latchford... that's all we're asking of you!!
 To ensure that $S$ remains incompressible upon $r_1$-Dehn filling on
$T_1$, we simply arrange by \cite{CGLS} to choose (from infinitely
many possibilities) a slope $r_1$ that has large distance (i.e. $>2$)
from any slope for which filling compresses $S$, as well as from any
slope that co-bounds an annulus with some essential simple closed curve
on $S$.  Repeating this for each torus $T_i$ in turn ensures that $S$ remains
incompressible in $N(r_1,\ldots ,r_t)$.

To ensure that $N(r_1,\ldots ,r_t)$ is irreducible, we make the
additional stipulation (possible by \cite{GL}) that for $i=1,\ldots ,t-1$ each
$r_i$-Dehn filling avoids the three possible slopes that result in a
reducible manifold upon Dehn filling the torus $T_i$. (Note that
\cite{GL} is stated only for manifolds with a single torus boundary
component but the proof and result still apply to our context.) 
Now, for the final torus $T_t$, we simply choose $r_t$-Dehn
filling so as to avoid the finite number of boundary slopes, and hence
we avoid any essential embedded planar surface that could give rise to
a reducing sphere in $N(r_1,\ldots ,r_t)$.

For the proof of (2), we need to recall some of Liu's terminology
\cite{Liu}. Theorem 1.1 of \cite{Liu} includes the statement:

\smallskip

{\em A surface $\Sigma$ is aspiral in the almost fibre part if and only if $S$ is virtually
essentially embedded.}

\smallskip

\noindent Since $S$ is embedded and incompressible in $N(r)$, it is
aspiral. Assertion (2) of the lemma is equivalent to the statement
that $S_1$ is virtually essentially embedded, so what we must argue is
that $S_1$ is aspiral.

Let $M$ be a closed orientable 3-manifold and $\Sigma$ a closed essential
surface (of genus $\geq 1$) {\em{immersed}} into $M$. The JSJ decomposition of $M$
induces a canonical decomposition of $\Sigma$. Following
\cite{Liu}, the {\em almost fibre part} of $\Sigma$, denoted $\Phi(\Sigma)$ is
the union of all its horizonal subsurfaces in the Seifert manifold pieces, together 
with the geometrically infinite (i.e.~virtual fibre) subsurfaces
in the hyperbolic pieces. Note that, by definition,
for any finite covering $\Sigma_1\rightarrow \Sigma$,
the preimage of the almost fibre part $\Phi(\Sigma)$ is $\Phi(\Sigma_1)$.

To define aspiral we recall Liu's construction
of the {\em spirality character}  \cite{Liu}.
Given a principal ${\Q}^*$-bundle $P$ over $\Phi(\Sigma)$, the
spirality character of $P$, denoted $s(P)$, is the element of 
$H^1(\Phi(S),{\Q}^*)$ constructed as follows.  For any closed
loop $\alpha:[0,1]\rightarrow \Phi(\Sigma)$ based at $x_0\in \Phi(\Sigma)$,
 each choice of lift for $x_0$ determines a lift $\tilde{\alpha}:[0,1]\rightarrow P$,
and a ratio $\tilde{\alpha}(1)/\tilde{\alpha}(0)\in {\Q}^*$.
This ratio depends only on $[\alpha ]\in H_1(\Phi(\Sigma),{\Z})$ and so induces
a homomorphism $s(P): H_1(\Phi(\Sigma),{\Z})\rightarrow {\Q}^*$; i.e.
an element of $H^1(\Phi(\Sigma),{\Q}^*)$.  The principal bundle $P$ is called 
{\em aspiral} if $s(P)$ takes only the values $\pm 1$ for all 
$[\alpha ]\in H_1(\Phi(\Sigma),{\Z})$. The proof of Theorem 1.1
of \cite{Liu} involves the construction of a particular
principal ${\Q}^*$-bundle $\mathcal{H}$
(see Proposition 4.1 of \cite{Liu}). 
Then, $\Sigma$ is defined to be {\em{aspiral in the almost fibre part}} if 
the bundle $\mathcal{H}$ is aspiral.

For brevity, set $s=s(\mathcal{H})$. To see that aspirality passes to finite
covers of $\Sigma$, one argues as follows. 
First, as above, we note that for a finite covering
$\Sigma_1\rightarrow\Sigma$, the almost fibre part $\Phi(\Sigma_1)$ is
the inverse image of $\Phi(\Sigma)$.  Next we observe that the
naturality established in the proof of \cite[Proposition 4.1]{Liu}
(more specifically Formula 4.5 and the paragraph following it), shows
that $s$ pulls back to the spirality character $s_1$ of
$\Phi(\Sigma_1)$. Indeed the bundle $\mathcal{H}$ in the definition of
$s$, which is independent of the data chosen in the construction,
builds-in the data for all finite covers of $\Sigma$; see \S 4.2 of
\cite{Liu}.

Thus, in our setting,  $S_1$ is aspiral in $\Phi(S_1)$ 
as required.\qed\\[\baselineskip]
For emphasis, we repeat the key fact that we have extracted from
\cite{PW2} and \cite{Liu}.

\begin{theorem} If $M$ is a closed orientable 3-manifold and $S\subset M$ is a closed embedded incompressible
surface, then $\pi_1(M)$ induces the full profinite topology on $\pi_1(S)$.
\end{theorem}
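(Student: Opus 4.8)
The plan is to deduce this statement from the combination of Przytycki--Wise separability and Liu's aspirality criterion, essentially by extracting and reassembling the argument that proves Proposition~\ref{l:key} in the closed case. First I would reduce to the orientable setting: given a closed embedded incompressible surface $S$ in a closed orientable $3$-manifold $M$ and an arbitrary finite-index subgroup of $\pi_1(S)$, corresponding to a finite-sheeted cover $p:S_1\to S$, it suffices to produce a finite-sheeted cover $q:M_1\to M$ such that $p$ is the restriction of $q$ to a component of $q^{-1}(S)$; this is exactly the separability statement ``every finite-index subgroup of $\pi_1(S)$ is closed in $\pi_1(M)$'' translated into covering-space language, by the argument recalled in \S2.1 (a closed subgroup that contains a finite-index subgroup as a finite union of cosets, each separated by a finite-index subgroup of the ambient group, yields the full profinite topology).

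Next I would invoke Liu's Theorem~1.1 directly, since $M$ is already closed: $S$ is virtually essentially embedded if and only if $S$ is aspiral in the almost fibre part. Because $S$ is embedded and incompressible in $M$, the surface $S$ is (virtually) essentially embedded in the strong sense required, so $S$ is aspiral; that is, the spirality character $s=s(\mathcal{H})\in H^1(\Phi(S),\Q^*)$ takes only the values $\pm1$. The crucial point is the naturality of the bundle $\mathcal{H}$ under finite covers: for the cover $S_1\to S$ one has $\Phi(S_1)=$ the preimage of $\Phi(S)$, and the spirality character $s_1$ of $S_1$ is the pullback of $s$ along the induced map on first homology (Formula~4.5 of \cite{Liu} and the discussion in \S4.2 there). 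Since pullback of a $\pm1$-valued character is again $\pm1$-valued, $S_1$ is aspiral, hence virtually essentially embedded, which gives a finite cover $M_1\to M$ in which (a lift of) $S_1$ embeds and covers $S$ with the prescribed covering $p$. This is precisely the conclusion needed.

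The main obstacle, and the step requiring the most care, is the passage through the non-orientable and non-embedded technicalities and the verification that Liu's hypotheses are met in exactly the form stated: one must ensure that ``incompressible and embedded'' is enough to conclude aspirality of $\mathcal{H}$ (so that the forward direction of Liu's biconditional applies), and one must verify that the finite cover $M_1$ produced by ``virtually essentially embedded'' can be chosen so that the relevant component of the preimage of $S$ is genuinely $S_1$ rather than some intermediate cover --- this is handled by passing to a further finite cover of $S_1$ if necessary and using that full profinite topology, once established for all finite-index subgroups, descends along intermediate covers. The remaining ingredients (naturality of $\mathcal{H}$, behaviour of $\Phi$ under covers) are quoted directly from \cite{Liu}, so no new argument is needed there; the work is entirely in assembling these quoted facts correctly.
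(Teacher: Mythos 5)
Your proposal is correct and follows essentially the same route as the paper: the covering-space reformulation of ``full profinite topology'' as in \S 2.1, then Liu's Theorem~1.1 applied directly to the closed manifold ($S$ embedded and incompressible, hence aspiral), together with the naturality of the spirality character under finite covers (Formula~4.5 and \S 4.2 of \cite{Liu}) to conclude that $S_1$ is aspiral and hence virtually essentially embedded. The only difference is that the paper reaches the closed case via Dehn filling because its ambient manifold has boundary, a step your statement does not require.
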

 
\section{Profinite rigidity for the figure-knot complement}

As before, let $\K$ be the figure-eight knot and let $\Ge = \pi_1(\Sh^3\ssm\K)$.

\begin{theorem}\label{t:main}
Let $M$ be a compact connected 3-manifold. If $\wh{\pi_1(M)}=\wh{\Ge}$, 
then $M$ is homeomorphic to $\Sh^3\ssm\K$.
\end{theorem}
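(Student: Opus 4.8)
The plan is to extract Theorem \ref{t:main} from Theorem \ref{t:fibres} and Proposition \ref{p:8unique}, and then to convert the resulting isomorphism of fundamental groups into a homeomorphism using the rigidity of hyperbolic $3$--manifolds. First I would record that $H_1(M,\Z)\cong H_1(\Ge,\Z)\cong\Z$ by Lemma \ref{l:abel}, so $b_1(M)=1$. Writing $\Ge\cong F_2\rtimes_\phi\Z$ with $\phi=\begin{pmatrix}2&1\\1&1\end{pmatrix}$, the hypotheses of Theorem \ref{t:fibres} are met, so $M$ has non-empty boundary, fibres over the circle with compact fibre $F$, and $\pi_1(M)\cong F_2\rtimes_\psi\Z$ for some $\psi\in\out(F_2)$. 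Since $\wh{\pi_1(M)}\cong\wh\Ge$ by hypothesis, Proposition \ref{p:8unique} now forces $\pi_1(M)\cong\Ge$ (with $\psi$ conjugate to $\phi$ in $\out(F_2)=\GL(2,\Z)$, though I will not need this refinement).

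It remains to promote the abstract isomorphism $\pi_1(M)\cong\Ge=\pi_1(\Sh^3\ssm\K)$ to a homeomorphism $M\cong\Sh^3\ssm\K$. Because $M$ fibres over $S^1$ with fibre a compact surface $F$ whose fundamental group is free and non-abelian, its universal cover is $\widetilde F\times\R\cong\R^3$; hence $M$ is aspherical, and therefore irreducible, with $\partial M\neq\emptyset$. Now $\Ge$ is the fundamental group of the one-cusped finite-volume hyperbolic manifold $\Sh^3\ssm\K$, so it is freely indecomposable, is not virtually soluble, is not a Seifert fibred $3$--manifold group, and every $\Z^2\le\Ge$ is conjugate into its unique peripheral subgroup. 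It follows that $M$ is atoroidal and not Seifert fibred, so by geometrisation $\mathrm{int}(M)$ carries a complete hyperbolic metric of finite volume. Thus $\mathrm{int}(M)$ and $\Sh^3\ssm\K$ are finite-volume hyperbolic $3$--manifolds with isomorphic fundamental groups, and Mostow--Prasad rigidity supplies an isometry between them; since a finite-volume hyperbolic $3$--manifold has a compact core unique up to homeomorphism, $M\cong\Sh^3\ssm\K$ (and, a posteriori, $M$ is orientable). One can reach the same conclusion by other routes: once one knows $M$ is Haken with $\pi_1(M)\cong\Ge$, the peripheral structure is forced to match because $\Ge$ has a single conjugacy class of rank-two free abelian subgroups, so Waldhausen's rigidity theorem applies directly; alternatively, one may first identify the fibre $F$ as the once-punctured torus (the pair of pants and the non-orientable surfaces of Euler characteristic $-1$ being excluded because their mapping classes have finite image in $\GL(2,\Z)=\out(F_2)$, whereas $\psi$ conjugates to the infinite-order matrix $\phi$) and then classify the bundle by its monodromy.

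Essentially all the difficulty is already absorbed into Theorems \ref{t:fibres} and \ref{tt:freedmen} and Proposition \ref{p:8unique}. Within the present argument the only step requiring care is the last one, and there the sole thing to check is that $\pi_1(M)\cong\Ge$ forces $M$ to be atoroidal and not Seifert fibred --- immediate from the fact that $\Ge$ is the fundamental group of a one-cusped finite-volume hyperbolic $3$--manifold --- after which hyperbolisation and Mostow rigidity (or Waldhausen's theorem) do the rest.
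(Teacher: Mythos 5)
Your reduction to $\pi_1(M)\cong\Ge$ is exactly the paper's: Lemma \ref{l:abel} gives $b_1(M)=1$, Theorem \ref{t:fibres} gives fibring with $\pi_1(M)\cong F_2\rtimes_\psi\Z$, and Proposition \ref{p:8unique} pins down the group. Where you diverge is the endgame. The paper finishes without any appeal to geometrisation: since $b_1(M)=1$ the fibre group is the kernel of the unique epimorphism to $\Z$, hence free of rank $2$, and among compact surfaces with fundamental group $F_2$ only the once-punctured torus supports an automorphism that is hyperbolic in ${\rm{GL}}(2,\Z)={\rm{Out}}(F_2)$; so $M$ is the punctured-torus bundle with monodromy $\phi$, i.e.\ $\Sh^3\ssm\K$. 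This is precisely your ``alternative route'', and as you state it (excluding the pair of pants and the non-orientable $\chi=-1$ surfaces because their mapping class groups have finite image in ${\rm{GL}}(2,\Z)$) it is complete; what the paper's choice buys is an elementary finish, while your primary route buys independence from the bundle classification at the cost of Thurston's hyperbolisation and Mostow--Prasad rigidity.

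In that primary route, however, the step you call ``immediate'' is the one that needs real work. Knowing that every $\Z^2\le\Ge$ is conjugate into the cusp subgroup $P$ does not by itself show that an embedded incompressible torus $T\subset M$ is boundary-parallel: a priori you do not know that the peripheral subgroups of $M$ are conjugates of $P$ (they are only $\Z^2$'s conjugate into $P$, possibly of finite index), so ``$\pi_1(T)$ is conjugate into $P$'' does not directly give ``$T$ is homotopic into $\partial M$'', let alone $\partial$-parallel. To close this you need an additional argument, e.g.\ that an essential torus would yield a nontrivial splitting of $\Ge$ over $\Z^2$, which is impossible because such a splitting is realised by an essential torus or annulus in the figure-eight exterior, contradicting hyperbolicity; or Waldhausen's theorem that an incompressible torus homotopic into the boundary of an irreducible manifold with incompressible boundary is boundary-parallel, together with control of the peripheral structure of $M$. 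You should also say why $\partial M$ consists of tori rather than Klein bottles ($\Ge$ has no Klein bottle subgroup, as its virtually abelian subgroups are parabolic), which is what makes the hyperbolic structure on ${\rm{int}}(M)$ have finite volume, and acknowledge that orientability is only settled at the end. None of this is fatal, but since your alternative paragraph already reproduces the paper's two-line finish, that is the cleaner way to conclude.
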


\begin{proof}  
Theorem \ref{t:fibres} tells us that $M$ is a fibred manifold with fundamental group of the form $F_r\rtimes\Z$,
and Proposition \ref{p:8unique} then tells us that $r=2$ and  $\pi_1(M)\cong\Ge=F_2\rtimes_\phi\Z$. 
The fundamental group of the
fibre in $M$ is the kernel of the unique map $\pi_1(M)\to\Z$, 
so it is free of rank $2$. The
only compact surface with fundamental group $F_2$ that supports a hyperbolic automorphism is the punctured
torus, so $M$ is the once-punctured torus bundle with holonomy $\phi$, i.e.~the complement
of the figure-eight knot.
\end{proof}

Similarly, using Proposition \ref{p:trefoil-v-FZ} one can show that 
the complement of the trefoil knot and the Gieseking manifold are determined up to homeomorphism by their
fundamental groups.

\section{Related Results}

We have built our narrative around the complement of the figure-eight knot, but our arguments
also establish results for larger classes of manifolds. In this section we record some of these
results. 
We write $M_\phi$ to denote the punctured torus bundle with monodromy $\phi\in{\rm{GL}}(2,\Z)$.  

\begin{theorem}
\label{finitelymany}
Let $M_\phi$ be a once-punctured torus bundle that is hyperbolic.
Then there are at most finitely many compact orientable 3-manifolds $M_1, M_2,
\dots , M_n$ so that $\wh{\pi_1(M_i)} \cong \wh{\pi_1(M_\phi)}$ and
these are all hyperbolic once-punctured torus bundles $M_{\phi_i}$
with $\tr(\phi)=\tr(\phi_i)$ and $\det\phi = \det\phi_i$
\end{theorem}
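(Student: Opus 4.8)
The plan is to deduce Theorem \ref{finitelymany} by combining Theorem \ref{t:main}-style reasoning with the earlier structural results, specifically Theorem \ref{t:fibres}, Proposition \ref{p:unique}, and Proposition \ref{p:finite}. First I would observe that a hyperbolic once-punctured torus bundle $M_\phi$ has $b_1(M_\phi)=1$: indeed $\pi_1(M_\phi)=F_2\rtimes_\phi\Z$ and by Corollary \ref{c:c1} hyperbolicity of $\phi$ forces $b_1(\Gamma_\phi)=1$. So if $M$ is a compact orientable $3$-manifold with $\wh{\pi_1(M)}\cong\wh{\pi_1(M_\phi)}$, then $b_1(M)=1$ by Theorem \ref{summary}(1), and Theorem \ref{t:fibres} applies: $M$ has non-empty boundary, fibres over the circle with compact fibre, and $\pi_1(M)\cong F_r\rtimes_\psi\Z$ for some $\psi\in\mathrm{Out}(F_r)$.

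Next I would pin down $r$ and the surface. Since $\wh{F_r\rtimes_\psi\Z}\cong\wh{F_2\rtimes_\phi\Z}$, Lemma \ref{l:sameHat} gives $\wh{F_r}\cong\wh{F_2}$, and since $b_1$ of a free group is a profinite invariant (Lemma \ref{l:abel}) we get $r=2$. Then $\pi_1(M)=F_2\rtimes_\psi\Z$ with $b_1=1$, so $\psi$ is hyperbolic (again Corollary \ref{c:c1}), and hence the compact fibre must be the once-punctured torus — it is the only compact surface with fundamental group $F_2$ admitting a pseudo-Anosov/hyperbolic monodromy, exactly as in the proof of Theorem \ref{t:main}. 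Therefore $M\cong M_\psi$ for some hyperbolic $\psi\in\mathrm{GL}(2,\Z)$; and since $M_\psi$ is a hyperbolic punctured-torus bundle it is itself hyperbolic (Thurston). This already shows every such $M$ is a hyperbolic once-punctured torus bundle.

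Finiteness and the trace/determinant equalities then follow immediately from Proposition \ref{p:unique} and Proposition \ref{p:finite}: since $\phi$ is hyperbolic and $\wh{\G}_\psi\cong\wh{\G}_\phi$, Proposition \ref{p:unique} forces $\det\psi=\det\phi$ and $\tr\psi=\tr\phi$, while Proposition \ref{p:finite} says there are only finitely many conjugacy classes $[\psi]$ in $\mathrm{GL}(2,\Z)$ with $\wh{\G}_\psi\cong\wh{\G}_\phi$. Since the homeomorphism type of a punctured-torus bundle depends only on the conjugacy class of its monodromy in $\mathrm{GL}(2,\Z)$, this yields finitely many manifolds $M_1,\dots,M_n$, each of the form $M_{\phi_i}$ with $\tr(\phi_i)=\tr(\phi)$ and $\det\phi_i=\det\phi$, completing the proof.

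The main obstacle I anticipate is purely bookkeeping rather than conceptual: one must be careful that the orientability hypothesis on $M$ is compatible with the extraction of $\psi\in\mathrm{GL}(2,\Z)$ (an orientable punctured-torus bundle has $\det\psi=1$, matching $\det\phi=1$ for orientable $M_\phi$), and that the passage ``$\pi_1(M)\cong F_2\rtimes_\psi\Z$ with $\psi$ hyperbolic $\Rightarrow$ $M\cong M_\psi$'' genuinely uses irreducibility and the classification of surfaces with $\pi_1\cong F_2$, exactly the step already carried out in Theorem \ref{t:main}. No step here is as deep as Theorem \ref{t:freedmen}; the theorem is essentially a packaging of the earlier results, and the only real work is verifying that the hyperbolicity of $M_\phi$ is what makes $b_1=1$ and hence makes Theorem \ref{t:fibres} available.
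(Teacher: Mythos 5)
Your proposal is correct and follows essentially the same route as the paper, which proves this theorem simply by re-running the proof of Theorem \ref{t:main} with the finite ambiguity supplied by Propositions \ref{p:unique} and \ref{p:finite} in place of uniqueness. One small repair: $b_1(\G_\psi)=1$ alone does not force $\psi$ to be hyperbolic (parabolics of trace $-2$ and certain elliptics also have $b_1=1$), so instead of citing Corollary \ref{c:c1} at that point you should deduce hyperbolicity of $\psi$ directly from Proposition \ref{p:unique}, which you invoke immediately afterwards in any case.
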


\begin{proof} This follows the proof of Theorem \ref{t:main}, except that 
now we have the finite ambiguity provided by Proposition \ref{p:unique} rather than
uniqueness.
\end{proof}

\begin{theorem}
\label{moregeeneral}
Let $K_1, K_2 \subset S^3$ be knots whose complements are hyperbolic with
$\Sh^3\smallsetminus K_i = {\H}^3/\Gamma_i$.
Assume that $\Sh^3\smallsetminus K_1$ fibres over the circle with fibre a
surface of genus $g$.  If $\widehat{\Gamma}_1\cong 
\widehat{\Gamma}_2$, then $\Sh^3\smallsetminus K_2$ is fibred with fibre a surface
of genus $g$.\end{theorem}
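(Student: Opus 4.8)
The plan is to run the same machinery used for the figure-eight knot, but now tracking the genus of the fibre instead of pinning down the monodromy. Since $\Sh^3\ssm K_1$ fibres over the circle, $\Gamma_1\cong\pi_1(\Sh^3\ssm K_1)$ is of the form $\Sigma_g'\rtimes\Z$, where $\Sigma_g'$ denotes the fundamental group of the once-punctured genus-$g$ surface; this is free of rank $2g$, so $\Gamma_1\cong F_{2g}\rtimes\Z$. Because $K_1$ is a knot, $b_1(\Gamma_1)=1$, and by Lemma \ref{l:abel} (or Theorem \ref{summary}(1)) we get $b_1(\Gamma_2)=1$ as well. Applying Theorem \ref{t:fibres} to $M=\Sh^3\ssm K_2$ (whose fundamental group is $\Gamma_2$, with $\wh{\Gamma}_2\cong\wh{\Gamma}_1\cong\wh{F_{2g}\rtimes\Z}$), we conclude that $\Sh^3\ssm K_2$ has non-empty boundary, fibres over the circle with compact fibre, and $\Gamma_2\cong F_s\rtimes_\psi\Z$ for some $\psi\in\out(F_s)$. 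The fibre is the surface $\Sigma$ with $\pi_1\Sigma=F_s$, and since the fibre of a fibred knot complement is an orientable surface with one boundary component (as $K_2$ is a knot in $\Sh^3$), it is the once-punctured genus-$h$ surface with $s=2h$.

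It remains to show $h=g$, i.e. $s=2g$, i.e. $F_s\cong F_{2g}$. This is where I would invoke Proposition \ref{dense} together with Gaboriau's vanishing result (Proposition \ref{p:gab}): one computes the deficiency-type invariant that is visible profinitely. Concretely, the fibre group $F_s$ is the closure-kernel in $\wh{\Gamma}_2$ of the canonical map $\wh{\Gamma}_2\to\wh{\Z}$, and likewise $F_{2g}$ for $\Gamma_1$; fixing an identification $\wh{\Gamma}_1=\wh{\Gamma}_2$, Lemma \ref{l:sameHat} gives $\wh{F_{2g}}\cong\wh{F_s}$. Two free groups with isomorphic profinite completions have the same rank — detected already by abelianisation, since $H_1(F_r,\Z)=\Z^r$ is recovered from $\wh{F_r}$ via the finite abelian quotients (Lemma \ref{l:abel}). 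Hence $s=2g$, so $h=g$, and $\Sh^3\ssm K_2$ is fibred with fibre a once-punctured surface of genus $g$.

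The only genuinely delicate point is confirming that Theorem \ref{t:fibres} applies: it requires $b_1(\Sh^3\ssm K_2)=1$, which holds because $K_2$ is a knot, and it requires $\wh{\pi_1(\Sh^3\ssm K_2)}\cong\wh{F_r\rtimes\Z}$ for some $r\ge 2$, which holds with $r=2g\ge 2$ since a hyperbolic fibred knot complement cannot have genus-zero fibre (a genus-zero once-punctured surface is a disc, giving an unknot, not hyperbolic; and $g\ge 1$ forces $2g\ge 2$). Everything else is bookkeeping: translating "fibred knot complement" into "once-punctured surface bundle" uses only that the fibre of a fibred knot in $\Sh^3$ is a Seifert surface, hence connected orientable with a single boundary circle, so its genus is recovered from the rank of its free fundamental group. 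I do not expect to need Proposition \ref{p:unique} here, since we are not claiming the monodromies agree — only the genus — and that is an invariant of the fibre group's rank, which is the crudest profinite invariant available.
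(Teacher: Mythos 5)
Your proposal is correct and takes essentially the same route as the paper: apply Theorem \ref{t:fibres} to conclude that $\Sh^3\smallsetminus K_2$ fibres, use Lemma \ref{l:sameHat} (plus abelianisation, Lemma \ref{l:abel}) to see the fibre groups are free of the same rank, and then note that the fibre of a fibred knot complement is a Seifert surface with one boundary component, so equal rank forces equal genus. The brief detour suggesting Proposition \ref{dense} and Gaboriau's theorem is unnecessary --- the rank/abelianisation argument you actually give is exactly what the paper uses.
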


\begin{proof} From Theorem \ref{t:fibres} we know that $\Sh^3\smallsetminus K_2$ is fibred 
and from Lemma \ref{l:sameHat} we know that the fibres have the same euler characteristic. 
 That the genus is the same now follows, since in both cases the fibre is a Seifert surface
and so the surface has a single boundary component, hence
the same genus.
\end{proof}

\begin{theorem}
Let $M$ and $N$ be compact orientable  3-manifolds with $\wh{\pi_1(N)}\cong\wh{\pi_1(M)}$.
Suppose that $\partial M$ is an incompressible torus and that $b_1(M)=1$. If
$M$ fibres over the circle with fibre a surface of euler characteristic $\chi$,
then $N$ fibres over the circle with fibre a surface of euler characteristic $\chi$,
and $\partial N$ is a torus.
\end{theorem}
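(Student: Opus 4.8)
The plan is to reduce this statement to the already-established machinery of Theorems \ref{t:fibres} and \ref{moregeeneral} together with the basic half-lives-half-dies bookkeeping. First I would invoke Theorem \ref{t:fibres}: since $b_1(M)=1$, $M$ is fibred, and we are in the hypotheses of that theorem (an orientable compact $3$-manifold with incompressible torus boundary is in particular irreducible with incompressible toral boundary). So $\wh{\pi_1(N)}\cong\wh{\pi_1(M)}\cong\wh{F_r\rtimes\Z}$, and Theorem \ref{t:fibres} gives that $N$ has non-empty boundary, fibres over the circle with compact fibre, and $\pi_1(N)\cong F_s\rtimes_\psi\Z$ for some $s$ and some $\psi\in{\rm{Out}}(F_s)$. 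By Lemma \ref{l:abel}, $b_1(N)=b_1(M)=1$.

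Next I would pin down the boundary of $N$. By Corollary \ref{c:notClosed}, since $N$ is orientable with $\wh{\pi_1(N)}\cong\wh{F_s\rtimes\Z}$, $N$ is irreducible and $\partial N$ is a union of $t$ incompressible tori with $1\le t\le b_1(F_s\rtimes\Z)=b_1(N)=1$; hence $t=1$ and $\partial N$ is a single incompressible torus. (One should note here that $b_1(F_r\rtimes\Z)=b_1(M)=1$, so the rank of the free group appearing is consistent; this is exactly the situation covered by Corollary \ref{c:notClosed}.) This simultaneously confirms that $M$, which we are told has $\partial M$ an incompressible torus, fits the same template, so $\pi_1(M)=F_r\rtimes_\phi\Z$ with the fibre a once-punctured orientable surface and likewise for $N$.

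Finally I would control the euler characteristic of the fibre. The fibre of $M$ is the cover corresponding to the kernel of the unique epimorphism $\pi_1(M)\to\Z$ (unique because $b_1(M)=1$), so $\pi_1(\text{fibre})=F_r$ and $\chi(\text{fibre})=1-r$; similarly the fibre of $N$ has fundamental group $F_s$ with $\chi=1-s$. By Lemma \ref{l:sameHat} (applicable since $b_1(M)=1$ and both groups are finitely-generated-free-by-$\Z$), $\wh{F_r}\cong\wh{F_s}$, and since the rank of a free group is detected by $H_1$ with finite coefficients — equivalently by Lemma \ref{l:abel} applied to the free groups — we get $r=s$, hence the fibres have the same euler characteristic $\chi=1-r$. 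Alternatively one can cite Theorem \ref{moregeeneral}'s euler-characteristic argument verbatim. This last step — matching the rank of the free fibre group — is the only place any real input is needed, and it is immediate from Lemma \ref{l:sameHat} plus the fact that $b_1$ distinguishes free groups of different rank; I do not expect it to be an obstacle. The whole statement is essentially a packaging of Theorems \ref{t:fibres} and \ref{moregeeneral} in the relative ($b_1=1$, torus boundary) setting, and the main (very minor) point to be careful about is verifying that the orientability and incompressibility hypotheses propagate correctly through Corollary \ref{c:notClosed}.
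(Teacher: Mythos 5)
Your argument is correct and is essentially the paper's own proof, which cites exactly Theorem \ref{t:fibres}, Lemma \ref{l:sameHat} and Corollary \ref{c:notClosed}; your explicit bookkeeping of the fibre rank and Euler characteristic just spells out what the paper leaves implicit. (One harmless slip: an incompressible torus boundary does not by itself force irreducibility of $M$, but nothing in your argument actually needs $M$ irreducible --- Theorem \ref{t:fibres} is applied to $N$, and its hypotheses only require $b_1(N)=1$, which Lemma \ref{l:abel} supplies.)
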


\begin{proof} This follows from Theorem \ref{t:fibres},  Lemma \ref{l:sameHat} and Corollary \ref{c:notClosed}.
\end{proof}
Theorem \ref{summary}(5) shows that the fundamental group
of a torus knot complement cannot have the same profinite completion as
that of a Kleinian group of finite co-volume.  

\begin{theorem}
\label{nottorus}
Let $K_1, K_2 \subset S^3$ be knots whose complements are geometric.
Let $\Gamma_i=\pi_1(S^3\smallsetminus K_i)$ and assume that 
$\widehat{\Gamma}_1\cong\widehat{\Gamma}_2$. Then:
\begin{itemize}
\item $K_1$ is hyperbolic if and only if $K_2$ is hyperbolic.
\item $K_1$ is fibred if and only if $K_2$ is fibred.
\end{itemize}
\end{theorem}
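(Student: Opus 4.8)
The plan is to combine the classification of geometric knot complements with the profinite invariants already established. The knots $K_i$ have geometric complements, so each $\Sh^3\smallsetminus K_i$ is either hyperbolic or Seifert fibred; by the torus theorem the only Seifert fibred knot complements are those of torus knots (including the unknot, whose complement is a solid torus, excluded here since $\G_i$ is non-abelian). So I must show that $\wh\G_1\cong\wh\G_2$ forces both to be hyperbolic or both to be torus-knot groups. First I would invoke Theorem \ref{summary}(5): the fundamental group of a torus knot complement is Seifert fibred, and a hyperbolic knot group is the fundamental group of a non-compact finite-volume hyperbolic $3$-manifold, so these two classes have non-isomorphic profinite completions. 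Hence $K_1$ is hyperbolic if and only if $K_2$ is hyperbolic, which is the first bullet.

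For the second bullet, treat the two cases separately. If $K_1$ (equivalently $K_2$) is a torus knot, then both complements are Seifert fibred; every torus knot complement fibres over the circle (the fibration is well known, e.g. the Milnor fibration of the singularity), so both $K_1$ and $K_2$ are fibred and there is nothing to prove. If $K_1$ is hyperbolic, then so is $K_2$, and I would apply Corollary \ref{profinite_invt_fibered} (or directly Theorem \ref{t:fibres}): since a knot complement has $b_1=1$ and incompressible torus boundary, if $\Sh^3\smallsetminus K_1$ fibres over the circle then so does $\Sh^3\smallsetminus K_2$, and conversely by symmetry of the hypothesis $\wh\G_1\cong\wh\G_2$. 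This gives the second bullet.

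The only point requiring a little care is the reduction to the two cases and the verification that torus knot complements fibre; both are classical and I would simply cite them. The genuine content is entirely imported: Theorem \ref{summary}(5) for the hyperbolic-versus-Seifert dichotomy and Theorem \ref{t:fibres}/Corollary \ref{profinite_invt_fibered} for the fibering statement in the hyperbolic case. I do not anticipate a serious obstacle here; the proof is a short deduction, and the main thing to get right is phrasing it so that the symmetric roles of $K_1$ and $K_2$ are used correctly when passing between "$K_1$ fibred $\Rightarrow K_2$ fibred" and its converse.

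\begin{proof} Since the complements are geometric, each $\Sh^3\smallsetminus K_i$ is either hyperbolic or Seifert fibred; as $\G_i$ is non-abelian, in the Seifert case $K_i$ is a non-trivial torus knot. By Theorem \ref{summary}(5), the profinite completion of a Seifert-fibred knot group is not isomorphic to that of the fundamental group of a non-compact finite-volume hyperbolic $3$-manifold. Hence $\wh\G_1\cong\wh\G_2$ forces $K_1$ and $K_2$ to be of the same type, proving the first bullet. For the second bullet: if both $K_i$ are torus knots then both complements fibre over the circle (the standard fibration of a torus knot complement), so both are fibred. If both $K_i$ are hyperbolic, then each $\Sh^3\smallsetminus K_i$ is a compact orientable $3$-manifold with incompressible torus boundary and $b_1=1$; if $K_1$ is fibred then Corollary \ref{profinite_invt_fibered} (applied via the isomorphism $\wh{\pi_1(\Sh^3\smallsetminus K_1)}\cong\wh{\pi_1(\Sh^3\smallsetminus K_2)}$) shows $\Sh^3\smallsetminus K_2$ fibres over the circle, i.e.\ $K_2$ is fibred, and the converse follows by exchanging the roles of $K_1$ and $K_2$. \end{proof}
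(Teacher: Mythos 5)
Your proof is correct and follows essentially the same route as the paper, which leaves this theorem without a written proof precisely because it is the short deduction you give: Theorem \ref{summary}(5) separates the Seifert-fibred (torus-knot) case from the hyperbolic case, and Theorem \ref{t:fibres}/Corollary \ref{profinite_invt_fibered} (plus the classical fibering of torus knot complements) handles fibredness. The only cosmetic point is your aside that the unknot is "excluded since $\G_i$ is non-abelian" — the hypotheses do not exclude it, but that case is harmless since $\wh{\Z}$ is not the profinite completion of any non-trivial knot group and the unknot is fibred and non-hyperbolic anyway.
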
 

The following is simply a summary of earlier results.

\begin{thmA}\label{t:compile} Let $N$ and $M$ be compact connected 3-manifolds. Assume
 $b_1(N)=1$ and $\pi_1(N)\cong F_r\rtimes_\phi\Z$. If $\wh{\pi_1(N)}\cong
\wh{\pi_1(M)}$, then $b_1(M)=1$ and
\begin{enumerate}
\item $\pi_1M\cong F_r\rtimes_\psi\Z$, for some $\psi\in{\rm{Out}}(F_r)$, 
\item $M$ is fibred, and
\item $\partial M$ is either a torus or a Klein bottle.
\end{enumerate}
Moreover, when $r=2$, 
\begin{enumerate}
\item[(4)] for each $N$ there are only finitely many
possibilities for $M$,  
\item[(5)] $M$ is hyperbolic if and only if $N$ is hyperbolic, and
\item[(6)] if $M$ is hyperbolic, it is a once-punctured torus bundle.
\end{enumerate}
\end{thmA}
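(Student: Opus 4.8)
The plan is to assemble Theorem \ref{t:compile} as a bookkeeping exercise, invoking the major results proved earlier in the paper. The starting data are: $N$ compact connected with $b_1(N)=1$ and $\pi_1(N)\cong\G:=F_r\rtimes_\phi\Z$, together with a compact connected $M$ with $\wh{\pi_1(M)}\cong\wh{\pi_1(N)}=\wh{\G}$. First I would record that $b_1(M)=1$: this is immediate from Theorem \ref{summary}(1) (equivalently Lemma \ref{l:abel}), since $\wh{\pi_1(M)}\cong\wh{\G}$ forces $H_1(\pi_1(M),\Z)\cong H_1(\G,\Z)$, and $b_1(\G)=1$ by hypothesis.

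Next, the hypothesis $b_1(M)=1$ puts us exactly in the setting of Theorem \ref{t:fibres} (with $\G=F_r\rtimes_\phi\Z$): it tells us that $M$ has non-empty boundary, fibres over the circle with compact fibre, and $\pi_1(M)\cong F_r\rtimes_\psi\Z$ for some $\psi\in{\rm{Out}}(F_r)$ — with the same $r$, since the rank of the fibre group is an invariant of the profinite completion (the fibre group is the kernel of the unique map to $\Z$, whose closure sits in the short exact sequence ($\dagger$), so its profinite completion is determined; alternatively invoke Lemma \ref{l:sameHat} and note that a free profinite group of rank $r$ determines $r$). This gives items (1) and (2). For item (3): by Corollary \ref{c:notClosed}, if $M$ is orientable then $\partial M$ is a union of $t$ incompressible tori with $1\le t\le b_1(\G)=1$, so $\partial M$ is a single torus; if $M$ is non-orientable, the comment following Corollary \ref{c:notClosed} (analysing the orientable double cover, whose boundary consists of tori, via the index-$2$ subgroup structure of $F_r\rtimes_\phi\Z$) forces $\partial M$ to be a single Klein bottle. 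This settles the first block (1)–(3).

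For the case $r=2$: item (4) is Proposition \ref{p:finite} together with Lemma \ref{l:sameHat} — Proposition \ref{p:finite} says there are only finitely many conjugacy classes $[\psi]$ in ${\rm{GL}}(2,\Z)$ with $\wh{\G}_\phi\cong\wh{\G}_\psi$, and each such class determines $\pi_1(M)\cong F_2\rtimes_\psi\Z$ up to isomorphism, which by geometrization (Mostow rigidity in the hyperbolic case, and the standard classification of fibred $3$-manifolds with fibre a compact surface of Euler characteristic $-1$) determines $M$ up to finitely many homeomorphism types. Item (5) is the ``same type'' clause of Proposition \ref{p:finite}: $\phi$ and $\psi$ are simultaneously hyperbolic, parabolic, or elliptic; combined with the observation that the once-punctured torus bundle $M_\psi$ is hyperbolic precisely when $\psi$ is hyperbolic (Thurston), and that Seifert-fibred $M_\psi$ arise exactly in the parabolic and elliptic cases, we get that $M$ is hyperbolic iff $N$ is. Item (6): when $M$ is hyperbolic, by item (5) $N$ is hyperbolic, so $\phi$ is hyperbolic, hence $\psi$ is hyperbolic; the fibre of $M$ is a compact surface with free fundamental group of rank $2$ supporting a pseudo-Anosov (hyperbolic) monodromy, and the only such surface is the once-punctured torus, so $M=M_\psi$ is a once-punctured torus bundle.

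I do not expect any genuine obstacle here: every ingredient has been established earlier in the paper, and the proof is purely a matter of citing Theorem \ref{summary}(1), Theorem \ref{t:fibres}, Corollary \ref{c:notClosed}, Lemma \ref{l:sameHat}, and Proposition \ref{p:finite} in the right order. The only point requiring a word of care is the passage from ``$\pi_1(M)$ determined up to isomorphism'' to ``$M$ determined up to homeomorphism,'' which uses that these are aspherical $3$-manifolds with incompressible boundary (so Waldhausen's rigidity applies) — or, more concretely, that a fibred $3$-manifold is reconstructed from its monodromy; this is already implicit in the proof of Theorem \ref{t:main}, so I would simply mirror that argument.
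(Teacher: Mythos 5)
Your proposal is correct and matches the paper's treatment: the paper presents this theorem as ``simply a summary of earlier results,'' and you assemble exactly the same ingredients (Lemma \ref{l:abel}/Theorem \ref{summary}(1), Theorem \ref{t:fibres}, Corollary \ref{c:notClosed} with the non-orientable remark, Lemma \ref{l:sameHat}, Proposition \ref{p:finite}, and the punctured-torus recognition argument used in Theorems \ref{t:main} and \ref{finitelymany}) in the same order. Your closing remark on passing from an isomorphism of fundamental groups to a homeomorphism of fibred manifolds is handled at the same level of detail as the paper itself, so no gap beyond what the paper also leaves implicit.
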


\end{document}